\documentclass[12pt]{article}
\usepackage{amssymb}
\usepackage{amsmath}
\usepackage{graphicx}

\oddsidemargin  0pt \evensidemargin 0pt \marginparwidth 40pt
\marginparsep 10pt \topmargin -.9in \headsep 10pt \textheight 9.8in
\textwidth 6.75in

\begin{document}
\newtheorem{theorem}{Theorem} 
\newtheorem{tha}{Theorem}
\newtheorem{conjecture}[theorem]{Conjecture}
\renewcommand{\thetha}{\Alph{tha}}

\newtheorem{corollary}[theorem]{Corollary}
\newtheorem{lemma}[theorem]{Lemma}
\newtheorem{proposition}[theorem]{Proposition}
\newtheorem{construction}[theorem]{Construction}
\newtheorem{claim}[theorem]{Claim}
\newtheorem{definition}[theorem]{Definition}
\newtheorem{observation}{Observation}
\newtheorem{question}{Question}
\newtheorem{remark}[theorem]{Remark}
\newtheorem{algorithm}[theorem]{Algorithm}
\newtheorem{example}[theorem]{Example}

\def\endproofbox{\hskip 1.3em\hfill\rule{6pt}{6pt}}
\newenvironment{proof}%
{%
\noindent{\it Proof.}
}%
{%
 \quad\hfill\endproofbox\vspace*{2ex}
}
\def\qed{\hskip 1.3em\hfill\rule{6pt}{6pt}}

\parindent=15pt

\def\dim{{\rm dim}}
\def \e{\epsilon}
\def\vec#1{{\bf #1}}
\def\va{\vec{a}}
\def\vb{\vec{b}}
\def\vc{\vec{c}}
\def\vx{\vec{x}}
\def\vy{\vec{y}}
\def\cM{{\mathcal M}}
\def\cH{{\cal H}}
\def\cF{{\mathcal F}}
\def\bE{\mathbb{E}}
\def\bP{\mathbb{P}}
\def\cA{{\mathcal A}}
\def\cB{{\mathcal B}}
\def\cJ{{\mathcal J}}
\def\cM{{\mathcal M}}
\def\cS{{\mathcal S}}
\def\cT{{\mathcal T}}
\def\wt#1{\widetilde{#1}}
\title{New Lower Bounds for Permutation Arrays Using Contraction}
\author{ Sergey Bereg \thanks{ Computer Science Dept., University of Texas at Dallas,
Richardson, TX 75083, USA} 
\and Zevi Miller \thanks{Dept. of Mathematics, Miami University, Oxford, OH 45056, USA}
\and Luis Gerardo Mojica $^*$ 
\and Linda Morales $^*$
\and I.H. Sudborough$^*$ }

\date{August 28, 2018}
\maketitle

\begin{abstract}
A {\it permutation array} $A$ is a set of permutations on a finite set $\Omega$, say of size $n$.  
Given distinct permutations $\pi, \sigma\in \Omega$, 
we let $hd(\pi, \sigma) = |\{ x\in \Omega: \pi(x) \ne \sigma(x) \}|$, called the {\it Hamming distance} between $\pi$ and $\sigma$.  
Now let $hd(A) =$ min$\{ hd(\pi, \sigma): \pi, \sigma \in A \}$.  For positive integers $n$ and $d$ with $d\le n$, we 
let $M(n,d)$ be the maximum number of permutations in any array $A$ satisfying $hd(A) \geq d$.  There is an extensive literature on 
the function $M(n,d)$, motivated in part by suggested applications to error correcting codes for message transmission over power lines.  

A basic fact is that if a permutation group $G$ is sharply $k$-transitive on a set of size $n\geq k$, then $M(n,n-k+1) = |G|$.  
Motivated by this we consider the permutation groups 
$AGL(1,q)$ and $PGL(2,q)$ acting sharply $2$-transitively on $GF(q)$ 
and sharply $3$-transitively on $GF(q)\cup \{\infty\}$ respectively.  Applying a contraction operation 
to these groups, we obtain the following new lower bounds for prime powers $q$ satisfying $q\equiv 1$ (mod $3$).

\begin{enumerate}
\item
$M(q-1,q-3)\geq (q^{2} - 1)/2$ for $q$ odd, $q\geq 7$,
\item
$M(q-1,q-3)\geq (q-1)(q+2)/3$ for $q$ even, $q\geq 8$,
\item
$M(q,q-3)\geq Kq^{2}\log q$ for some constant $K$ if $q$ is odd, $q\geq 13$.
\end{enumerate}

These results resolve a case left open in a previous paper \cite{BLS}, where it was shown that $M(q-1, q-3) \geq q^{2} - q$ and $M(q,q-3) \geq q^{3} - q$ 
for all prime powers $q$ such that $q\not \equiv 1$ (mod $3$).  We also obtain lower  bounds for $M(n,d)$ for a finite number of exceptional pairs $n,d$, by applying 
this contraction operation to the sharply $4$ and $5$-transitive Mathieu groups. 
\end{abstract} 

\section{Introduction}

\subsection{Notation and General Background}

We consider permutations on a set $\Omega$ of size $n$.  Given two such permutations $\pi$ and $\sigma$, 
we let $hd(\pi, \sigma) = |\{ x\in \Omega: \pi(x) \ne \sigma(x) \}|$, so $hd(\pi, \sigma)$ is the number 
of elements of $\Omega$ at which $\pi$ and $\sigma$ disagree.  When $hd(\pi, \sigma) = d$, we say that $\pi$ 
and $\sigma$ and are at {\it Hamming distance $d$}, or that the {\it Hamming distance between} $\pi$ and $\sigma$ is $d$.  A {\it permutation array} $A$ is a 
set of permutations on $\Omega$.  We say 
that $hd(A) = d$ if $d = min\{ hd(\pi, \sigma): \pi, \sigma \in A \}$.  For positive integers $n$ and $d$ with $d\le n$ we 
let $M(n,d)$ be the maximum number of permutations in any permutation array $A$ satisfying $hd(A) \geq d$.

Consider a fixed ordering $x_{1}, x_{2}, \cdots, x_{n}$ of the elements of $\Omega$.  The {\it image string} of the permutation $\sigma\in A$ is the string 
$\sigma(x_{1})\sigma(x_{2})\cdots\sigma(x_{n})$.  Thus the permutation array $A$ can also be regarded as an $|A|\times n$ matrix whose rows are 
the image strings of the permutations in $A$.  When $hd(A) = d$, any two rows of $A$ disagree in at least $d$ positions and some pair of rows disagree in 
exactly $d$ positions.  In particular, if $G$ is a permutation group acting on  $\Omega$, then we obtain a $|G|\times n$ permutation array whose rows 
consist of the mage strings of all the elements of $G$.  We refer to this array by $G$, and we use $hd(G)$ to refer to the hamming distance of this array.  

The study of permutation arrays began (to our knowledge) with the papers \cite{DeVa} and \cite{FrDe}, where good bounds on $M(n,d)$ (together with other results) were developed based on combinatorial 
methods, motivated by the Gilbert-Varshamov bounds for binary codes.  In recent years there has been renewed interest in permutation arrays, motivated by 
suggested applications in power line transmission \cite{FerVin}, \cite{PVYH}, \cite{Vin}, \cite{H}, 
block ciphers \cite{TorCoLi}, and in multilevel flash memories \cite{JMSB} and \cite{JSB}.

We review here some of the known results and methods for estimating $M(n,d)$.  

Some elementary exact values and bounds on $M(n,d)$ are the following (summarized with short proofs in \cite{CCD}) ; $M(n,2) = n!$, $M(n,3) = \frac{n!}{2}$, 
$M(n,n) = n$, $M(n,d)\geq M(n-1,d)$, $M(n,d)\geq M(n,d+1)$, $M(n,d)\le nM(n-1,d)$, and 
$M(n,d)\le \frac{n!}{(d-1)!}$.  More sophisticated bounds were developed in the above cited papers \cite{DeVa} and \cite{FrDe}, 
with a recent improvement in \cite{WZYG}.   
The smallest interesting case for $d$ is $d=4$.  Here some interesting and non-elementary 
bounds for $M(n,4)$ were developed in \cite{DS}, using linear programming, characters on the symmetric group $S_{n}$, and Young diagrams.  
In \cite{KK} it is shown that if $K>0$ is a constant with $n > e^{30/K^{2}}$ and $s < n^{1-K}$, then $M(n,n-s) \geq \theta(s! \sqrt{\log n})$.  The lower bound is 
achieved by a polynomial time randomized construction, using the Lovasz Local Lemma in the analysis.

There are various construction methods for permutation arrays.  First there is a connection with mutually orthogonal latin squares (MOLS).  It was shown in \cite{CKL} that 
if there are $m$ MOLS of order $n$, then $M(n,n-1) \geq mn$.  From this it follows that if $q$ is a prime power, then $M(q,q-1) = q(q-1)$.  Computational approaches for bounding 
$M(n,d)$ for small $n$ and $d$, including clique search, and the use of automorphisms are described in \cite{CCD}, \cite{JLOS}, and \cite{SM}.  There are also constructions of 
permutation arrays that arise from the use of permutation polynomials, also surveyed in \cite{CCD}, which we mention briefly below.

Additional construction methods are {\it coset search} \cite{BLS} and {\it partition and extension} \cite{BMS}.  In the first of these, one starts with with a permutation group $G$ on $n$ 
letters with $hd(G) = d$, 
and which is a subgroup of some group $H$ (for example $H = S_{n}$).  Now for disjoint permutation arrays $A,B$ on the same set of letters, let 
$hd(A,B) =$ min$\{hd(\sigma, \tau): \sigma\in A,\tau \in B \}$.  For $x\notin G$ we observe that the coset $xG$ of $G$ in $H$ is a permutation array 
with $hd(xG) = hd(G)$.  For cosets $x_{1}G, x_{1}G, \cdots, x_{k}G$ of $G$, the Hamming distance of the permutation array $\cup_{1\le i\le k}x_{i}G$ is the minimum of $d$  
and $m$, where $m =$ min$\{hd(x_{i}G, x_{j}G): 1\le i < j\le k\}$.  The method of coset search is to iteratively find coset representatives $x_{i}$ so that $m$, while in general less than $d$, 
is still reasonably large.  The partition and extension method is a way of obtaining constructive lower bounds $M(n+1, d+1)$ from such bounds for $M(n, d)$.  

Moving closer to the subject of this paper, we consider a class of optimal constructions which arise through sharply transitive groups.  
We say that a permutation array $A$ on a set $\Omega$ of size $n$ is {\it sharply $k$-transitive} on $\Omega$ 
if given any two $k$-tuples $x_{1}, x_{2}, \cdots, x_{k}$ and $y_{1}, y_{2}, \cdots, y_{k}$ of distinct elements of $\Omega$ there exists a unique $g\in A$ such that 
$g(x_{i}) = y_{i}$ for all $1\le i\le k$.  In our applications $A$ will be the set of image strings of a permutation group acting on $\Omega$.  
From the bound $M(n,d)\le \frac{n!}{(d-1)!}$ we have for any positive integer $k$ that 
$M(n, n-k+1)\le \frac{n!}{(n-k)!}$.  Now if $G$ is a sharply $k$-transitive group acting on $\Omega$, then $|G| = \frac{n!}{(n-k)!}$.  Also, in such 
a $G$ any two distinct elements $g,h$ of $G$ can agree in at most $k-1$ positions, since otherwise $gh^{-1}$ is a nonidentity element of $G$ 
fixing at least $k$ elements of $\Omega$, contrary to sharp $k$-transitivity.  Thus $hd(G) \geq n-k+1$.  So a sharply $k$-transitive group $G$ 
implies the existence of an optimal array (the set of image strings of elements of $G$) realizing $M(n,n-k+1) = \frac{n!}{(n-k)!}$.  The following theorem 
gives a strong converse to the above, including the generalization to arbitrary arrays that may not be groups.            

\begin{theorem} \label{Deza} \cite{BCD}  Let $A$ be a permutation array on a set of $n$ letters satisfying $hd(A)\geq n-k+1$.  Then 
$|A| = \frac{n!}{(n-k)!} = M(n, n-k+1) \Longleftrightarrow A$ is sharply $k$-transitive on this set. 
\end{theorem}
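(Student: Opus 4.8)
The plan is to reduce the entire statement to a single bijection between $A$ and the set of ordered $k$-tuples of distinct elements of $\Omega$. First I would restate the distance hypothesis combinatorially: since $hd(A)\geq n-k+1$, any two distinct $\pi,\sigma\in A$ disagree in at least $n-k+1$ positions, hence agree in at most $k-1$ positions. Fixing an ordered $k$-tuple $T=(x_1,\dots,x_k)$ of distinct elements of $\Omega$, consider the restriction map $\rho_T\colon A\to \Omega^k$ given by $\rho_T(\sigma)=(\sigma(x_1),\dots,\sigma(x_k))$. Because each $\sigma$ is a permutation, its image lands in the set $D$ of ordered $k$-tuples of distinct elements, and $|D|=n(n-1)\cdots(n-k+1)=\frac{n!}{(n-k)!}$. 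The whole argument hinges on understanding when $\rho_T$ is a bijection.

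For the forward direction, assume $|A|=\frac{n!}{(n-k)!}$. I would first observe that $\rho_T$ is injective for \emph{every} $T$: if $\rho_T(\pi)=\rho_T(\sigma)$, then $\pi$ and $\sigma$ agree on the $k$ distinct positions $x_1,\dots,x_k$, forcing agreement in at least $k$ positions, which contradicts the at-most-$(k-1)$ bound unless $\pi=\sigma$. Since $|A|=|D|=\frac{n!}{(n-k)!}$, an injection $\rho_T\colon A\to D$ is automatically a bijection. Thus for every ordered source tuple $T=(x_1,\dots,x_k)$ and every ordered target tuple $(y_1,\dots,y_k)$ of distinct elements there is a unique $\sigma\in A$ with $\sigma(x_i)=y_i$ for all $i$; quantifying over all $T$ yields precisely the definition of sharp $k$-transitivity.

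For the reverse direction, assume $A$ is sharply $k$-transitive. Fixing a single source tuple $T$, the definition says $\rho_T$ is a bijection onto $D$, so $|A|=|D|=\frac{n!}{(n-k)!}$. I would then invoke the upper bound $M(n,d)\leq \frac{n!}{(d-1)!}$ from the preamble with $d=n-k+1$, which gives $M(n,n-k+1)\leq \frac{n!}{(n-k)!}$; since $A$ itself is an array with $hd(A)\geq n-k+1$ of exactly this size, it realizes the bound, so $M(n,n-k+1)=\frac{n!}{(n-k)!}=|A|$, completing the equivalence.

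The only subtle point, and the step I would state most carefully, is that sharp $k$-transitivity demands the unique-preimage property for \emph{all} choices of the source tuple $T$, whereas the cardinality argument in the reverse direction uses only one such $T$. The forward direction handles this cleanly: the agreement bound makes $\rho_T$ injective simultaneously for every $T$, so the single cardinality coincidence $|A|=|D|$ upgrades each $\rho_T$ to a bijection at once, rather than for one fixed tuple only. Everything else is elementary counting, so I do not expect any genuine obstacle beyond being explicit that the quantifier over source tuples is covered.
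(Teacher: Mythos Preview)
Your argument is correct, but there is nothing in the paper to compare it against: Theorem~\ref{Deza} is quoted from \cite{BCD} and the paper gives no proof of it. The only related reasoning in the paper is the paragraph immediately preceding the theorem, which sketches the reverse implication in the special case where $A$ is a group, using that $gh^{-1}$ would be a nonidentity element fixing $k$ points. Your proof via the restriction map $\rho_T$ is more general (it makes no use of group structure, as the theorem requires) and handles both directions; it is essentially the standard counting proof one would expect for this result.
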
   

The sharply $k$-transitive groups (for $k\geq 2$) are known, and these are as follows \cite{C}, \cite{DM}, \cite{Rob};

\medskip

\noindent $k = 2:$ the Affine General Linear Group $AGL(1,q)$ acting on the finite field $GF(q)$, consisting of the transformations $\{ x\rightarrow ax+b: x, a\ne 0, b\in GF(q) \}$,

\smallskip

\noindent $k = 3:$ the Projective Linear Group $PGL(2,q)$ acting on $GF(q)\cup \{\infty\}$, consisting of the transformations $\{ x\rightarrow \frac{ax+b}{cx+d}: x, a, b, c, d\in GF(q), ad-bc\ne 0\}$,

\smallskip

\noindent $k = 4:$ the Mathieu group $M_{11}$ acting on a set of size $11$, 

\smallskip

\noindent $k = 5:$ the Mathieu group $M_{12}$ acting on a set of size $12$, 

\smallskip

\noindent arbitrary $k:$ the symmetric group $S_{k}$ acting on a set of size $k$ is sharply $k$ and $(k-1)$-transitive, as well as the alternating group $A_{k}$ acting on a 
set of size $k$ is sharply $(k-2)$-transitive (\cite{Rob}, Theorem 7.1.4). 

In this paper we obtain new lower bounds on $M(n,d)$ for $n$ and $d$ near a prime power.  Previous results of this kind are given in \cite{CCD} where 
it is shown that for $n = 2^{k}$ with $n\not\equiv 1$(mod $3$) we have $M(n,n-3)\geq (n+2)n(n-1)$ and $M(n,n-4)\geq \frac{1}{3}n(n-1)(n^{2}+3n+8)$.  It is also shown 
that for any prime power $n$ with $n\not\equiv 2$(mod $3$) we have $M(n,n-2)\geq n^{2}$.  These results are based on permutation polynomials.  Similar such results appearing in 
\cite{BLS}, are based on a {\it contraction} operation applied to permutation arrays defined in the next section.  The latter results yield 
$M(n-1, n-3)\geq n^{2} - 1$ when $n\not\equiv 1$(mod $3$), and $M(n-2, n-5)\geq n(n-1)$ 
when $n\not\equiv 2$(mod $3$) and $n\not\equiv 0,1$(mod $5$).   

Our goal is to obtain new lower bounds when $q$ is prime power satisfying $q\equiv 1$ (mod $3$); that is, 
to resolve the case left open in \cite{BLS} where the methods of that paper do not apply.  
For such $q$, we accomplish this by applying the contraction operation to the permutation arrays $AGL(1,q)$ and $PGL(2,q)$ (acting on $GF(q)$ and on 
$GF(q)\cup \{\infty\}$ respectively),  
obtaining the following constructive lower bounds. 

\medskip

\noindent \textbf{1.} for $q\geq 7$, $M(q-1,q-3)\geq (q^{2} - 1)/2$ for $q$ odd and $M(q-1,q-3)\geq (q-1)(q+2)/3$ for $q$ even, and

\smallskip

\noindent \textbf{2.} for  $q\geq 13$, $M(q,q-3)\geq Kq^{2}\log q$ for some constant $K$ if $q$ is odd, and

\smallskip

\noindent \textbf{3.} bounds for $M(n,d)$ for a finite number of exceptional pairs $n,d$, obtained from the Mathieu groups.

We will use standard graph theoretic notation.  In particular for a graph $G$ and $S\subseteq V(G)$ we let $[S]_{G}$ be the graph 
with vertex set $S$ and edge set $E([S]_{G}) = \{ xy: x, y \in S, xy\in E(G) \}$, and we call it {\it the graph induced by $S$}.  When $G$ is understood by context, 
we abbreviate $[S]_{G}$ by $[S]$.  

\subsection{Contraction and the Contraction Graph} 

Consider a permutation array $A$ acting on a set $\Omega = \{ x_{1}, x_{2}, \cdots , x_{n}\}$ of size $n$, where the elements of $\Omega$ are 
ordered by their subscripts.  We distinguish some element, say $x_{n}$, by renaming it $F$.  Thus the image string of any element 
$\sigma\in A$ will be $\sigma(x_{1})\sigma(x_{2})\cdots\sigma(F)$, and we say that $\sigma(x_{i})$ occurs in {\it position} or {\it coordinate} $x_{i}$ 
of the string. 
 Now for any 
$\pi\in A$, define the permutation $\pi^{\triangle}$ on $\Omega$ by
 
\bigskip

\[   
\pi^{\triangle}(x) = 
     \begin{cases}
		\pi(F) &\quad\text{if } x = \pi^{-1}(F),\\
	F &\quad\text{if } x = F,\\ 
        \pi(x) &\quad\text{otherwise.}\\   
     \end{cases}
\]

\bigskip

Thus the image string of $\pi^{\triangle}$ is obtained from the image string of $\pi$ by 
interchanging the symbols $F$ and $\pi(F)$ if $\pi(F)\ne F$, while $\pi^{\triangle}=\pi$ if and only if $\pi(F) = F$.  In either case,   
 $\pi^{\triangle}$ has $F$ as its final symbol.  We let $\pi^{\triangle}_{-}$  be the permutation on $n-1$ symbols 
obtained from $\pi^{\triangle}$ by dropping the last symbol $F$ from $\pi^{\triangle}$.  As an example, if 
$\pi = aFbcd$, then $\pi^{\triangle} = adbcF$, and $\pi^{\triangle}_{-} = adbc$.  We call the operation $\pi\rightarrow \pi^{\triangle}_{-} $ 
{\it contraction}, and we call $\pi^{\triangle}_{-}$ the {\it contraction of the permutation $\pi$}.
Further, for any subset $R\subset A$, let 
$R^{\triangle} =  \{ \pi^{\triangle}: \pi\in R \}$, and $R^{\triangle}_{-} = \{ \pi^{\triangle}_{-}: \pi\in R \}$.  So $R^{\triangle}_{-}$ 
is a permutation array on the symbol set $\Omega - \{F\}$ of size $n-1$, and is called the {\it contraction of $R$}.

We note some basic properties related to the contraction operation.

\begin{lemma} \label{basicsintro} Let $G$ be a permutation group acting on the set $\Omega$ of size $n$, and let $\pi,\sigma\in G$. 

\noindent {\textbf a)} The only coordinates in either $\pi$ or $\sigma$ whose values are affected by the $\triangle$ operation are $\pi^{-1}(F), \sigma^{-1}(F),$ and $F$. 
Thus $hd(\pi^{\triangle}, \sigma^{\triangle})\geq hd(\pi, \sigma) - 3.$

\noindent {\textbf b)} Assume $hd(\pi^{\triangle}, \sigma^{\triangle}) = hd(\pi, \sigma) - 3$.  Then $\pi\sigma^{-1}$ contains 
a $3$-cycle in its disjoint cycle factorization, and $|G|$ is divisible by $3$.

\noindent {\textbf c)} Let $S\subseteq G$.  Then $|S^{\triangle}| = |S^{\triangle}_{-}|$ and $hd(S^{\triangle}) = hd(S^{\triangle}_{-})$.  If  
also $hd(S) > 3$,  then $|S| = |S^{\triangle}|$.

\end{lemma}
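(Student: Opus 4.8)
The plan is to treat the three parts in order, with (a) and (c) being short and part (b) carrying the real content. A useful observation to record first for (a) is that the $\triangle$ operation is premultiplication by a transposition: reading off the defining cases, $\pi^{\triangle}$ agrees with $\pi$ at every coordinate except $\pi^{-1}(F)$ (whose value changes from $F$ to $\pi(F)$) and $F$ (whose value changes from $\pi(F)$ to $F$), so $\pi^{\triangle}=(F\,\pi(F))\,\pi$ when $\pi(F)\neq F$, and $\pi^{\triangle}=\pi$ otherwise. Hence the only coordinates of $\pi$ or $\sigma$ touched by $\triangle$ lie in the set $T=\{\pi^{-1}(F),\sigma^{-1}(F),F\}$, of size at most $3$. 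At every coordinate outside $T$ we have $\pi^{\triangle}=\pi$ and $\sigma^{\triangle}=\sigma$ simultaneously, so $\pi,\sigma$ disagree there iff $\pi^{\triangle},\sigma^{\triangle}$ do; the disagreement count can change only on $T$, and each such coordinate changes it by at most $1$, giving $hd(\pi^{\triangle},\sigma^{\triangle})\geq hd(\pi,\sigma)-|T|\geq hd(\pi,\sigma)-3$.

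For (b), the crucial step is that a drop of exactly $3$ forces $|T|=3$, i.e.\ $\pi^{-1}(F),\sigma^{-1}(F),F$ are pairwise distinct, and forces each of them to switch from a disagreement of $\pi,\sigma$ to an agreement of $\pi^{\triangle},\sigma^{\triangle}$. Writing $a=\pi^{-1}(F)$ and $b=\sigma^{-1}(F)$, I would then read off the three forced equalities: at $F$ both contracted images equal $F$, so agreement is automatic and the original disagreement forces $\pi(F)\neq\sigma(F)$; at $a$, where $\pi^{\triangle}(a)=\pi(F)$ and $\sigma^{\triangle}(a)=\sigma(a)$, agreement forces $\sigma(a)=\pi(F)$; and symmetrically at $b$ it forces $\pi(b)=\sigma(F)$. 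The payoff is to feed these into $\rho=\pi\sigma^{-1}$ and trace the orbit of $F$: from $\sigma^{-1}(F)=b$ and $\pi(b)=\sigma(F)$ we get $\rho(F)=\sigma(F)$; from $\sigma^{-1}(\sigma(F))=F$ we get $\rho(\sigma(F))=\pi(F)$; and from $\sigma(a)=\pi(F)$ together with $\pi(a)=F$ we get $\rho(\pi(F))=F$. Thus $(F\,\sigma(F)\,\pi(F))$ is a genuine $3$-cycle of $\rho$ (the three points are distinct precisely because $a,b,F$ are), so $3\mid\mathrm{ord}(\rho)$; since $\rho\in G$, Lagrange's theorem gives $3\mid|G|$.

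For (c), dropping the terminal symbol $F$ is a bijection $\pi^{\triangle}\mapsto\pi^{\triangle}_{-}$, since every $\pi^{\triangle}$ ends in $F$ and appending $F$ inverts the map; hence $|S^{\triangle}|=|S^{\triangle}_{-}|$. Moreover any two elements of $S^{\triangle}$ agree in the final coordinate, so deleting it leaves all pairwise Hamming distances unchanged, and therefore the minima agree, $hd(S^{\triangle})=hd(S^{\triangle}_{-})$. Finally, assuming $hd(S)>3$, I would show $\pi\mapsto\pi^{\triangle}$ is injective on $S$: if $\pi\neq\sigma$ in $S$ had $\pi^{\triangle}=\sigma^{\triangle}$, then part (a) gives $0=hd(\pi^{\triangle},\sigma^{\triangle})\geq hd(\pi,\sigma)-3$, so $hd(\pi,\sigma)\leq 3<hd(S)$, contradicting the minimality defining $hd(S)$; hence $|S|=|S^{\triangle}|$.

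The main obstacle is part (b): the bookkeeping that converts the hypothesis ``the distance drops by exactly $3$'' into the three precise image equalities, and then the recognition that those equalities close up into the explicit $3$-cycle $(F\,\sigma(F)\,\pi(F))$ of $\pi\sigma^{-1}$. Everything else reduces to bijection-counting and a direct application of part (a).
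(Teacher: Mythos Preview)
Your proof is correct and follows essentially the same approach as the paper. The only cosmetic difference is in part (b): you use standard right-to-left composition and exhibit the $3$-cycle $(F,\sigma(F),\pi(F))$ in $\pi\sigma^{-1}$, whereas the paper composes left to right and exhibits the conjugate $3$-cycle $(\sigma^{-1}(F),F,\pi^{-1}(F))$; your added observation that $\pi^{\triangle}=(F\,\pi(F))\,\pi$ is a nice touch not made explicit in the paper.
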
 

\begin{proof} Part a) follows immediately from the definition of the $\triangle$ operation.

For b), the assumption implies that there are positions $x_{i}, x_{j}, F$ at which the image strings of $\pi$ and $\sigma$ disagree and $\pi^{\triangle}$ and $\sigma^{\triangle}$ 
agree.  So for some indices $s,t$ we must have $\pi(x_{i}) = x_{s}, \pi(x_{j}) = F, \pi(F) = x_{t}$, while $\sigma(x_{i}) = F, \sigma(x_{j}) = x_{t}, \sigma(F) = x_{s}$.  
Then $\pi\sigma^{-1}$ (composing left to right) contains 
the $3$-cycle $(x_{i}, F, x_{j})$ in its disjoint cycle factorization.  Thus the subgroup of $G$ generated by $\pi\sigma^{-1}$ has order divisible by $3$, and hence 
$|G|$ is divisible by $3$ by Lagrange's theorem. 

Consider c).  The first two equalities follow from the fact that all image strings in $S^{\triangle}$ have $F$ as their last coordinate.
To see $|S| = |S^{\triangle}|$ when $hd(S) > 3$, suppose to the contrary that $\pi^{\triangle} = \sigma^{\triangle}$ for distinct $\pi, \sigma\in S$.  As noted in the proof of part a), 
$\pi^{\triangle}$ and $\sigma^{\triangle}$ can agree in at most three positions where $\pi$ and $\sigma$ disagreed.  Thus $\pi$ and $\sigma$ already agreed in at 
least $n-3$ positions.  So $hd(\pi, \sigma)\le 3$, a contradiction.        \end{proof}

\smallskip

Consider a permutation array $H$ on $n$ symbols with $hd(H) = d$.  The array $H^{\triangle}_{-}$ is on $n-1$ symbols and satisfes $hd(H^{\triangle}_{-})\geq d-3$ 
by Lemma \ref{basicsintro}a,c.  For the arrays $H = AGL(1,q), PGL(2,q)$ and certain Mathieu groups, our goal is to find a subset $I\subset H$ with 
$hd(I^{\triangle}_{-})\geq d-2$; that is, a subset $I$ whose contraction $I^{\triangle}_{-}$ has Hamming distance larger by $1$ than the lower bound $d-3$ for $hd(H^{\triangle}_{-})$
 given by Lemma \ref{basicsintro}a.  
The lower bound $M(n-1, d-2)\geq |I^{\triangle}_{-}|$ follows.  Now our underlying arrays $H$ will satisfy $hd(H) > 3$, so $hd(I)\geq hd(H) > 3$.
Thus by Lemma \ref{basicsintro}c we have $hd(I^{\triangle}_{-}) = hd(I^{\triangle})$ and $|I^{\triangle}_{-}| = |I^{\triangle}| = |I|$.  So we get $M(n-1, d-2)\geq |I|$, yielding the main results of this paper.

To find such a subset $I$ of $H$, we employ a graph $C_{H}$ defined as follows.

\begin{definition} Let $H$ be a permutation array with $hd(H) = d$.  Define the \underline{contraction graph for $H$}, denoted $C_{H}$, by 
$V(C_{H}) = H$ and $E(C_{H}) = \{ \pi\sigma: \pi, \sigma \in H, hd(\pi^{\triangle}, \sigma^{\triangle}) = d-3\}$.  
\end{definition}
 For $\pi\in C_{H}$, notice that if $\pi(F) = F$, then $\pi$ is an isolated 
point in $C_{H}$.  This is because then $\pi^{\triangle} = \pi$, so that for any other $\sigma\in C_{H}$ we have 
$hd(\pi^{\triangle}, \sigma^{\triangle}) = hd(\pi, \sigma^{\triangle}) \geq hd(\pi, \sigma) - 2$, implying no edge joining $\pi$ 
and $\sigma$ in $C_{H}$.  We thus have the following characterization of edges in $C_{H}$:

\begin{equation} \label{contractionedges}
\pi\sigma\in E(C_{H})\Longleftrightarrow \{ \pi(F)\ne F, \sigma(F)\ne F, \sigma(\pi^{-1}(F)) = \pi(F), \pi(\sigma^{-1}(F)) = \sigma(F) \}.
\end{equation}

This condition on edges is illustrated in Figure 1.

\begin{figure}[htb]
\begin{center}
\includegraphics{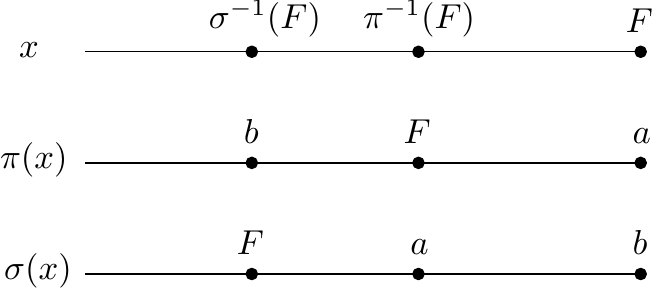}
\end{center}
\caption{Neighbors $\pi, \sigma$ in $C_{H}$; $\sigma(\pi^{-1}(F)) = \pi(F)$, and $\pi(\sigma^{-1}(F)) = \sigma(F)$}
\label{f4}
\end{figure}

Since $hd(H^{\triangle})\geq d-3$ 
by Lemma \ref{basicsintro}a, it follows that
any independent set $I$ of vertices in $C_{H}$ must satisfy $hd(I^{\triangle})\geq d-2$.  Now by 
using Lemma \ref{basicsintro}a,c (together with $hd(H) > 3$ for our arrays $H$) 
 get $M(n-1, d-2)\geq |I|$ as explained in the preceding paragraph. 
 We are thus reduced to finding a large independent set in $C_{H}$ for each of 
the arrays $H = AGL(1,q), PGL(2,q)$, and Mathieu groups considered in this paper.

 \section{The contraction graph for $AGL(1,q)$}
        
Let $q$ be a prime power.  Recall the Affine General Linear Group $AGL(1,q)$ acting as a permutation group on the finite field $GF(q)$ of size $q$, as the set 
of transformations $\{ x\rightarrow ax+b: a\ne 0, x, b\in GF(q) \}$ under the binary operation of composition.  
For any $\pi\in AGL(1,q)$ the permutation $\pi^{\triangle}$ on $GF(q)$ is defined as in the previous section, 
based on some ordering $x_{1}, x_{2}, \cdots , x_{q}$ of the elements of $GF(q)$, where $F = x_{q}$ is 
a distinguished element. 
Clearly $|AGL(1,q)| = q(q-1)$.  By standard facts $AGL(1,q)$ is sharply $2$-transitive in this action, and it is straightforward to see that 
$hd(AGL(1,q)) = q-1$ (see Lemma \ref{roots}a for most of that short proof).    

Our goal in this section 
is to obtain a lower bound on $M(q-1,q-3)$ for prime powers 
$q\geq 7$ satisfying $q\equiv 1($mod $3)$.  Our method will involve the contraction graph $C_{AGL(1,q)}$ for $AGL(1,q)$, which we 
henceforth abbreviate by $C_{A}(q)$.

By definition we then have 
$V(C_{A}(q)) = AGL(1,q)$, and $E(C_{A}(q)) = \{ \pi\sigma: hd(\pi^{\triangle}, \sigma^{\triangle}) =  q - 4 \}$.  Following the plan 
described in the previous section, we find an independent set $I$ in $C_{A}(q)$.  Once we have such an $I$, then 
$I^{\triangle}_{-}$ is a permutation array on $q-1$ symbols, and by Lemma \ref{basicsintro}c satisfies $hd(I^{\triangle}_{-}) = hd(I^{\triangle})\geq q-3$.  
This implies the lower bound 
 $M(q-1,q-3) \geq |I^{\triangle}_{-}| = |I^{\triangle}| = |I|$, the last equality by Lemma \ref{basicsintro}c, since $q\geq 7$ implies $hd(I) \geq q-1 > 3$.  
 The actual size of $I$ will then yield our precise lower bound. 
 
 We are thus reduced to finding a large independent set $I$ in $C_{A}(q)$, and from this we get the bound $M(q-1,q-3)\geq |I|$.  
We begin on that in the following Lemma, which establishes relations in the the finite field 
$GF(q)$ that correspond to edges in the graph $C_{A}(q)$.

\begin{lemma} \label{roots} Let $\pi$ and $\sigma$ be vertices of the graph $C_{A}(q)$, $q\equiv 1($mod $3)$, say 
with $\sigma(x) = ax + r$ and $\pi(x) = bx + s$.

\noindent \textbf{a)} If $a\ne b$, then $hd(\pi, \sigma) = q - 1$. 

\noindent \textbf{b)} If $\pi(F) = F$, then $\pi$ is an isolated point in $C_{A}(q)$.  There are $q-1$ points $\pi$ satisfying $\pi(F) = F$.     

\noindent \textbf{c)} Suppose $\pi$ and $\sigma$ are neighbors in $C_{A}(q)$. Then 

\textbf{c1)} $hd(\pi, \sigma) = q-1$, and $hd(\pi^{\triangle}, \sigma^{\triangle}) = hd(\pi, \sigma) - 3$, and

\textbf{c2)} $\frac{a}{b}$ and $\frac{b}{a}$ 
are the distinct roots of the quadratic $t^{2} + t + 1 = 0$ over $GF(q)$.

 \end{lemma}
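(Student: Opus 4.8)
The plan is to translate the graph-theoretic edge condition \eqref{contractionedges} into algebraic equations in $GF(q)$ and then solve them. Throughout I write $\sigma(x)=ax+r$ and $\pi(x)=bx+s$ with $a,b\neq 0$, and I set $F=x_q$ to be the distinguished element of $GF(q)$.

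For part \textbf{a)}, if $a\neq b$ then $\pi(x)=\sigma(x)$ forces $(b-a)x = r-s$, which has the unique solution $x=(r-s)/(b-a)$. Hence $\pi$ and $\sigma$ agree in exactly one coordinate and disagree in the other $q-1$, giving $hd(\pi,\sigma)=q-1$. Part \textbf{b)} is essentially bookkeeping: if $\pi(F)=F$ then $\pi^{\triangle}=\pi$, and the argument already given in the text right after the definition of $C_H$ shows $\pi$ is isolated. The transformations fixing $F$ are exactly those with $bF+s=F$, i.e. $s=(1-b)F$; since $b$ ranges over the $q-1$ nonzero field elements and $s$ is then determined, there are exactly $q-1$ such $\pi$.

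The substance is part \textbf{c)}. First I would dispose of the $a=b$ case: if $\pi$ and $\sigma$ are neighbors then by part \textbf{b)} neither fixes $F$, and I claim $a\neq b$. Indeed, suppose $a=b$; then by \eqref{contractionedges} we need $\sigma(\pi^{-1}(F))=\pi(F)$, i.e. $a\pi^{-1}(F)+r = \pi(F)=a\pi^{-1}(F)+s$ (using $\pi^{-1}(F)$ and $a=b$), forcing $r=s$, whence $\pi=\sigma$, contradicting that they are distinct neighbors. So $a\neq b$, and \textbf{c1)}'s equality $hd(\pi,\sigma)=q-1$ follows immediately from part \textbf{a)}; the second equality $hd(\pi^{\triangle},\sigma^{\triangle})=hd(\pi,\sigma)-3$ is just the definition of an edge in $C_A(q)$ combined with $hd(\pi,\sigma)=q-1$ and the defining relation $E(C_A(q))=\{\pi\sigma: hd(\pi^\triangle,\sigma^\triangle)=q-4\}$.

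For \textbf{c2)}, which I expect to be the main obstacle, I would write out the two edge relations from \eqref{contractionedges} explicitly. With $\pi^{-1}(F)=(F-s)/b$ and $\sigma^{-1}(F)=(F-r)/a$, the conditions $\sigma(\pi^{-1}(F))=\pi(F)$ and $\pi(\sigma^{-1}(F))=\sigma(F)$ become
\begin{equation*}
a\cdot\frac{F-s}{b}+r = bF+s,\qquad b\cdot\frac{F-r}{a}+s = aF+r.
\end{equation*}
The idea is to clear denominators and manipulate these two equations to eliminate $r,s,F$ and extract a relation purely in the ratio $t=a/b$. Multiplying out and subtracting the two equations (or forming a suitable combination) should collapse the $F$-dependence and the constants $r,s$, leaving a polynomial identity in $a$ and $b$. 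I anticipate that after simplification one arrives at $a^2+ab+b^2=0$, equivalently $t^2+t+1=0$ where $t=a/b$; dividing instead by $a^2$ gives the same quadratic satisfied by $b/a$. The hypothesis $q\equiv 1\pmod 3$ guarantees that $t^2+t+1$ has two distinct roots in $GF(q)$ (the primitive cube roots of unity), so $a/b$ and $b/a$ are genuinely field elements and are distinct (they coincide only if $t=t^{-1}$, i.e. $t^2=1$, which is incompatible with $t^2+t+1=0$ unless $3=0$ in $GF(q)$). The careful algebra in reducing the two displayed equations to the single quadratic $t^2+t+1=0$ is the delicate part, and I would keep close track of the roles of $F$, $r$, $s$ to make sure they genuinely cancel rather than imposing extra constraints.
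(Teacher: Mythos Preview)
Your plan is sound and tracks the paper's argument closely, but there is one slip and one place where you should see the algebra through.

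\textbf{The slip in ruling out $a=b$.} You write $\pi(F)=a\pi^{-1}(F)+s$, but $\pi(F)=bF+s$ (equal to $aF+s$ when $a=b$), and $\pi^{-1}(F)\neq F$ since $\pi(F)\neq F$. So that line does not give $r=s$. The paper avoids this case altogether: from the edge definition $hd(\pi^{\triangle},\sigma^{\triangle})=q-4$ and the general bound $hd(\pi^{\triangle},\sigma^{\triangle})\geq hd(\pi,\sigma)-3$ one gets $hd(\pi,\sigma)\leq q-1$; since $hd(\pi,\sigma)\in\{q-1,q\}$, this forces $hd(\pi,\sigma)=q-1$ and \textbf{c1} follows, with $a\neq b$ as a by-product of part~\textbf{a)}. (Your route can also be repaired: when $a=b$, the two conditions in \eqref{contractionedges} reduce to $r-2s=(a-1)F$ and $s-2r=(a-1)F$; subtracting gives $3(r-s)=0$, and $3\neq 0$ in $GF(q)$ since $q\equiv 1\pmod 3$, so $r=s$ and $\pi=\sigma$.)

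\textbf{The elimination in c2.} Your anticipation is correct, and the algebra does collapse. Clearing denominators in your two displayed equations gives
\[
(a-b^{2})F = s(a+b)-br,\qquad (b-a^{2})F = r(a+b)-as.
\]
If $a+b\neq 0$, solve the first for $s$ and substitute into the second; after simplification one obtains
\[
r\,(a^{2}+ab+b^{2}) \;=\; F(1-a)\,(a^{2}+ab+b^{2}).
\]
So either $a^{2}+ab+b^{2}=0$, which is exactly \textbf{c2}, or $r=(1-a)F$, which gives $\sigma(F)=aF+r=F$, contradicting part~\textbf{b)}. (If $a+b=0$ the first displayed equation already yields $r=(1-a)F$ and the same contradiction.) The paper reaches the same conclusion by a slightly different bookkeeping device: it writes differences such as $\sigma(\alpha)-\sigma(\beta)=a(\alpha-\beta)$ with $\alpha=\sigma^{-1}(F)$, $\beta=\pi^{-1}(F)$, which eliminates $r$ and $s$ at the outset and leads directly to $a^{2}+ab+b^{2}=0$ without the case split.
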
 

\begin{proof} For a),  just observe that $\pi(x) = \sigma(x)$ has the unique solution $x = \frac{s - r}{a - b}$.

For the first claim in b) suppose not, and let $\sigma$ be a neighbor of $\pi$ in $C_{A}(q)$.  
Then we have $hd(\pi^{\triangle}, \sigma^{\triangle}) = q - 4$, implying also 
that $hd(\pi, \sigma) = q - 1$ by Lemma \ref{basicsintro}a.  Let $i$ be the coordinate of agreement between $\pi$ and $\sigma$.  Since 
$\pi(F) = F$, we have $\pi^{\triangle} = \pi$.  Thus $hd(\pi, \sigma^{\triangle}) = q - 4$.  Now $\sigma^{\triangle}$ can have at most two 
coordinates, apart from $i$, in which it agrees with $\pi$, these being $F$ and $j = \sigma^{-1}(F)$.  So altogether 
$\pi$ and $\sigma^{\triangle}$ agree in at most the $3$ coordinates $i,j, F$.  So 
$ q - 4 = hd(\pi, \sigma^{\triangle})  \geq q-3$, a contradiction. 

Now consider the second claim in b).  For any fixed $i\in GF(q)$, $i\ne F$

Since $\pi(F) = F$,  we have $q-1$ choices for the value $\pi(i)$ for any fixed $i\in GF(q)$, $i\ne F$.  
Hence there are $q-1$ choices for the ordered pair $( \pi(F) (= F), \pi(i) )$, each such choice determining $\pi$ uniquely by the sharp $2$-transitivity of $AGL(1,q)$ acting on $GF(q)$. The claim follows.

For c1), by the definition of edges in $C_{A}(q)$ we have $q-4 = hd(\pi^{\triangle}, \sigma^{\triangle}) \geq hd(\pi, \sigma) - 3$ using Lemma \ref{basicsintro}a.  
Since $hd(\pi, \sigma) = q$ or $q-1$, it follows that 
$hd(\pi, \sigma) = q-1$ and we have equality throughout, as required.

Consider c2).  By part c1) we have $hd(\pi, \sigma)= q-1$   
and $hd(\pi^{\triangle}, \sigma^{\triangle}) = hd(\pi, \sigma) - 3$.  
So there are distinct $\alpha, \beta\in GF(q)$, with neither $\alpha$ nor $\beta$ being $F$, 
such that $\sigma(F) = i$, $\sigma(\alpha) = F$, and $\sigma(\beta) = j$, and $\pi(F) = j$, $\pi(\alpha) = i$, and $\pi(\beta) = F$ for distinct $i,j\in GF(q)$.  This gives the following 
set of equations in $GF(q)$.

\begin{equation} \label{sysa}
\left\{
\begin{array}{rcl}
\sigma(\alpha)-\sigma(\beta)=& F - j &= a(\alpha-\beta) \\
\sigma(\alpha)-\sigma(F) =&F - i &= a(\alpha - F) \\
\pi(\alpha)-\pi(\beta) =&i - F &= b(\alpha-\beta) \\
\pi(\alpha)-\pi(F) =&i - j &= b(\alpha - F).\\
\end{array}
\right.
\end{equation}

The second and third equations of (\ref{sysa}) imply

\begin{equation} \label{sysb}
a(\alpha-F)=-b(\alpha-\beta).
\end{equation}

Now starting with the first equation of (\ref{sysa}) we obtain
\begin{align*} 
a(\alpha-\beta) &= F - j  \\
&= (F - i)+(i-j) \\
&= (a+b)(\alpha-F) \text{\quad (by the second and fourth equations of (\ref{sysa}))}.
\end{align*}

Multiplying equation (\ref{sysb}) by $a$ and the last equation by $b$, we obtain the equations
\begin{equation} \label{sysc}
\left\{
\begin{array}{rl}
a^2(\alpha-F) &= -ab(\alpha-\beta) \\
ab(\alpha-\beta) &= b(a+b)(\alpha-F) .\\
\end{array}
\right.
\end{equation}

Thus $a^2(\alpha-F) = -b(a+b)(\alpha-F)$, and on dividing by $\alpha-F$ (since $\alpha\ne F$) we obtain
\begin{align} \label{quad}
a^2+b(a+b)=0.
\end{align}
Dividing equation (\ref{quad}) by $a^2$ or by $b^2$, we obtain that $a/b$ and $b/a$ are both roots of the equation $t^2+t+1=0$.    

We show that $a/b$ and $b/a$ are distinct.   
Assuming otherwise, then $a/b = 1$ or $-1$.  If $q$ is even then from $1+ t + t^{2} = 0$ we get the contradiction $1 = 0$ since the characteristic is $2$.  
Now assume $q$ is odd.  If $a/b = 1$, then we get $1+1+1=0$, forcing $q\equiv 0($mod $3)$, a contradiction.  If $a/b = -1$, then we 
get $1=0$, again a contradiction.   \end{proof}

\bigskip

Let $t_{1}$ and $\frac{1}{t_{1}}$ be the distinct roots of $t^{2} + t + 1 = 0$ in $GF(q)$ for $q\equiv 1 \pmod 3$ (by Corollary \ref{quadroots}a).   
Let $\pi\in C_{A}(q)$ with $\pi(x) = ax + r$, and let $\sigma$ be a neighbor of $\pi$ in $C_{A}(q)$.  Then by 
Lemma \ref{roots}c2 we have $\sigma(x) = at_{1} + s_{1}$ 
or $\sigma(x) = a\frac{1}{t_{1}} + s_{2}$, so far with $s_{1}$ and $s_{2}$ undetermined.              
The next lemma shows that $s_{1}$ and $s_{2}$ are uniquely determined by $\pi$ and $t_{1}$.

\begin{lemma}\label{degree2} Let $q$ be a prime power with $q\equiv 1 \pmod 3$. 
Suppose $\pi$ is not an isolated point of $C_{A}(q)$, say with $\pi(x) = ax + r$.  Let $t_{1}$ be a root of $t^{2} + t + 1 = 0$ in $GF(q)$. 
Then the neighbors of $\pi$ 
in $C_{A}(q)$ are $\sigma_{1}$ and $\sigma_{2}$, given by $\sigma_{1}(x) = at_{1}x + (a - t_{1})F + r(1 + t_{1})$ and 
$\sigma_{2}(x) = a\frac{1}{t_{1}}x + (a - \frac{1}{t_{1}})F + r(1 + \frac{1}{t_{1}})$.  In particular, each 
non-isolated point of $C_{A}(q)$ has degree $2$ in $C_{A}(q)$.  
\end{lemma}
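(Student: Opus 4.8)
The plan is to translate the combinatorial edge condition (\ref{contractionedges}) into explicit equations over $GF(q)$ and solve them. Write a prospective neighbor as $\sigma(x) = bx + s$. Since $\pi$ is not isolated, Lemma \ref{roots}b gives $\pi(F) \neq F$, so the first clause of (\ref{contractionedges}) holds automatically; and by Lemma \ref{roots}c2 the slope ratio $b/a$ must be one of the two distinct roots $t_{1}, 1/t_{1}$ of $t^{2} + t + 1 = 0$. Hence there are exactly two admissible slopes, $b = a t_{1}$ and $b = a/t_{1}$, and it remains to show that each slope determines the constant term $s$ uniquely and that the resulting $\sigma$ is genuinely adjacent to $\pi$. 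Since $a \neq 0$ and the roots are nonzero, both slopes are nonzero, so each candidate lies in $AGL(1,q)$; and as $t_{1}\neq 1$ (otherwise $3=0$, excluded since $q\equiv 1$ (mod $3$)), neither candidate equals $\pi$.

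First I would fix the slope $b = a t_{1}$ and write out the two remaining clauses of (\ref{contractionedges}). Using $\pi^{-1}(y) = (y - r)/a$ and $\sigma^{-1}(y) = (y - s)/b$, the clause $\sigma(\pi^{-1}(F)) = \pi(F)$ becomes $b(F - r)/a + s = aF + r$, which upon substituting $b/a = t_{1}$ reads $t_{1}(F - r) + s = aF + r$ and is linear in $s$. Solving gives $s = (a - t_{1})F + r(1 + t_{1})$, exactly the constant term claimed for $\sigma_{1}$. Replacing $t_{1}$ by $1/t_{1}$ throughout yields the constant term of $\sigma_{2}$. Thus for each admissible slope the constant term is forced, so $\pi$ has at most the two candidate neighbors $\sigma_{1}, \sigma_{2}$.

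It then remains to confirm that $\sigma_{1}$ (and symmetrically $\sigma_{2}$) is actually adjacent to $\pi$, that is, that the fourth clause $\pi(\sigma_{1}^{-1}(F)) = \sigma_{1}(F)$ also holds and that $\sigma_{1}(F) \neq F$. The key observation is that $t_{1}$ is a primitive cube root of unity: dividing $t_{1}^{2} + t_{1} + 1 = 0$ by $t_{1}$ gives $1/t_{1} = -1 - t_{1}$, while the relation itself gives $t_{1}^{2} = -1 - t_{1}$. Substituting $b = a t_{1}$ and the solved value of $s$ into $\pi(\sigma_{1}^{-1}(F)) = \sigma_{1}(F)$, that is, into $a(F - s)/b + r = bF + s$, and reducing the coefficients of $F$ and of $r$ separately using these two identities, both sides coincide, so the fourth clause is an automatic consequence. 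Finally, $\sigma_{1}(F) = F$ together with this clause would force $\pi(F) = F$, contradicting the non-isolation of $\pi$; hence $\sigma_{1}(F) \neq F$ and all four clauses of (\ref{contractionedges}) hold, making $\sigma_{1}$ a true neighbor. Since $t_{1} \neq 1/t_{1}$ the slopes differ, so $\sigma_{1} \neq \sigma_{2}$, and therefore every non-isolated $\pi$ has exactly the two neighbors $\sigma_{1}, \sigma_{2}$, that is, degree $2$.

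I expect the main obstacle to be the consistency verification in the last step: one must check that the constant term forced by one adjacency clause automatically satisfies the other, and this hinges on the cube-root-of-unity relations $1/t_{1} = -1 - t_{1}$ and $t_{1}^{2} = -1 - t_{1}$ rather than on any remaining freedom in $s$. The bookkeeping is routine once these identities are in hand, but matching the $F$- and $r$-coefficients on both sides is the point where the argument could silently fail, so that is where I would concentrate the computation.
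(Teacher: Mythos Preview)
Your proposal is correct and follows essentially the same approach as the paper: both arguments translate the edge characterization (\ref{contractionedges}) into the pair of equations $\sigma(\pi^{-1}(F))=\pi(F)$ and $\pi(\sigma^{-1}(F))=\sigma(F)$, use Lemma~\ref{roots}c2 to pin down the slope, solve the first equation for the constant term, and then verify the second using $t_{1}^{2}+t_{1}+1=0$. The only notable difference is organizational---the paper verifies adjacency of $\sigma_{1},\sigma_{2}$ first and uniqueness second, whereas you reverse the order---and your deduction of $\sigma_{1}(F)\neq F$ from the already-verified fourth clause (via $\sigma_{1}(F)=F\Rightarrow\sigma_{1}^{-1}(F)=F\Rightarrow\pi(F)=F$) is a bit cleaner than the paper's direct case split on $a=1$ versus $a\neq 1$.
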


\begin{proof}: Let $N(\pi)$ be the set of neighbors of $\pi$ in $C_{A}(q)$.  First we verify that $\sigma_{1}, \sigma_{2}\in N(\pi)$, giving details only 
for $\sigma_{1}\in N(\pi)$ as the containment $\sigma_{2}\in N(\pi)$ is proved similarly.  To do this, we show that all conditions of (\ref{contractionedges}) are 
satisfied with $\sigma_{1}$ playing the role of $\sigma$.  Clearly $\pi(F)\ne F$ since $\pi$ is not isolated.  To show $\sigma_{1}(F)\ne F$, assume not.  
Suppose first that $a\ne 1$.  Then $\sigma_{1}(F) = F$ yields $F = \frac{r}{1-a}$.  
But now we get $\pi(F) = \frac{ar}{1-a} + r = \frac{r}{1-a} = F$, a contradiction.  Next suppose $a = 1$ so $\pi(x) = x + r$.  Then $\sigma_{1}(F) = F$ together 
with $a = 1$ yields $r(1+t_{1}) = 0$. 
Combining this  with $t_{1}\ne -1$ yields 
$r = 0$.  But then $\pi(F) = F$, a contradiction.  

So it remains to show that $\sigma_{1}(\pi^{-1}(F)) = \pi(F)$ and 
$\pi(\sigma_{1}^{-1}(F)) = \sigma_{1}(F)$.  For the first equality, solving $ax + r = F$ we 
obtain $\pi^{-1}(F) = \frac{F-r}{a}$.  Thus $\sigma_{1}(\pi^{-1}(F)) = at_{1}\big(\frac{F-r}{a} \big) + (a-t_{1})F + r(1 + t_{1}) 
 = aF + r = \pi(F)$, as required.  For the second equality, from the formula for $\sigma_{1}$ 
 we obtain $\sigma_{1}^{-1}(F) = \frac{1}{at_{1}}\big( F(1-a+t_{1}) - r(1+t_{1})\big)$.  Plugging this into 
 $\pi$ and simplifying, we obtain $\pi(\sigma_{1}^{-1}(F)) = \frac{1}{t_{1}}\big( F(1-a+t_{1}) - r(1+t_{1})\big) + r$.  Working backwards from the equality 
 $\pi(\sigma_{1}^{-1}(F)) = \sigma_{1}(F)$ we must show that $\frac{1}{t_{1}}\big( F(1-a+t_{1}) - r(1+t_{1})\big) = F\big(a-t_{1}+at_{1}\big) + rt_{1}.$  This is equivalent to 
 $F\big(1-a+t_{1} \big) = r(t_{1}^{2}+t_{1}+1) + F(at_{1}-t_{1}^{2}+at_{1}^{2}) = F(at_{1}-t_{1}^{2}+at_{1}^{2})$.  We are now reduced to showing 
 $1-a+t_{1} = at_{1}-t_{1}^{2}+at_{1}^{2}.$  This follows from $t_{1}^{2} + t_{1} + 1 = 0$. 
 
 Now let $\sigma\in N(\pi)$, and we show that $\sigma = \sigma_{1}$ or $ \sigma_{2}$.  By Lemma \ref{roots}, we know that 
 $\sigma(x) = at_{1} + s_{1}$ or $\sigma(x) = a\frac{1}{t_{1}} + s_{2}$ for suitable $s_{1}, s_{2}\in GF(q)$.  Suppose first 
 that  $\sigma(x) = at_{1} + s_{1}$.  Applying the equality $\sigma(\pi^{-1}(F)) = \pi(F)$ together with $\pi^{-1}(F) = \frac{F-r}{a}$, we get 
 $at_{1}\big(\frac{F-r}{a}\big) + s_{1} = aF + r$. so $s_{1} = (a-t_{1})F + r(1+t_{1})$.  Thus $\sigma = \sigma_{1}$.  
A very similar argument shows that if $\sigma(x) = a\frac{1}{t_{1}} + s_{2}$, then $s_{2} = (a-\frac{1}{t_{1}})F + r(1+ \frac{1}{t_{1}})$, 
and thus $\sigma = \sigma_{2}$.  So we have $N(\pi) = \{\sigma_{1}, \sigma_{2}\}$, completing the proof.           \end{proof} 

Consider the subgroup $Q = \{ x + b: b\in GF(q)\}$ of $AGL(1,q)$.  Clearly $|Q| = q$, and for each $h\in GF(q), h\ne 0$, $Q$ has the coset 
$hxQ = \{ hx + b: b\in GF(q) \}$, which we abbreviate by $Q_{h}$.

\begin{theorem} Let $q$ be a prime power with $q\equiv 1 \pmod 3$.  Then the connected components of $C_{A}(q)$ are as follows.

\noindent{\bf a)} There are $q-1$ isolated points, these being the points $\pi$ satisfying $\pi(F) = F$. 

\noindent{\bf b)} If $q$ is odd, then each non-isolated point component is a cycle of length 6.

\noindent{\bf c)} If $q$ is even, then each non-isolated point component is a cycle of length 3.

\end{theorem}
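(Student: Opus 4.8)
Part (a) is immediate: by Lemma \ref{roots}b the isolated points of $C_A(q)$ are precisely the $q-1$ permutations $\pi$ with $\pi(F)=F$. For (b) and (c), Lemma \ref{degree2} tells us that every non-isolated vertex has degree exactly $2$, so each non-isolated component is a cycle; the entire remaining task is to compute the cycle length.

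The plan is to fix one root $t_1$ of $t^2+t+1=0$ and define a single-valued ``forward'' map $T$ on the non-isolated vertices by sending $\pi$ to the neighbor $\sigma_1$ of Lemma \ref{degree2}, namely the one whose leading coefficient is $a t_1$ when $\pi(x)=ax+r$. The other neighbor $\sigma_2$ has leading coefficient $a t_1^{-1}=a t_1^2$, and since the two roots $t_1,t_1^2$ are distinct, $\sigma_1\neq\sigma_2$; moreover the $\times t_1^{-1}$ neighbor of $\sigma_1$ is again $\pi$, so $T$ is a bijection on the non-isolated vertices whose inverse is the $\times t_1^{-1}$ map. Because $T(\pi)$ and $T^{-1}(\pi)$ are the two distinct neighbors of $\pi$, the map $T$ walks around each cyclic component one step at a time, and so the length of that cycle equals the order of $T$ on it. It therefore suffices to compute this order.

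Writing $\pi\leftrightarrow(a,r)$ for $\pi(x)=ax+r$, the formula in Lemma \ref{degree2} gives $T(a,r)=\big(a t_1,\,(a-t_1)F+r(1+t_1)\big)$. Since $t_1$ is a primitive cube root of unity (from $t^2+t+1=0$ and $t_1\neq 1$ we get $t_1^3=1$), after three applications the leading coefficient returns to $a$; thus the order of $T$ is always a multiple of $3$ and no cycle can be shorter than $3$. The one genuine computation is to iterate the additive part three times. Setting $u=1+t_1$ and repeatedly using the identities $t_1^2=-u$, $u^2=t_1$, $t_1u=-1$, and hence $u^3=-1$, a direct calculation collapses the constant term to
\begin{equation*}
T^3(a,r)=\big(a,\,-r+2(1-a)F\big).
\end{equation*}
Verifying this identity is the crux of the argument and the only place requiring care; everything else is bookkeeping.

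From $T^3$ the two cases follow quickly. When $q$ is odd, $T^3(a,r)=(a,r)$ would force $2(1-a)F=2r$, i.e. $aF+r=F$, i.e. $\pi(F)=F$; since non-isolated points satisfy $\pi(F)\neq F$, we get $T^3\neq\mathrm{id}$ on every non-isolated vertex, while applying $T^3$ once more cancels the constant and yields $T^6=\mathrm{id}$. Hence $T$ has order exactly $6$ there and every non-isolated component is a $6$-cycle, proving (b). When $q$ is even, characteristic $2$ makes $2=0$, so the $F$-term vanishes and $u^3=-1=1$, giving $T^3=\mathrm{id}$; as $T(\pi)$ has leading coefficient $a t_1\neq a$ we have $T\neq\mathrm{id}$, so $T$ has order exactly $3$ and every non-isolated component is a $3$-cycle, proving (c).
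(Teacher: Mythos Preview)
Your proof is correct, though part (a) slightly overstates Lemma \ref{roots}b, which only gives one direction; the converse---that $\pi(F)\neq F$ implies $\pi$ is non-isolated---comes from the verification inside Lemma \ref{degree2} that the formula for $\sigma_1$ always produces a genuine neighbor whenever $\pi(F)\neq F$. You are implicitly using this anyway when you let $T$ act on all such vertices, so the argument is complete once this is made explicit.

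Your route for (b) and (c) is genuinely different from the paper's. The paper never writes down an explicit formula for the iterated step map; instead it extracts from the system of equations the relation $t_1=-\dfrac{\pi_j(F)-F}{\pi_{j+1}(F)-F}$, so that after three steps the ratio $\dfrac{\pi_0(F)-F}{\pi_3(F)-F}$ equals $-1$, and after six steps it equals $1$. Distinctness of the first six vertices is then argued via this ratio together with coset membership in the subgroup $\{x+b:b\in GF(q)\}$. Your approach instead parameterizes vertices by the pair $(a,r)$ and computes the closed form $T^3(a,r)=(a,\,-r+2(1-a)F)$ directly from the identities $t_1^2=-u$, $u^2=t_1$, $t_1u=-1$ for $u=1+t_1$. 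This is more explicitly computational but also more self-contained: once $T^3$ is in hand, both the odd and even cases drop out in one line, and the fixed-point condition $T^3(a,r)=(a,r)$ visibly becomes $aF+r=F$. The paper's ratio argument is perhaps more conceptual and ties the cycle length to the value $\pi(F)-F$, while your iteration makes the dependence on characteristic (the factor $2$) completely transparent.
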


\begin{proof} For part a), we show that $\pi\in C_{A}(q)$ is an isolated point if and only if $\pi(F) = F$.  Then a) follows by Lemma \ref{roots}b.  

If $\pi(F) = F$, then immediately $\pi$ is isolated in $C_{A}(q)$ by Lemma \ref{roots}b.  For the converse, suppose to the contrary that 
$\pi$ is isolated and that $\pi(F)\ne F$.  Let $\sigma_{1}$ be given by $\sigma_{1}(x) = at_{1}x + (a - t_{1})F + r(1 + t_{1})$ as in 
Lemma \ref{degree2}.  Then the proof of Lemma  \ref{degree2}, starting from the established claim $\pi(F)\ne F$ (this claim being an assumption here), shows 
that $\sigma_{1}$ is a neighbor of $\pi$ in $C_{A}(q)$.  This contradicts $\pi$ being isolated.             

Consider part b).  By Lemma \ref{degree2} each nontrivial component of $C_{A}(q)$ is a cycle.  Let $\pi_{0}$ be a point on such a cycle $C$, say with $\pi_{0}\in Q_{a}$.  
Let $t_{1}$ be a fixed root of $t^{2} + t + 1 = 0$.  Consider 
a sequence of $4$ vertices $\pi_{0}\pi_{1}\pi_{2}\pi_{3}$ on $C$ with $\pi_{j}\pi_{j+1}\in E(C_{A}(q))$ for $0\le j\le 2$.  
We may suppose that $\pi_{j}\in Q_{at_{1}^{j}}$ by Lemma \ref{degree2} and straightforward induction (otherwise replace $t_{1}$ by $\frac{1}{t_{1}}$).  
Thus $\pi_{0}, \pi_{1}, \pi_{2}$ are distinct since they belong  distinct cosets of $Q$.  Since $t_{1}^{3} = 1$, we see also that $\pi_{0}$ and $\pi_{3}$ 
belong to the same coset $Q_{a}$ of $Q$.  We now show that $\pi_{0}\ne \pi_{3}$.    
Writing $\pi_{1}(x) = bx + c$ (so $b = at_{1}$), we 
apply the first and third equations of (\ref{sysa}) with $\pi_{0}$ and $\pi_{1}$ playing the roles of 
$\sigma$ and $\pi$ respectively, to  get $t_{1} = \frac{b}{a} = -\big(\frac{i-F}{j-F}\big) = -\big(\frac{\pi_{0}(F)-F}{\pi_{1}(F)-F}\big)$.  Applying this equation two more times 
we get $1 = t_{1}^{3} = -\big(\frac{\pi_{0}(F)-F}{\pi_{3}(F)-F}\big)$, so that $\big(\frac{\pi_{0}(F)-F}{\pi_{3}(F)-F}\big) = -1$.  Thus $\pi_{0}(F)\ne \pi_{3}(F)$ , 
so $\pi_{0}\ne \pi_{3}$. Thus each cycle component has length at least $4$.

Consider now a sequence of $7$ vertices $\pi_{0}\pi_{1}\cdots \pi_{6}$ on $C$ with $\pi_{j}\pi_{j+1}\in E(C_{A}(q))$ for $0\le j\le 5$. We claim 
the first $6$ of these $\pi_{0}, \pi_{1}, \cdots, \pi_{5}$ must be distinct as follows.  Clearly any two vertices $\pi_{j}, \pi_{j+3}$ are distinct, $0\le j\le 2$, by the same 
argument that showed $\pi_{0}\ne \pi_{3}$.  But any two vertices $\pi_{i}, \pi_{j}$ with $i\not\equiv j ($mod $3)$ are distinct, 
since $t_{1}\ne 1$ and $t_{1}^{2}\ne 1$ imply that they  belong to different 
cosets of $Q$, proving the claim.  Finally note that $1 = t_{1}^{6} = \big(\frac{\pi_{0}(F)-F}{\pi_{6}(F)-F}\big)$, so that $\pi_{0}(F) = \pi_{6}(F)$.  Since also 
$\pi_{0}$ and $\pi_{6}$ also belong to the same coset $Q_{a}$ of $Q$, it follows that $\pi_{0} = \pi_{6}$.  Thus the component $C$ containing $\pi_{0}$ 
is a cycle of length $6$, as required.

Now consider part c).  Consider as above the sequence of $4$ vertices $\pi_{0}\pi_{1}\pi_{2}\pi_{3}$ in a nontrivial component, with 
$\pi_{j}\pi_{j+1}\in E(C_{A}(q))$ for $0\le j\le 2$.  We get $\big(\frac{\pi_{0}(F)-F}{\pi_{3}(F)-F}\big) = -1 = 1$ since $q$ is even.  So since also $\pi_{0}$ and $\pi_{3}$ belong to 
the same coset $Q_{a}$ of $Q$, it follows that $\pi_{0} = \pi_{3}$.  Thus the cycle containing $\pi_{0}$ has length $3$.     \end{proof}

\begin{corollary} Let $q$ be a prime power with $q\equiv 1 \pmod 3$ and $q\geq 7$.  Then

\smallskip 

\noindent{\bf a)} if $q$ is odd, then $M(q-1, q-3)\geq (q^2-1)/2$, and 

\smallskip

\noindent{\bf b)} if $q$ is even, then $M(q-1, q-3)\geq (q-1)(q+2)/3$.

\end{corollary}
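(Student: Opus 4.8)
The plan is to read off a maximum independent set $I$ in $C_{A}(q)$ directly from the structural description of its connected components given by the preceding theorem, and then invoke the reduction established in Section~1.2, namely that any independent set $I$ in $C_{A}(q)$ yields $M(q-1,q-3)\geq |I|$. This reduction is valid here because $q\geq 7$ forces $hd(I)\geq hd(AGL(1,q)) = q-1 > 3$, so that by Lemma~\ref{basicsintro}c we have $|I^{\triangle}_{-}| = |I|$ and $hd(I^{\triangle}_{-}) = hd(I^{\triangle}) \geq q-3$. Since an independent set may be selected within each connected component independently and there are no edges between distinct components, the independence number $\alpha(C_{A}(q))$ is simply the sum of the independence numbers of its components; thus it suffices to tally these.

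First I would record the vertex count. Since $|AGL(1,q)| = q(q-1)$ and, by part~a) of the theorem, exactly $q-1$ of these vertices are isolated points, the remaining $q(q-1) - (q-1) = (q-1)^{2}$ vertices lie on the non-trivial (cyclic) components. Each isolated point is its own component with independence number $1$, contributing $q-1$ in total.

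For odd $q$ (part~a) of the corollary), each non-trivial component is a $6$-cycle by part~b) of the theorem, and $\alpha(C_{6}) = 3$. The number of such $6$-cycles is $(q-1)^{2}/6$, so the non-isolated components together contribute $3\cdot (q-1)^{2}/6 = (q-1)^{2}/2$. Adding the isolated points gives
\[
\alpha(C_{A}(q)) = (q-1) + \frac{(q-1)^{2}}{2} = \frac{(q-1)(q+1)}{2} = \frac{q^{2}-1}{2},
\]
whence $M(q-1,q-3)\geq (q^{2}-1)/2$. For even $q$ (part~b), each non-trivial component is a $3$-cycle by part~c) of the theorem, with $\alpha(C_{3}) = 1$; there are $(q-1)^{2}/3$ such triangles, contributing $(q-1)^{2}/3$, so
\[
\alpha(C_{A}(q)) = (q-1) + \frac{(q-1)^{2}}{3} = \frac{(q-1)(q+2)}{3},
\]
giving $M(q-1,q-3)\geq (q-1)(q+2)/3$.

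The only points requiring care are bookkeeping rather than genuine obstacles: I must confirm that the component counts $(q-1)^{2}/6$ and $(q-1)^{2}/3$ are integers. This follows from $q\equiv 1 \pmod 3$, which gives $3\mid (q-1)$ and hence $3\mid (q-1)^{2}$ in both cases; in the odd case $q-1$ is additionally even, so $6\mid (q-1)$ and therefore $6\mid (q-1)^{2}$. With divisibility in hand, the two displays follow by elementary factoring, and assembling the chosen maximum independent sets from distinct components into one independent set of $C_{A}(q)$ is immediate since no edges run between components. I do not anticipate any deeper difficulty, as the essential structural work was already completed in the theorem.
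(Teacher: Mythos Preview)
Your proposal is correct and follows essentially the same approach as the paper: both select a maximum independent set in $C_{A}(q)$ by taking all isolated points together with an optimal independent set from each cyclic component, then apply the reduction $M(q-1,q-3)\geq |I|$. Your version is slightly more explicit in counting the number of cycles and checking the divisibility conditions, but the argument is the same.
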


\begin{proof} For part a) we form an independent set $I$ in $C_{A}(q)$ by taking $3$ independent points in each cycle component of 
of length $6$, together with the set $Y$ of isolated points.  Then $M(q-1, q-3)\geq |Y| + \frac{1}{2}( |C_{A}(q) - Y| ) = q-1 + \frac{1}{2}\big(q(q-1) - (q-1)\big) = (q^2-1)/2$ , as required. 

For part b), we form an independent set $I$ in $C_{A}(q)$ by taking one point from each length $3$ cycle component, together with the set $Y$ of isolated points.  
We then have $M(q-1, q-3)\geq |Y| + \frac{1}{3}( |C_{A}(q) - Y| ) = (q-1)(q+2)/3$.

\end{proof}

The lower bounds in this corollary should be compared to the lower bound $M(q,q-2)\geq q^{2}$ for prime powers $q\not\equiv 2$(mod $3$), derived by 
using permutation polynomials \cite{CCD}.

 \section{The contraction graph for $PGL(2,q)$}

 Let $q$ be a power of a prime.  The permutation group $PGL(2,q)$ is defined as the set of one to one functions 
 $\sigma: GF(q)\cup \{\infty\} \rightarrow GF(q)\cup \{\infty\}$, under the binary operation of composition, given by
\begin{equation} \label{eqn:PGLstandard}
\{~ \sigma(x) = \frac{ax+b}{cx+d} ~:~a,b,c,d \in GF(q), ad \neq bc, x \in GF(q) \cup \{\infty\}~ \}.
\end{equation}

\noindent
Here $\sigma(x)$ is computed by the rules: 
\begin{enumerate} 
\item if $x \in GF(q)$ and $x\neq-(d/c)$, then $\sigma(x) = \frac{ax+b}{cx+d} $, 
\item if $x \in GF(q)$ and $x = -(d/c)$, then $\sigma(x) = \infty$, 
\item if $x = \infty$, and $c \neq 0$, then $\sigma(x) = a/c$, and 
\item if $x = \infty$, and $c = 0$, then $\sigma(x) = \infty$.  
\end{enumerate}

We regard $PGL(2,q)$ as a permutation group acting on the set $GF(q)\cup \{\infty\}$ of size $q+1$ via the one to one map $x\mapsto \sigma(x)$.  
One can show that $|PGL(2,q)| = (q+1)q(q-1)$, and it is well known that 
$PGL(2,q)$ is sharply $3$-transitive in its action on $GF(q)\cup \{\infty\}$ (see \cite{Stin} for a proof).  It is straightforward to verify that 
$hd(PGL(2,q)) = q-1$, and by Theorem \ref{Deza} we have $M(q+1, q-1) = |PGL(2,q)| = (q+1)q(q-1)$.

Take a fixed ordering of 
$GF(q)\cup \{\infty\}$ with $\infty$ as final symbol, say $x_{1}, x_{2}, \cdots, x_{q}, \infty$ where the $x_{i}$ are the distinct elements 
of $GF(q)$.  Then any element $\pi\in PGL(2,q)$ is identified with the length $q+1$ string $\pi(x_{1})\pi(x_{2})\pi(x_{3})\cdots \pi(x_{q})\pi(\infty)$, which 
again we call the {\it image string} of $\pi$.  For any such $\pi\in PGL(2,q)$ the permutation $\pi^{\triangle}$ on 
$GF(q)\cup \{\infty\}$ is defined as in the section introducing contraction, where $F = \infty$ is 
the distinguished element of $GF(q)\cup \{\infty\}$ in that definition.   
As an example, if 
$\pi = a\infty bcde$, then $\pi^{\triangle} = aebcd\infty$, and $\pi^{\triangle}_{-} = aebcd$.    
In the same way, for any subset $R\subset PGL(2,q)$, the sets
$R^{\triangle}$, and $R^{\triangle}_{-}$ are defined as in that section, with $F = \infty$.  Since  $hd(PGL(2,q)) = q-1 = q+1-2$, the image strings of 
any two elements of $PGL(2,q)$ agree in at most two positions.  It follows from Lemma \ref{basicsintro}a that for any $\pi, \sigma\in PGL(2,q)$ we have 
$hd(\pi^{\triangle}, \sigma^{\triangle})\geq hd(\pi, \sigma) - 3 \geq q - 4$.  That is, $\pi^{\triangle}$ and $ \sigma^{\triangle}$ can agree in at most 
$5$ positions; up to 2 occurring from the original $\pi$ and $\sigma$, and up to 3 more occurring from the $\pi^{\triangle}$ and $\sigma^{\triangle}$ operation.

As noted earlier, lower bounds for $M(q,q-3)$ and $M(q,q-4)$ when $q\not\equiv 1($mod $3)$ based on permutation polynomials are known \cite{CCD}. 
Thus in this section, we restrict ourselves to the case $q\equiv 1($mod $3)$, $q$ an odd prime power, 
where such bounds are not known.
For technical reasons we take $q\geq 13$.

The plan will be similar in some respects 
to the one we used in the previous section.  That is, for a certain set $I\subset PGL(2,q)$ we will find a permutation array 
$I^{\triangle}_{-} \subset PGL(2,q)^{\triangle}_{-} $ on $q$ symbols with $hd(I^{\triangle}_{-})\geq q-3 $, thus obtaining the lower bound 
on $M(q,q-3)\geq |I^{\triangle}_{-} |$.  This set $I$ will be an independent set in the contraction graph $C_{PGL(2,q)}$ for $PGL(2,q)$, which we abbreviate by $C_{P}(q)$.

Since $hd(PGL(2,q)) = q-1$, $C_{P}(q)$ is given by $V(C_{P}(q)) = PGL(2,q)$, and $E(C_{P}(q)) = \{ \pi\sigma: hd(\pi^{\triangle}, \sigma^{\triangle}) =  q - 4 \}$.  
So edges $\pi\sigma$ of $C_{P}(q)$ correspond to pairs $\pi, \sigma\in PGL(2,q)$ for which $hd(\pi^{\triangle}, \sigma^{\triangle})$ achieves its 
least possible value of $q-4$, occurring when $\pi^{\triangle}$ and $\sigma^{\triangle}$ agree in $5$ positions, so consequently $hd(\pi^{\triangle}, \sigma^{\triangle}) = hd(\pi, \sigma) - 3$.  
Thus a set $I\subseteq V(C_{P}(q))$ is independent in $C_{P}(q)$ if and only if it satisfies $hd(I^{\triangle})\geq q-3$.  By 
Lemma \ref{basicsintro}c, we get $hd(I^{\triangle}_{-}) = hd(I^{\triangle})\geq q-3$, while $|I^{\triangle}_{-}| = |I^{\triangle}| = |I|$, 
with the last equality following from $hd(I^{\triangle}) = q-3 > 3$ since $q\geq 13$.         
 
We are thus reduced to finding an independent set $I$ in $C_{P}(q)$, from which $M(q, q-3)\geq |I|$ follows.          
 
To do this, it will be useful to represent functions in $PGL(2,q)$ in a form different than the standard $\frac{ax+b}{cx+d}$ form. 

\begin{definition} Fix a prime power $q$.

\noindent {\bf1.} Let $K, r, i\in GF(q)$ with $r\ne 0$.  Define the function $f:GF(q)\cup\{\infty\}\rightarrow GF(q)\cup\{\infty\}$ by 
$f(x) = K + \frac{r}{x-i}$ for $x\notin \{ i, \infty \}$, while $f(\infty) = K$ and $f(i) = \infty$. 

\smallskip

\noindent {\bf2.} Let $P = \{ K + \frac{r}{x-i}: K,r,i\in GF(q), r\ne 0 \}$ be the 
set of all functions defined in 1.   

\smallskip

\noindent{\bf3.} Let $N\subset PGL(2,q)$ be given by $N = \{ \pi\in PGL(2,q): \pi(x) = \frac{ax+b}{cx+d}, c\ne 0 \}$.   

\end{definition}

We will now see that $P$ is the same set of functions as $N$.

\begin{lemma} \label{format} Let  the map $\alpha: N\rightarrow P$ be defined as follows.  For any $\pi\in N$ with $\pi(x) = \frac{ax+b}{cx+d}$, let 
$\alpha(\pi)\in P$ be given by  $\alpha(\pi)(x) = \frac{a}{c} + \frac{\frac{bc-ad}{c^{2}}}{x + \frac{d}{c}}.$ Then 

\noindent {\bf a)} $\pi$ and $\alpha(\pi)$ are the same function on $GF(q)\cup\{\infty\}$.

\noindent {\bf b)} $|P| = |N| = q^{2}(q-1)$.

\noindent {\bf c)} The map $\alpha$ is one to one and onto.

\end{lemma}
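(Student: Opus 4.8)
The plan is to treat part \textbf{a)} as the crux and derive \textbf{b)} and \textbf{c)} as quick consequences, since $\alpha$ is essentially just a re-parametrization of the same function. For \textbf{a)}, I would start from $\pi(x) = \frac{ax+b}{cx+d}$ with $c \neq 0$ and carry out the division
\[
\frac{ax+b}{cx+d} = \frac{a}{c} + \frac{bc-ad}{c(cx+d)} = \frac{a}{c} + \frac{(bc-ad)/c^2}{x + d/c},
\]
which is exactly the formula defining $\alpha(\pi)$, with $K = a/c$, $r = (bc-ad)/c^2$, and $i = -d/c$. Since $ad \neq bc$ we have $r \neq 0$, so $\alpha(\pi)$ genuinely lies in $P$. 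This identity shows $\pi$ and $\alpha(\pi)$ agree at every $x \in GF(q)$ with $cx + d \neq 0$, and it then remains only to check the two exceptional arguments. At $x = \infty$, rule 3 gives $\pi(\infty) = a/c$ while the definition of $P$ gives $\alpha(\pi)(\infty) = K = a/c$; at $x = -d/c = i$, rule 2 gives $\pi(i) = \infty$ while the definition of $P$ gives $\alpha(\pi)(i) = \infty$. Hence $\pi$ and $\alpha(\pi)$ coincide everywhere.

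Before anything else I would address that $\alpha$ is well defined on $PGL(2,q)$: the tuple $(a,b,c,d)$ representing $\pi$ is only determined up to a common nonzero scalar $\lambda$, but each of $a/c$, $(bc-ad)/c^2$, and $d/c$ is invariant under $(a,b,c,d) \mapsto (\lambda a, \lambda b, \lambda c, \lambda d)$, so $\alpha(\pi)$ does not depend on the chosen representative. This scalar-invariance check, together with the handling of the exceptional values, is really the only delicate bookkeeping in the whole lemma.

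With \textbf{a)} in hand, \textbf{c)} is immediate. Injectivity follows at once, since $\alpha(\pi) = \pi$ as functions forces $\alpha(\pi_1) = \alpha(\pi_2) \Rightarrow \pi_1 = \pi_2$. For surjectivity I would take an arbitrary $f(x) = K + \frac{r}{x-i} \in P$ and rewrite it as $f(x) = \frac{Kx + (r - Ki)}{x - i}$, a function of the standard form with $c = 1 \neq 0$ and $ad - bc = -r \neq 0$; thus $f \in N$, and substituting $(a,b,c,d) = (K, r-Ki, 1, -i)$ into the formula for $\alpha$ recovers $\alpha(f) = f$. This shows $\alpha$ is onto, and incidentally that $P = N$ as sets of functions.

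Finally, for \textbf{b)} I would count $|P|$ through the parametrization $(K, r, i) \mapsto f$ with $K, i \in GF(q)$ and $r \in GF(q) \setminus \{0\}$. The key point is that this parametrization is injective: from $f$ one recovers $i$ as the unique preimage of $\infty$, then $K = f(\infty)$, and then $r$ from the value of $f$ at any $x \neq i$. Hence $|P| = q \cdot q \cdot (q-1) = q^2(q-1)$, and $|N| = |P|$ by the bijection established in \textbf{c)}. As a consistency check one can also compute $|N| = |PGL(2,q)| - |AGL(1,q)| = (q+1)q(q-1) - q(q-1) = q^2(q-1)$, since the elements with $c = 0$ are precisely the affine maps fixing $\infty$. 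I do not expect a genuine obstacle here; the only real care is the correct treatment of the values at $\infty$ and $i$ via the piecewise rules and the scalar-invariance that makes $\alpha$ well defined.
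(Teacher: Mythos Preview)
Your argument is correct and follows essentially the same route as the paper: the algebraic division for part~\textbf{a)}, the explicit inverse $\pi(x)=\frac{Kx+(r-Ki)}{x-i}$ for surjectivity, and the counts $|P|=q^2(q-1)$ and $|N|=|PGL(2,q)|-|AGL(1,q)|$ all appear there verbatim. The only differences are cosmetic: you derive \textbf{c)} before \textbf{b)} and then get $|N|=|P|$ from the bijection (the paper computes both sizes independently first and deduces bijectivity from $|N|=|P|$), and you add the scalar-invariance check for well-definedness and the injectivity of the $(K,r,i)$ parametrization, both of which the paper leaves implicit.
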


\begin{proof} For a), straightforward manipulation shows that for $x\ne -\frac{d}{c}$ we have $\pi(x) = \frac{a}{c} + \frac{\frac{bc-ad}{c^{2}}}{x + \frac{d}{c}} = \alpha(\pi)(x)$.  
Also by definition $\alpha(\pi)(\infty) = \frac{a}{c} = \pi(\infty)$ and 
$\alpha(\pi)(-\frac{d}{c}) = \infty = \pi(-\frac{d}{c})$. so $\pi$ and $\alpha(\pi)$ are the same function. 

Consider b).  Clearly $|P| =  q^{2}(q-1)$ since there are $q-1$ 
choices for $r$, and $q$ choices for each of $K$ and $i$, independent of each other.  
To show  $|N| = q^{2}(q-1)$, observe first that for any $\pi\in PGL(2,q)$ with $\pi(x) = \frac{ax+b}{cx+d}$ we have 
$c = 0 \Leftrightarrow \pi(\infty) = \infty$.  The $\Rightarrow$ direction is immediate by definition.  
To see $\Leftarrow$, assume $c\ne 0$.  Then $\pi(\infty) = \frac{a}{c} \ne \infty$, completing the proof of 
the observation.  Next we have $\pi(\infty) = \infty  \Leftrightarrow \pi(x) = Ax + B\in AGL(1,q)$ for all $x$ for suitable 
$A\ne 0,B\in GF(q)$, by definition of computing in $PGL(2,q)$.  Thus we have 
$|N| = |PGL(2,q)| - |AGL(2,q)| = (q+1)q(q-1) - q(q-1) = q^{2}(q-1)$.

Part c) is immediate from parts a) and b), since any two distinct elements of $N$ are distinct as functions.  As an alternative (constructive) proof, 
let $f(x) = K + \frac{r}{x-i}\in P$ be given.  Then for $\pi(x) = \frac{Kx + r-iK}{x - i}\in N$ we have $\alpha(\pi) = f$.  Thus $\alpha$ is onto, and since 
$|P| = |N|$, $\alpha$ is also one to one.

\end{proof}

We now see how the above observations, together with results which come later, reduce the study of 
$C_{P}(q)$ to the set $P$ of permutations..  It was shown above that 
for $\pi\in PGL(2,q)$, we have $\pi(\infty) = \infty \Leftrightarrow c = 0$.  By condition (\ref{contractionedges}) (with $\infty = F$) we see that $\pi(\infty) = \infty$ 
implies that $\pi$ is an isolated point in $C_{P}(q)$, and we will see later that for $C_{P}(q)$ the converse is also true.  
So to study the structure of $C_{P}(q)$ apart from its isolated points, we 
are reduced to studying its subgraph induced by the permutations in $N$.  
By the bijection $\alpha: N\leftrightarrow P$, under which $\pi\in N$ and $\alpha(\pi)\in P$ are the 
same permutation on $GF(q)\cup \{\infty\}$, we are then reduced to studying $P$.

\begin{figure}[htb]
\begin{center}
\includegraphics{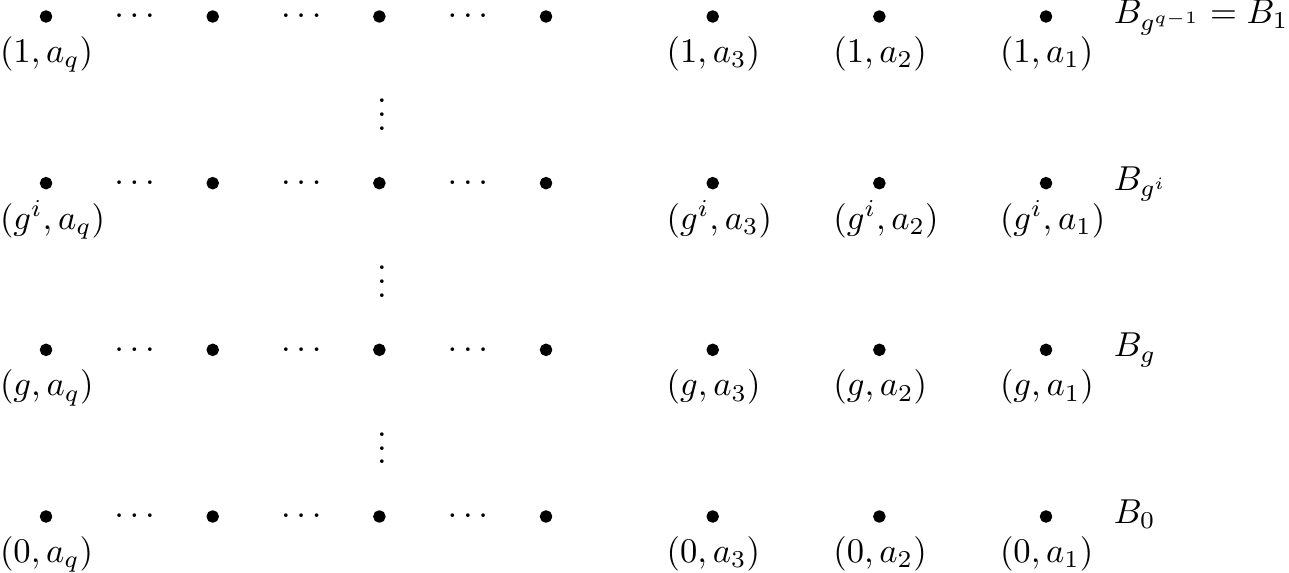}
\end{center}
\caption{The graph $P_1$, partitioned into levels $B_0$ and $B_{g^i},1\le i\le q-1$.}
\label{f1}
\end{figure}

 \begin{lemma} \label{associates1} Let $\pi, \sigma\in P$ with $\pi(x) = a + \frac{r}{x-i}$, $\sigma(x) = b + \frac{s}{x-j}$ with $r,s\ne 0$.  Then 
$hd(\pi^{\triangle}, \sigma^{\triangle}) = hd(\pi,\sigma) - 3 \Longleftrightarrow (b-a)(j-i) = r$ and $r = s$ .

\end{lemma}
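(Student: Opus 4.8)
The plan is to translate the distance condition $hd(\pi^{\triangle},\sigma^{\triangle}) = hd(\pi,\sigma)-3$ into arithmetic in $GF(q)$ using the $P$-representation of $\pi$ and $\sigma$. First I would record the values on which the whole argument turns: from $\pi(x)=a+\frac{r}{x-i}$ one reads off $\pi(\infty)=a$ and $\pi^{-1}(\infty)=i$, and from $\sigma(x)=b+\frac{s}{x-j}$ one reads off $\sigma(\infty)=b$ and $\sigma^{-1}(\infty)=j$. Since $\pi,\sigma\in P=N$ (Lemma \ref{format}), both send $\infty$ into $GF(q)$, so the first two clauses of (\ref{contractionedges}), namely $\pi(\infty)\ne\infty$ and $\sigma(\infty)\ne\infty$, hold automatically; the content of the lemma is therefore the evaluation of the two remaining clauses $\sigma(\pi^{-1}(\infty))=\pi(\infty)$ and $\pi(\sigma^{-1}(\infty))=\sigma(\infty)$ in field terms.

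By Lemma \ref{basicsintro}a the only coordinates at which the agreement-status of $\pi^{\triangle},\sigma^{\triangle}$ can differ from that of $\pi,\sigma$ are $\infty$, $\pi^{-1}(\infty)=i$, and $\sigma^{-1}(\infty)=j$, and the drop of $3$ is achieved exactly when all three of these coordinates switch from disagreement to agreement. I would establish the equivalence by a direct coordinate count at $\infty,i,j$ under the assumption $i\ne j$ (so these three coordinates are distinct and $\sigma(i),\pi(j)$ are genuinely finite): at $\infty$ both images become $\infty$ after contraction, so this coordinate becomes an agreement, and it was a disagreement beforehand precisely when $a\ne b$; at $i$ the contracted value of $\pi$ is $\pi(\infty)=a$ while $\sigma^{\triangle}(i)=\sigma(i)=b+\frac{s}{i-j}$, giving agreement iff $s=(b-a)(j-i)$; symmetrically, at $j$ one gets agreement iff $r=(b-a)(j-i)$. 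Because each of the three coordinates contributes at most one new agreement, a total drop of $3$ forces all three agreements simultaneously, which yields $a\ne b$ together with $r=s=(b-a)(j-i)$; conversely these equalities produce the three agreements and hence the drop of $3$.

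The one point that needs care — the main (small) obstacle — is the degenerate configuration $i=j$, where $\pi^{-1}(\infty)$ and $\sigma^{-1}(\infty)$ collapse to a single coordinate. Then only the two coordinates $\infty$ and $i$ are touched by the contraction, so $hd$ can fall by at most $2$ and the distance condition can never hold; simultaneously the right-hand side fails, since $(b-a)(j-i)=0\ne r$ as $r\ne 0$. Disposing of this case separately is exactly what licenses the assumption $i\ne j$ in the main count. Finally I would observe that the auxiliary hypothesis $a\ne b$ extracted above is not an extra requirement: once $r=(b-a)(j-i)$ with $r\ne 0$ and $i\ne j$, necessarily $b-a\ne 0$. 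This closes the equivalence, giving $hd(\pi^{\triangle},\sigma^{\triangle})=hd(\pi,\sigma)-3 \Longleftrightarrow (b-a)(j-i)=r$ and $r=s$.
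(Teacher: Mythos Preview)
Your proof is correct and follows essentially the same approach as the paper: identify the three coordinates $\infty,\ i=\pi^{-1}(\infty),\ j=\sigma^{-1}(\infty)$ affected by contraction and compute the agreement conditions $\sigma(i)=a$ and $\pi(j)=b$ at each, yielding $(b-a)(j-i)=r=s$. Your treatment is in fact a bit more careful than the paper's, since you explicitly dispose of the degenerate case $i=j$ and verify the auxiliary condition $a\ne b$, points the paper leaves implicit.
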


\begin{proof} $\Longrightarrow$ : By assumption we have $\pi(\infty) = a$ and $\pi(i) = \infty$, together with $\sigma(j) = \infty$ and $\sigma(\infty) = b$. 
By Lemma \ref{basicsintro}a the only coordinates of either $\pi$ or $\sigma$ whose values are affected by the $\triangle$ operation are the $3$ coordinates 
$\pi^{-1}(\infty) = i, \sigma^{-1}(\infty) = j$, and $\infty$.  So the assumption $hd(\pi^{\triangle}, \sigma^{\triangle}) = hd(\pi,\sigma) - 3$ implies that 
$\sigma(i) = a$ and $\pi(j) = b$.  Thus we get   
$\pi(j) = a + \frac{r}{j-i} = b$, yielding $(b-a)(j-i) = r$ as required.  Now interchanging the roles of $\pi$ and $\sigma$ in this argument, specifically, 
using $\sigma(i) = b + \frac{s}{i-j} = a$, we get $(a-b)(i-j) = s$, so also $r = s$.  

$\Longleftarrow$ : Again by assumption we have $\pi(\infty) = a$, $\sigma(\infty) = b$, $\pi(i) = \infty$, $\sigma(j) = \infty$, and $(b-a)(j-i) = r$ .  To prove 
$hd(\pi^{\triangle}, \sigma^{\triangle}) = hd(\pi,\sigma) - 3$, it remains only to show that $\pi(j) = b$ and $\sigma(i) = a$. For simplicity we let $r = s = 1$, since the argument 
does not depend on $r = s$.  Solving for $b$ in $(b-a)(j-i) = 1$ we get $b = \frac{1}{j-i} + a = \pi(j)$.  Solving for $a$ we 
get $a = b - \frac{1}{j-i} = b + \frac{1}{i - j} = \sigma(i)$, as required.       \end{proof}

\medskip

\begin{lemma} \label{associates2}  Let $q = p^{m}$, where $p$ is an odd prime, with $q\equiv 1($mod $3)$, $q\geq 13$.  
Let $\pi, \sigma\in P$, say with $\pi(x) = a + \frac{r}{x-i}$, $\sigma(x) = b + \frac{s}{x-j}$, with $r,s\ne 0$.  Then 
$hd(\pi^{\triangle}, \sigma^{\triangle}) = hd(\pi,\sigma) - 3 \Longleftrightarrow \pi\sigma\in E(C_{P}(q))$.

 \end{lemma}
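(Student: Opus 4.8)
The plan is to prove the two implications separately; the reverse implication is the easy combinatorial one, while the forward implication carries the real content and reduces to a root count for a quadratic over $GF(q)$. Throughout I will use two facts already available: that $hd(PGL(2,q)) = q-1$, so any two distinct elements of $PGL(2,q)$ are at Hamming distance at least $q-1$; and Lemma \ref{basicsintro}a, which gives $hd(\pi^{\triangle}, \sigma^{\triangle}) \geq hd(\pi,\sigma) - 3$ in general.

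For the direction $(\Leftarrow)$, suppose $\pi\sigma \in E(C_{P}(q))$. By the definition of $C_{P}(q)$ this means $hd(\pi^{\triangle}, \sigma^{\triangle}) = q-4$ with $\pi \ne \sigma$. Lemma \ref{basicsintro}a then yields $q - 4 \geq hd(\pi,\sigma) - 3$, i.e. $hd(\pi,\sigma) \leq q-1$, and combining this with $hd(\pi,\sigma) \geq q-1$ forces $hd(\pi,\sigma) = q-1$. Hence $hd(\pi^{\triangle}, \sigma^{\triangle}) = q-4 = hd(\pi,\sigma) - 3$, as required; this direction uses nothing special about the set $P$.

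For the direction $(\Rightarrow)$, the key observation is that it suffices to prove $hd(\pi,\sigma) = q-1$, since then $hd(\pi^{\triangle}, \sigma^{\triangle}) = hd(\pi,\sigma) - 3 = q-4$, which is exactly the defining condition for an edge. To establish $hd(\pi,\sigma) = q-1$ I would first invoke Lemma \ref{associates1}, which translates the hypothesis $hd(\pi^{\triangle}, \sigma^{\triangle}) = hd(\pi,\sigma) - 3$ into the algebraic conditions $r = s$ and $(b-a)(j-i) = r$. Since $r \ne 0$, these force $a \ne b$ and $i \ne j$, so $\pi$ and $\sigma$ disagree at $\infty$ (values $a$ versus $b$), at $i$ (values $\infty$ versus $b + \frac{r}{i-j} = a$), and at $j$ (values $a + \frac{r}{j-i} = b$ versus $\infty$). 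It then remains only to count the agreements on $GF(q) \setminus \{i,j\}$.

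The heart of the argument is this count. Setting $\pi(x) = \sigma(x)$ for $x \in GF(q)\setminus\{i,j\}$ and clearing denominators gives $r(i-j) = (b-a)(x-i)(x-j)$; substituting $r = (b-a)(j-i)$ and dividing by $b - a \ne 0$ reduces this to $(x-i)(x-j) = -(j-i)^{2}$. With the bijective substitution $v = (x-i)/(j-i)$ this becomes $v^{2} - v + 1 = 0$, and since $v=0$ and $v=1$ are not roots, each root corresponds to a genuine point of $GF(q)\setminus\{i,j\}$. Because $q \equiv 1 \pmod 3$ there is a primitive cube root of unity $\omega \in GF(q)$, and $-\omega, -\omega^{2}$ are two distinct roots of $v^{2}-v+1 = 0$ (equivalently, one negates the roots supplied by Corollary \ref{quadroots}a for $t^{2}+t+1=0$). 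Thus $\pi$ and $\sigma$ agree at exactly two points, giving $hd(\pi,\sigma) = (q+1) - 2 = q-1$ and completing the proof. I expect the main obstacle to be precisely this agreement count, and in particular verifying that the quadratic has exactly two roots in $GF(q)$: this is where $q \equiv 1 \pmod 3$ is used, with char $\ne 3$ ensuring the roots are distinct. The restriction to odd $q$ and $q \geq 13$ is inherited from the standing assumptions of the section and is not otherwise needed here.
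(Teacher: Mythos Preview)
Your proof is correct and follows essentially the same route as the paper: the $(\Leftarrow)$ direction is identical, and for $(\Rightarrow)$ both you and the paper reduce to showing $hd(\pi,\sigma)=q-1$ by invoking Lemma~\ref{associates1} and then counting solutions of $\pi(x)=\sigma(x)$ via a quadratic over $GF(q)$. The only cosmetic difference is that the paper works directly with $x^{2}-(i+j)x+ij+(i-j)^{2}=0$ and appeals to Corollary~\ref{quadroots}b (the quadratic formula with $\sqrt{-3}$), whereas you first normalize with $v=(x-i)/(j-i)$ to reach $v^{2}-v+1=0$ and then use primitive cube roots of unity; these are trivially equivalent, and your explicit check that $v\in\{0,1\}$ are not roots is a nice touch the paper handles more implicitly.
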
 
 
 \begin{proof}$\Longleftarrow$: By definition of edges in $C_{P}(q)$ and Lemma \ref{basicsintro}a we have $q - 4 = hd(\pi^{\triangle}, \sigma^{\triangle})\geq hd(\pi, \sigma) -3$.    
 Now since $q-1\le hd(\pi, \sigma)\le q+1$, equality is forced together with $hd(\pi, \sigma) = q-1$.  This yields $hd(\pi^{\triangle}, \sigma^{\triangle}) = hd(\pi,\sigma) - 3$.   
 
 \noindent $\Longrightarrow$ : By the assumption $hd(\pi^{\triangle}, \sigma^{\triangle}) = hd(\pi,\sigma) - 3$ and $hd(\pi, \sigma)\geq q-1$ we are reduced to showing that 
 $hd(\pi, \sigma) = q-1$; that is, that $\pi$ and $\sigma$ already agree in two coordinates.
 
 By assumption and Lemma \ref{associates1} we have $r=s$, so write
 $\pi(x) = a + \frac{r}{x-i}$ and $\sigma(x) = b + \frac{r}{x-j}$, for $a, b, i, j, k \in GF(q)$ with $r\ne 0$.  Note also 
 $i\ne j$, since otherwise by Lemma \ref{associates1} we get $r=0$, a contradiction.

 We now derive a quadratic equation over $GF(q)$ whose distinct roots are the coordinates of agreement between $\pi$ and $\sigma$.
 Since $hd(\pi^{\triangle}, \sigma^{\triangle}) = hd(\pi,\sigma) - 3$, by Lemma \ref{associates1} 
 we have $(b - a)(j - i) = r$.  Thus $b = \frac{r}{j - i} + a$.  Now we 
 set $\pi(x) = \sigma(x)$ to find the possible coordinates $x$ at which $\pi$  and $\sigma$ agree, understanding that $x$ can be neither $i$ nor $j$ 
since $\pi$ and $\sigma$ can have no agreements in any of the coordinates $i = \pi^{-1}(\infty),j = \sigma^{-1}(\infty)$, or $\infty$ by Lemma \ref{basicsintro}a.
Substituting $\frac{r}{j - i} + a$ for $b$ and simplifying 
we obtain $\frac{1}{x-i} - \frac{1}{x-j} = \frac{1}{j-i}.$  Hence $\frac{i - j}{(x - i)(x - j)} = \frac{1}{j-i}$, and we get 
the quadratic $x^{2} - (i + j)x + ij + (i - j)^{2} = 0.$  By Corollary \ref{quadroots}b there are two 
 distant roots to this equation, giving the two coordinates of agreement for $\pi$ and $\sigma$ as follows; 
  $x_{1} =  \frac{1}{2}[i(1+\sqrt{-3}) + j(1-\sqrt{-3})]$, and $x_{2} =  \frac{1}{2}[i(1-\sqrt{-3}) + j(1+\sqrt{-3})]$.  
  
 Hence by our reduction at the beginning of the proof it follows that $\pi\sigma\in E(C_{P}(q))$, as required.        \end{proof}

 The preceding two Lemmas yield the following.

\begin{corollary}\label{edges fields} Let $q = p^{m}$, where $p$ is an odd prime, with $q\equiv 1($mod $3)$, $q\geq 13$.

\smallskip
  
\noindent {\textbf a)} Let $\pi, \sigma\in P$, say with $\pi(x) = a + \frac{r}{x-i}$, $\sigma(x) = b + \frac{s}{x-j}$, $r,s\ne 0$. Then
$\pi\sigma\in E(C_{P}(q)) \Longleftrightarrow r = s$ and $(b-a)(j-i) = r$.

\medskip

\noindent {\textbf b)} $\pi\in PGL(2,q)$ is an isolated point in 
$C_{P}(q) \Longleftrightarrow \pi(\infty) = \infty$.

\end{corollary}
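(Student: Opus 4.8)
The plan is to obtain part a) by directly chaining the two preceding lemmas, and to obtain part b) by combining the general isolated-point observation from the introduction with the explicit neighbor construction that part a) makes available.

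For part a) I would simply compose two biconditionals. Lemma \ref{associates2} gives, for $\pi,\sigma\in P$ with $r,s\ne 0$, the equivalence $\pi\sigma\in E(C_{P}(q)) \Longleftrightarrow hd(\pi^{\triangle},\sigma^{\triangle}) = hd(\pi,\sigma)-3$, while Lemma \ref{associates1} gives $hd(\pi^{\triangle},\sigma^{\triangle}) = hd(\pi,\sigma)-3 \Longleftrightarrow r=s \text{ and } (b-a)(j-i)=r$. Transitivity of $\Longleftrightarrow$ yields $\pi\sigma\in E(C_{P}(q)) \Longleftrightarrow r=s \text{ and } (b-a)(j-i)=r$, which is exactly part a). Nothing further is needed here.

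For part b) the $\Longleftarrow$ direction is already in hand: the remark following condition (\ref{contractionedges}) shows that any $\pi$ with $\pi(F)=F$ is an isolated point of the contraction graph, and here $F=\infty$, so $\pi(\infty)=\infty$ forces $\pi$ to be isolated. For the $\Longrightarrow$ direction I would argue the contrapositive, exhibiting a neighbor whenever $\pi(\infty)\ne\infty$. Since $\pi(\infty)\ne\infty$ forces $c\ne 0$ (established in the proof of Lemma \ref{format}b), we have $\pi\in N$, and applying the bijection $\alpha$ of Lemma \ref{format} we may regard $\pi$ as an element of $P$, say $\pi(x)=a+\frac{r}{x-i}$ with $r\ne 0$. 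I would then pick any $j\in GF(q)$ with $j\ne i$, set $b=a+\frac{r}{j-i}$, and form $\sigma(x)=b+\frac{r}{x-j}\in P$. By construction $r=s$ and $(b-a)(j-i)=r$, so part a) gives $\pi\sigma\in E(C_{P}(q))$, while $\sigma\ne\pi$ because the two functions have distinct poles $j\ne i$. Hence $\pi$ is not isolated, completing the contrapositive.

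The main obstacle is purely bookkeeping rather than conceptual: one must confirm that the constructed $\sigma$ is a bona fide element of $P$ distinct from $\pi$ (which is why the pole $j$ must be chosen different from $i$, possible since $q\ge 13$) and that the edge criterion of part a) is met by the chosen value of $b$. Once the representation of $P$ and the criterion from part a) are fixed, these verifications are immediate, and the corollary follows.
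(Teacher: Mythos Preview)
Your proof is correct. Part a) and the $\Longleftarrow$ direction of part b) match the paper exactly. For the $\Longrightarrow$ direction of part b) you take a slightly different (and arguably cleaner) route: the paper constructs the neighbor $\sigma$ by invoking sharp $3$-transitivity of $PGL(2,q)$ to send the three points $j,i,\infty$ to $\infty,\pi(\infty),\pi(j)$, then checks directly that $hd(\pi^{\triangle},\sigma^{\triangle})=hd(\pi,\sigma)-3$ and appeals to Lemma~\ref{associates2}. You instead write $\pi$ in its $P$-form and build $\sigma\in P$ by hand so that the algebraic edge criterion of part a) is satisfied. Your approach has the advantage of reusing part a) rather than going back to Lemma~\ref{associates2}, and avoids the (mild) need to verify that the $\sigma$ produced by $3$-transitivity actually lies in $P$; the paper's approach has the advantage of not requiring the $P$-representation of $\pi$ at all. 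Both are short and the difference is stylistic.
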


\begin{proof} Part a) follows immediately from Lemmas \ref{associates1} and \ref{associates2}.

For part b), suppose first that $\pi(\infty) = \infty$.  Then immediately $\pi$ is isolated in $C_{P}(q)$ by the equivalence (\ref{contractionedges}) (with $\infty = F$)  applicable to 
any contraction graph. 

Conversely, suppose to the contrary that $\pi$ is isolated in $C_{P}(q)$ and $\pi(\infty) = x\ne \infty$.  Let $i = \pi^{-1}(\infty)$, and let $j$ be any coordinate with $j\notin \{ i, \infty \}$, 
and let $\pi(j) = y$.  Then by sharp $3$-transitivity of $PGL(2,q)$ we can find an element $\sigma\in PGL(2,q)$ satisfying $\sigma(j) = \infty$, $\sigma(i) = x$, 
and $\sigma(\infty) = y$.  Then we get $hd(\sigma^{\triangle}, \pi^{\triangle}) =  hd(\sigma, \pi) - 3$.  So by Lemma \ref{associates2} we have 
$\pi\sigma\in E(C_{P}(q))$, contradicting $\pi$ being isolated.      \end{proof}

The next two theorems, which use the preceding Corollary, tell us more about $C_{P}(q)$.  For $S\subset C_{P}(q)$, recall that $[S]$ is the subgraph 
of $C_{P}(q)$ induced by $S$.  
When $r$ is fixed by context, we denote a vertex $\pi\in C_{P}(q)$, $\pi\in P$, with $\pi(x) = a + \frac{r}{x-i}$, by the abbreviation $(i,a)$. 

Consider the partition of $P$ given by $P = \cup_{r\ne 0}P_{r}$, where for 
  $r\in GF(q)$ with $r\ne 0$, 
 $P_{r} = \{ a + \frac{r}{x-i} : a, i \in GF(q) \}$, so $|P_{r}| = q^{2}$.  Further consider the partition of $P_{r}$ given by $P_{r} = \cup_{i\in GF(q)}B_{i}(r)$, 
 where $B_{i}(r) = \{ a + \frac{r}{x-i} : a \in GF(q) \}$.

\begin{theorem}\label{first structure} Let $q = p^{m}$, where $p$ is an odd prime, with $q\equiv 1($mod $3)$, $q\geq 13$.  Then the following hold in the graph $C_{P}(q)$.

\noindent {\textbf a)} For any $r\ne s$, $r,s\ne 0$, we have $[P_{r}] \cong [P_{s}]$.

\noindent {\textbf b)} For any $r\ne 0$ and $i\ne j$, $[B_{i}(r)\cup B_{j}(r)]$ is a perfect matching, which matches $B_{i}(r)$ to $B_{j}(r)$. 

\noindent {\textbf c)} For any $r\ne 0$, the subgraph $[P_{r}]$ is regular of degree $q-1$.

\noindent {\textbf d)} Let $v\in C_{P}(q)$ be a non isolated point, and $N(v)$ the set of neighbors of $v$ in $C_{P}(q)$.  Then 
$[N(v)]$ is a disjoint union of cycles.

\end{theorem}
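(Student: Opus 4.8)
The whole analysis hinges on the edge criterion of Corollary~\ref{edges fields}a: writing a vertex of $P_r$ in the abbreviated form $(i,a)$ (meaning $\pi(x)=a+\frac{r}{x-i}$), two vertices $(i,a),(j,b)\in P_r$ are adjacent in $C_P(q)$ exactly when $(b-a)(j-i)=r$, and no edge joins $P_r$ to $P_s$ when $r\neq s$. Hence the induced subgraph $[P_r]$ consists precisely of the $P_r$-internal edges, and I may work one value of $r$ at a time. For part a) the plan is to exhibit the bijection $\phi\colon P_r\to P_s$ given by $\phi(i,a)=(i,\frac{s}{r}a)$: scaling the second coordinate by $s/r$ turns the condition $(b-a)(j-i)=r$ into $(\frac{s}{r}b-\frac{s}{r}a)(j-i)=s$, so $\phi$ preserves adjacency in both directions and is the desired isomorphism.

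For part b), fix $r$ and $i\neq j$. First, $(i,a)\sim(i,a')$ would force $(a'-a)\cdot 0=r$, impossible since $r\neq0$, so each $B_i(r)$ is independent. Across the two blocks, $(i,a)\sim(j,b)$ iff $b=a+\frac{r}{j-i}$, which for each $a$ determines a unique $b$ and conversely; thus $[B_i(r)\cup B_j(r)]$ is a perfect matching of $B_i(r)$ onto $B_j(r)$. Part c) then follows immediately: every neighbor of a fixed $(i,a)$ lies in $P_r$, none lies in its own block $B_i(r)$, and exactly one lies in each of the $q-1$ remaining blocks $B_j(r)$ with $j\neq i$, so $[P_r]$ is regular of degree $q-1$.

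Part d) is the substantive one. A non-isolated $v$ lies in $P$ by Corollary~\ref{edges fields}b, say $v=(i_0,a_0)\in P_r$, and all of $N(v)$ lies in $P_r$. The two maps $(i,a)\mapsto(i+c,a)$ and $(i,a)\mapsto(i,a+c)$ each preserve the defining equation $(b-a)(j-i)=r$, hence are automorphisms of $[P_r]$; I would note this explicitly and use them to normalize to $v=(0,0)$, so that $N(v)=\{(j,\,r/j):j\neq0\}$, parametrized bijectively by $j\in GF(q)\setminus\{0\}$. The goal is to show $[N(v)]$ is $2$-regular, which forces it to be a disjoint union of cycles. Writing $w_j=(j,r/j)$ and applying the edge criterion to $w_j,w_k$ (both nonzero, $j\neq k$) reduces, after clearing denominators, to $k^2-jk+j^2=0$, i.e. $(k/j)^2-(k/j)+1=0$.

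I expect the only real obstacle to be verifying that the quadratic $t^2-t+1=0$ splits into two \emph{distinct} roots over $GF(q)$, neither equal to $1$. The roots are the primitive sixth roots of unity, equivalently the negatives of the roots of $t^2+t+1=0$ from Lemma~\ref{roots}; they lie in $GF(q)$ because $q$ odd together with $q\equiv1\pmod3$ gives $6\mid q-1$, they are distinct since the discriminant $-3\neq0$ in characteristic coprime to $3$, and $1$ is not a root since $1-1+1=1\neq0$. Granting this, each $w_j$ has exactly two neighbors in $N(v)$, namely $w_k$ with $k/j$ equal to one of the two roots, so $[N(v)]$ is $2$-regular and therefore a disjoint union of cycles (in fact each of length $6$, the common multiplicative order of the two roots). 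The algebra reducing the edge condition to the quadratic is routine; the crux is purely the root structure of $t^2-t+1$.
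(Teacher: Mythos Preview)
Your proof is correct and follows the same overall strategy as the paper: use the edge criterion of Corollary~\ref{edges fields}a throughout, exhibit an explicit isomorphism for a), read off the perfect matching for b), deduce c), and for d) show $2$-regularity of $[N(v)]$ by reducing the common-neighbor condition to a quadratic with two distinct roots in $GF(q)$.

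The differences are minor but worth noting. For a) you scale the second coordinate, $(i,a)\mapsto(i,\tfrac{s}{r}a)$, while the paper scales the first, $(i,a)\mapsto(ri,a)$ (going via $P_1$); both work equally well. The more interesting divergence is in d). The paper keeps $v=(i,a)$ general and, for a neighbor $(j,b)$, solves the system $(c-a)(k-i)=1$, $(b-c)(j-k)=1$, $(b-a)(j-i)=1$ for the common neighbor $(k,c)$, arriving at the quadratic $k^{2}-(i+j)k+ij+(j-i)^{2}=0$ and invoking Corollary~\ref{quadroots}b. Your normalization $v=(0,0)$ via the translation automorphisms $(i,a)\mapsto(i+c,a)$ and $(i,a)\mapsto(i,a+c)$ collapses this to the homogeneous form $(k/j)^{2}-(k/j)+1=0$, which makes the root count transparent (primitive sixth roots of unity, present since $6\mid q-1$) and even yields the cycle length $6$ for free. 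This is a genuine simplification over the paper's computation, at the small cost of first checking that the translations are indeed automorphisms of $[P_r]$---which you do. One point you leave implicit but should state: the two roots are also nonzero (the constant term is $1$), so the resulting $k$'s actually index elements of $N(v)$.
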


\begin{proof} For a), consider for any $r\in GF(q)$, $r\ne 0$, the map $\varphi : P_{1}\rightarrow P_{r}$ given by 
$\varphi(a + \frac{1}{x-i}) = a + \frac{r}{x-ri}$.  Let $v, w\in P_{1}$, say with $v(x) = a + \frac{1}{x-i}$ and $w(x) = b + \frac{1}{x-j}$.  Then $vw\in E([P_{1}])\Leftrightarrow (b-a)(j-i) = 1\Leftrightarrow (b-a)(rj-ri) = r
\Leftrightarrow \varphi(v)\varphi(w)\in E([P_{r}])$ .  Thus $\varphi$ is a graph isomorphism, and since $r$ was arbitrary, it follows that for any $s\ne 0$ we have $[P_{r}]\cong [P_{1}]\cong [P_{s}]$.

Consider b).  Fix $r$, and consider any two points $(i,a)$ and $(j,b)$ of $P_{r}$.  By Corollary \ref{edges fields} we have $(i,a)(j,b)\in E(C_{P}(q))$ if and only if $i\ne j$ and 
$(b-a)(j-i) = r$ in $GF(q)$.  Let $H_{ij} = [B_{i}(r)\cup B_{j}(r)]$ for $i\ne j$.  Note there can be no edge in $H_{ij}$ of the form $(i,a)(i,b)$ since $(b-a)(i-i) = 0\ne r$, and similarly no edge of the form $(j,a)(j,b)$.  
Now given $(i,a)\in B_{i}(r)$, a point $(j,b)\in B_{j}(r)$ is a neighbor of $(i,a)$ if and only if $(b-a)(j-i) = r$ by Corollary \ref{edges fields}.

Thus for this fixed $i$ and $j$ we can uniquely determine $b$ by the equation $b = r(j-i)^{-1} + a$, showing that $(j,b)$ is the only neighbor of 
$(i,a)$ in $B_{j}(r)$.  A symmetric argument shows that each point in $B_{j}(r)$ has a unique neighbor in $B_{i}(r)$.  Thus $E(H_{ij})$ is a perfect matching, which matches $B_{i}(r)$ to $B_{j}(r)$.

\begin{figure}[htb]
\begin{center}
\includegraphics{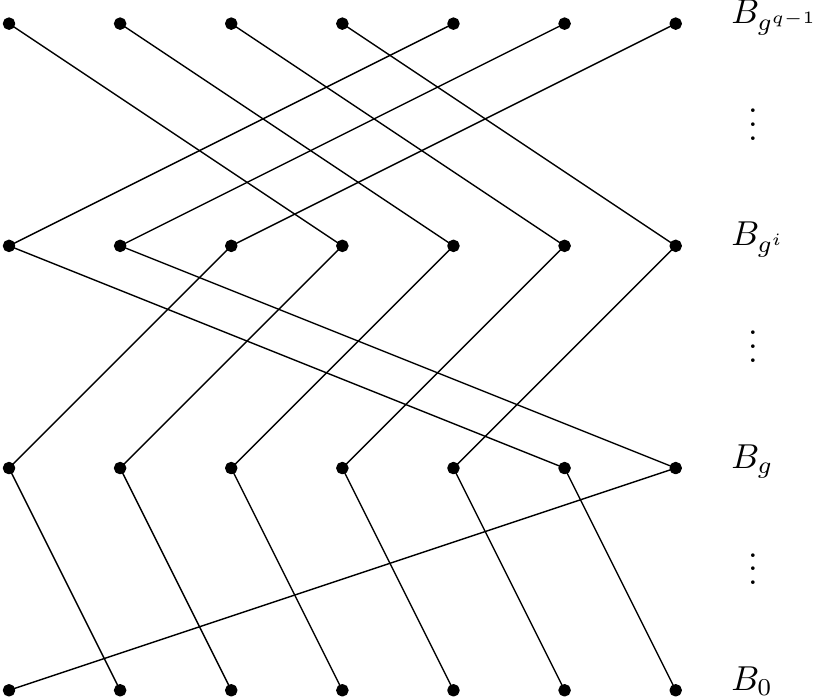}
\end{center}
\caption{Perfect matching between any two levels of $P_1$.}
\label{f2}
\end{figure}

For c), let $v\in C_{P}(q)$, say with $v\in B_{i}(r)\subset P_{r}$ for some $r\ne 0$.   By Corollary \ref{edges fields}, any neighbor of $v$ in $C_{P}(q)$ must also lie in $P_{r}$.  By part b), the neighbors of $v$ are in one to one 
correspondence with the sets $B_{j}(r)$, $j\ne i$, $j\in GF(q)$.  Thus $v$ has exactly $|GF(q)|-1 = q-1$ neighbors in $C_{P}(q)$. 

For d), take $v\in C_{P}(q)$, and by the isomorphism of subgraphs $[P_{r}]$ from part a), 
we can take $v = (i,a)\in P_{1}$.  By Corollary \ref{edges fields} we have $N(v)\subset P_{1}$.  It suffices to show that 
$[N(v)]$ is regular of degree 2.  Let $(j,b)\in N(v)$, so $j\ne i$ by part b).  Now any neighbor $(k,c)$ of $(j,b)$ in $N(v)$ must lie in 
$N((i,a))\cap N((j,b))$.  So to show that $(j,b)$ has degree $2$ in $[N(v)]$, 
it suffices to show that $(k,c)\in P_{1}$ satisfies $(k,c)\in N((i,a))\cap N((j,b))$ if and only if $k$ 
is a root in $GF(q)$ of a quadratic equation over $GF(q)$ having two distinct roots in $GF(q)$.

Suppose first that $(k,c)\in N((i,a))\cap N((j,b))$.  By Corollary \ref{edges fields} we must have the equations 
$$(c-a)(k-i) = 1, (b-c)(j-k) = 1, (b-a)(j-i) = 1.$$  
Using the second and third equations we get $c = (j-i)^{-1} - (j-k)^{-1} + a$, and from the first equation $c = (k-i)^{-1} + a$.  
Setting these 
two expressions for $c$ equal we obtain $(k-i)^{-1} + (j-k)^{-1} = (j-i)^{-1}.$  Some simplification leads to the 
quadratic $k^{2} - k(i+j) + ij +(j-i)^{2} = 0$ with coefficients over $GF(q)$ and unknown $k$. By Corollary \ref{quadroots}b from the Appendix,  
we see that that there are two distinct solutions for $k$; namely 
$k_{1} =  \frac{1}{2}[i(1+\sqrt{-3}) + j(1-\sqrt{-3})]$, and $k_{2} =  \frac{1}{2}[i(1-\sqrt{-3}) + j(1+\sqrt{-3})].$   

Conversely suppose that $k$ is one of the two distinct solutions of $k^{2} - k(i+j) + ij +(j-i)^{2} = 0$.  Then 
$(k-i)(j-k) = -k^{2} + k(i+j) - ij = (j-i)^{2}$, and using $\frac{1}{(k-i)(j-k)} = \big( \frac{1}{j-i} \big)\big( \frac{1}{k-i} + \frac{1}{j-k} \big)$, one can derive 
$\frac{1}{k-i} + \frac{1}{j-k} = \frac{1}{j-i}$. 
Now set $c = \frac{1}{k-i} + a$, so immediately we get $(c-a)(k-i) = 1$.  Since $(i,a)$ and $(j,b)$ are neighbors we have $(b-a)(j-i) = 1$, 
so $b = \frac{1}{j-i} + a$.  It follows that $c = \frac{1}{k-i} + a = \frac{1}{j-i} - \frac{1}{j-k} + a = b - \frac{1}{j-k}$.  Hence we get $(b-c)(j-k) = 1$.  Thus the three equations  
$(c-a)(k-i) = 1$, $(b-c)(j-k) = 1$, and $(b-a)(j-i) = 1$ hold, showing that $(k,c)\in N((i,a))\cap N((j,b))$ by Corollary \ref{edges fields}. 

Note that once $k$ is determined (as one of the two distinct roots), 
then the point $(k,c)$ is uniquely 
determined by the perfect matching between $B_{k}(1)$ and $B_{i}(1)$ (or $B_{j}(1)$).  Thus we obtain that an arbitrary point $(j,b)\in N(v)$ has exactly two neighbors in $N(v)$, 
completing d).

\end{proof}

To round out the structure of $C_{P}(q)$ we consider the connected components of $C_{P}(q)$.  

\begin{theorem}  Let $q = p^{m}$, where $p$ is an odd prime, with $q\equiv 1($mod $3)$, $q\geq 13$.   Then the connected components of $C_{P}(q)$ are as follows.

\noindent 1) the isolated points - these are of the form $\pi(x) = ax + b$, $a\ne 0 $, and there are $q(q-1)$ of them,

\noindent 2) the $q-1$ many connected components $[P_{r}]$ induced by the sets $P_{r}$.

\end{theorem}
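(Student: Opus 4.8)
The plan is to handle the two parts separately, observing that part 1 is essentially a restatement of earlier results and that all the real work lies in part 2. For part 1, I would simply invoke Corollary \ref{edges fields}b, which says $\pi$ is isolated in $C_P(q)$ iff $\pi(\infty)=\infty$; the computation inside the proof of Lemma \ref{format}b already identifies the maps fixing $\infty$ as exactly the affine maps $\pi(x)=ax+b$ with $a\ne 0$ (equivalently the $c=0$ elements, i.e.\ $AGL(1,q)\subset PGL(2,q)$), and there are $|AGL(1,q)| = q(q-1)$ of them. So part 1 follows immediately, and the non-isolated vertices are precisely the set $N$, identified via the bijection $\alpha$ with $P$.

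For part 2, I would first argue that the sets $P_r$ ($r\ne 0$) split the non-isolated vertices into unions of components with no crossing edges. This is immediate from Corollary \ref{edges fields}a: an edge between $a+\frac{r}{x-i}$ and $b+\frac{s}{x-j}$ forces $r=s$, so no edge can join vertices lying in different $P_r$. Since $P=\bigcup_{r\ne 0}P_r$ accounts for all $q^2(q-1)$ non-isolated vertices, it then suffices to prove that each induced subgraph $[P_r]$ is connected. By the isomorphism $[P_r]\cong[P_1]$ of Theorem \ref{first structure}a, I only need to show $[P_1]$ is connected.

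To prove $[P_1]$ is connected I would use the level decomposition $P_1=\bigcup_i B_i(1)$ together with Theorem \ref{first structure}b, which gives a perfect matching between any two levels; in particular every vertex outside $B_0(1)$ has a neighbor inside $B_0(1)$, so it is enough to show that all of $B_0(1)$ lies in a single component. Writing a vertex of $P_1$ as $(i,a)$ (meaning $a+\frac{1}{x-i}$), the matching formula says the unique neighbor of $(i,a)$ in level $j$ is $(j,\,a+(j-i)^{-1})$. I would then exhibit the explicit three-step walk through the levels $0,\,j,\,2j,\,0$, namely $(0,0)\to(j,\,j^{-1})\to(2j,\,2j^{-1})\to(0,\,\tfrac{3}{2}j^{-1})$, which is a valid walk since $0,j,2j$ are pairwise distinct for $j\ne 0$ in odd characteristic. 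As $j$ ranges over $GF(q)^{*}$, the endpoint value $\tfrac{3}{2}j^{-1}$ ranges over all of $GF(q)^{*}$, because $6=2\cdot 3\ne 0$ in $GF(q)$ — here $q$ odd gives $2\ne 0$ and $q\equiv 1\pmod 3$ gives $3\ne 0$. Hence every $(0,c)$ is reachable from $(0,0)$, so $B_0(1)$, and therefore $[P_1]$, is connected.

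Combining these, each $[P_r]$ is a single connected component, yielding the $q-1$ components of item 2 and, with part 1, the full statement. The main obstacle is exactly the connectivity of a single $[P_r]$: the perfect matchings of Theorem \ref{first structure}b make the quotient ``level graph'' complete but do not by themselves force the whole graph connected, so the crux is producing a closed walk at a fixed level realizing an arbitrary change in the $a$-coordinate. The role of the hypotheses ($q$ odd, $q\equiv 1\pmod 3$) is precisely to guarantee $6\ne 0$, which is what makes $j\mapsto \tfrac{3}{2}j^{-1}$ surject onto $GF(q)^{*}$; I would double-check that this characteristic condition, rather than the size bound $q\ge 13$ (inherited from Corollary \ref{edges fields}), is the essential ingredient.
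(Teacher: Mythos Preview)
Your proof is correct, and for part 2 it is genuinely different from (and considerably simpler than) the paper's argument.

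Both you and the paper reduce part 2 to the connectivity of $[P_{1}]$, and further reduce to showing that all of $B_{0}(1)$ lies in a single component (since the perfect matchings of Theorem \ref{first structure}b attach every other vertex to $B_{0}(1)$).  At this point the approaches diverge.  The paper fixes a generator $g$ of $GF(q)^{*}$, builds the long path $(0,0)-(g,\alpha_{1})-(g^{2},\alpha_{2})-\cdots-(g^{q-1},\alpha_{q-1})$ through all nonzero levels, computes closed-form expressions for the $\alpha_{k}$ and for the level-$0$ neighbors $\beta_{k}$, and proves (Claim 2) that the $\beta_{k}$ are pairwise distinct; this requires choosing $g$ to avoid the roots of $x^{2}-x+1$ and hence uses $q\ge 13$ via $\phi(q-1)>2$.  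Finally it finishes with a counting/degree argument (Claim 3 together with Theorem \ref{first structure}c,d) to rule out a stray component.  You instead exhibit, for each $j\ne 0$, the explicit three-edge walk
\[
(0,0)\;-\;(j,\,j^{-1})\;-\;(2j,\,2j^{-1})\;-\;\bigl(0,\tfrac{3}{2}j^{-1}\bigr),
\]
and observe that since $2,3\ne 0$ in $GF(q)$ (from $q$ odd and $q\equiv 1\pmod 3$) the map $j\mapsto\tfrac{3}{2}j^{-1}$ is a bijection of $GF(q)^{*}$, so every $(0,c)$ with $c\ne 0$ is reached from $(0,0)$.  This bypasses the $\alpha_{k},\beta_{k}$ computations, the distinctness argument, and the final degree-based contradiction entirely.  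The paper's route has the side benefit of exercising all the structural facts about $[P_{1}]$ (regularity, neighborhood structure), but your route is shorter and makes transparent that the only arithmetic obstruction is $6\ne 0$; the bound $q\ge 13$ enters only through the cited Corollary \ref{edges fields} and Theorem \ref{first structure}, not through your connectivity step itself.
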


\begin{proof} By Corollary \ref{edges fields}b we have that $\pi\in PGL(2,q)$ is an isolated point in $C_{P}(q)$ if and only if $\pi(\infty) = \infty$. 
This is equivalent to  $\pi(x) = ax + b$, $a\ne 0$ and there are $q(q-1)$ such points, completing part 1).

The remaining permutations are all of the form $\pi(x) = a + \frac{r}{x-i}$ for suitable $a,r,i\in GF(q)$ with $r\ne 0$ as shown earlier.  
Hence it suffices to analyze the connected component structure 
of $[\cup_{r\ne 0} P_{r}]$.  By Corollary \ref{edges fields} and Theorem \ref{first structure}a, 
to prove part 2) it suffices to prove that any one of the $[P_{r}]$, say $[P_{1}]$, is connected.

Recall the partition $P_{1} = \cup_{i\in GF(q)}B_{i}(1)$ defined above, and from now on we abbreviate $B_{i}(1)$ by $B_{i}$.  
Let $g$ by a generator of the multiplicative cyclic subgroup of nonzero elements in $GF(q)$.
Then we can write this partition as $P_{1} = B_{0}\cup (\cup_{1\le k\le q-1}B_{g^{k}})$.
We regard the sets in this partition as ``levels" of $C_{P}(q)$; where $B_{0}$ is level $0$ and $B_{g^{k}}$ is level $k$, $1\le k\le q-1$.  
See Figure 2 for an illustration of $P_{1}$ from this viewpoint, where in that Figure we continue with the notation $(i,a)$ for $a + \frac{1}{x-i}$. In particular, 
$(g^{t}, a)$ refers to $a + \frac{1}{x-g^{t}}$.    
By Theorem \ref{first structure}b the subgraph of $[P_{1}]$ 
induced by any two levels has edge set which is a perfect matching, as illustrated in Figure 3.

First we observe that to show that $[P_{1}]$ is connected it suffices to show that any two vertices in $B_{0}$ are joined by a path in $[P_{1}]$.  For if that 
was true, then we can find a path in $[P_{1}]$ from $(0,0)$ to any vertex $w\in P_{1}$ (thus showing connectedness of $[P_{1}]$) as follows.  If $w\in B_{0}$ we are done by assumption.  So suppose $w\notin B_{0}$, 
say with $w\in B(g^{k})$.  Let $v$ be 
the unique neighbor in $B_{0}$ of $w$ under the perfect matching $E([B_{0}\cup B(g^{k})])$.  
Let $P$ be the path from $(0,0)$ to $v$ in $[P_{1}]$ which exists by assumption.  Then $P$ followed by the edge $vw$ is a walk joining $(0,0)$  
to $w$, so $P$ contains a path from $(0,0)$ to $w$.

By Theorem \ref{first structure}b there is a (unique) path in $[P_{1}]$ starting at $(0,0)$ and passing through levels $1,2,\cdots, q-1$ in succession. 
Let $(0,0) - (g, \alpha_{1}) - (g^{2}, \alpha_{2}) - ... - (g^{q-1}, \alpha_{q-1})$ be this path, illustrated in bold lines in Figure 4, for suitable $\alpha_{k}\in GF(q)$. 
For $k\geq 1$ let $(0, \beta_{k})\in B_{0}$ be the unique neighbor in level $0$ of the vertex $(g^{k}, \alpha_{k})$ in level $k$.   The edges $(g^{k}, \alpha_{k})(0, \beta_{k})$ 
are illustrated by the dotted lines in in Figure 4.

This path and the 
points $(0, \beta_{k})$ are illustrated in Figure 4.
Our first step is to obtain the values of 
$\alpha_{k}$ and $\beta_{k}$.

\begin{figure}[htb]
\begin{center}
\includegraphics{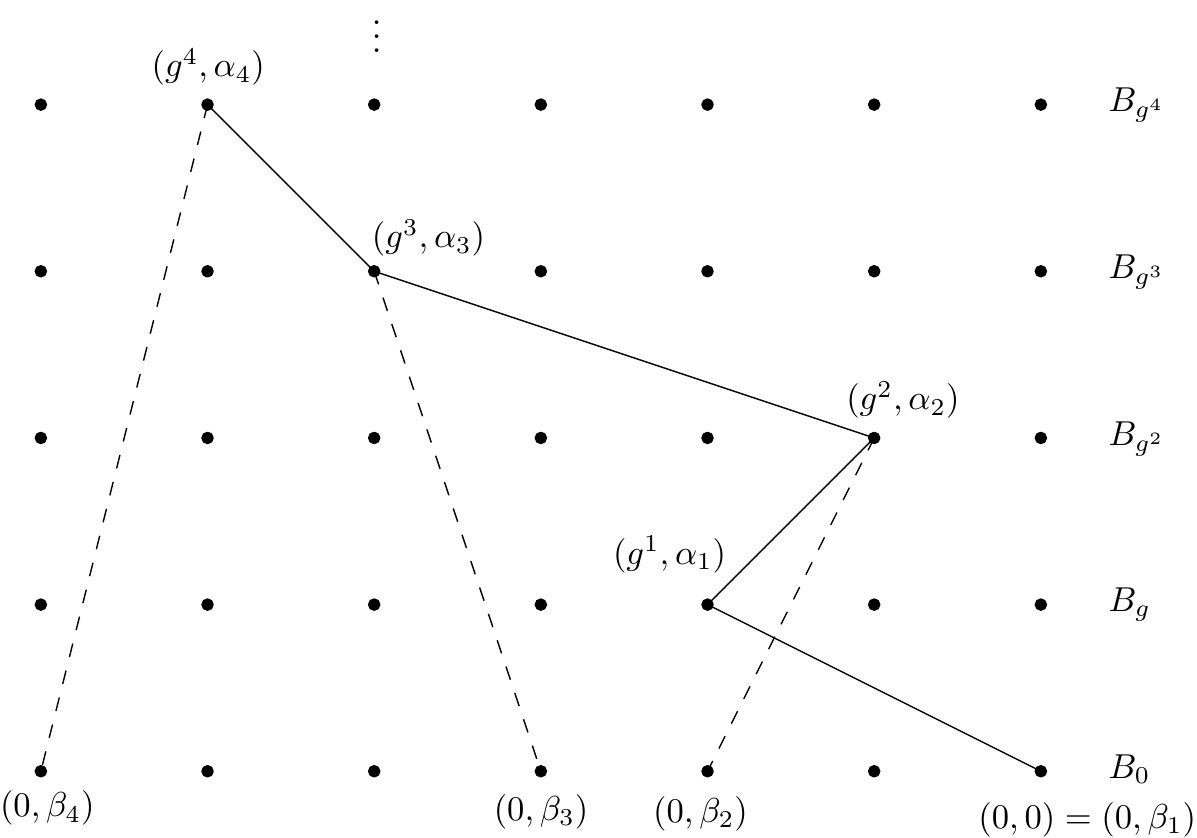}
\end{center}
\caption{The path $(0,0)-(g,\alpha_1)-(g^2,\alpha_2)-\dots -(g^{q-1},\alpha_{q-1})$ in $P_{1}$, where
$(0,\beta_i)$ is the level $0$ neighbor of $(g^i,\alpha_i)$.}
\label{f3}
\end{figure}

\smallskip

\noindent \underline{\textbf{Claim 1}:} We have
 
\noindent a) $\alpha_{1} = \frac{1}{g}, \alpha_{2} =  \frac{1}{g-1}$, and $\alpha_{k} = \frac{g^{k-1}+g^{k-3}+g^{k-4}+\cdots + g+1}{(g-1)g^{k-1}}$ for $k\geq 3.$ 

\noindent b) $\beta_{1} = 0$, and $\beta_{k} = \frac{(g^{2}-g+1)(1+g+g^{2}+g^{3}+\cdots + g^{k-2})}{g^{k}(g-1)}$ for $k\geq 2$.
\smallskip

\noindent \underline{Proof of Claim 1}: We repeatedly use the fact, proved earlier, that if $(r, a)$ and $(s, b)$ are adjacent vertices in the contraction graph $C_{P}(q)$, then 
$(s-r)(b-a) = 1$. 

For part a), since $(0,0) - (g, \alpha_{1})$ is an edge in $C_{P}(q)$ we have $(\alpha_{1}-0)(g-0) = 1$, so $\alpha_{1} = \frac{1}{g}.$  
Since $(g, \alpha_{1}) - (g^{2}, \alpha_{2})$ is an edge  we have $(\alpha_{2}-\frac{1}{g})(g^{2}-g) = 1$, 
yielding $\alpha_{2} = \frac{1}{g-1}$, and similarly $(\alpha_{3}-\frac{1}{g-1})(g^{3}-g^{2}) = 1$, yielding $\alpha_{3} = \frac{g^{2}+1}{(g-1)g^{2}}$.  Now for $k\geq 3$ we proceed by induction, having proved the base case 
$k = 3$.  Since $(g^{k}, \alpha_{k}) - (g^{k-1}, \alpha_{k-1})$ is an edge, 
we have $(\alpha_{k} - \alpha_{k-1})(g^{k} - g^{k-1}) = 1$.  Solving for $\alpha_{k}$ and applying 
the inductive hypothesis to $\alpha_{k-1}$, we obtain $\alpha_{k} = \frac{1}{g^{k}-g^{k-1}} + \frac{g^{k-2}+g^{k-4}+g^{k-5}+\cdots + g+1}{(g-1)g^{k-2}}$, which after simplification yields the claim.

For part b), we have $\beta_{1} = 0$ since $(0,0) - (g, \alpha_{1})$ is an edge by definition.  Since $(g^{2}, \alpha_{2}) - (0, \beta_{2})$ 
is an edge, we have $(\frac{1}{g-1} - \beta_{2})(g^{2}-0) = 1$, and solving for $\beta_{2}$ and simplifying we get the claim for $k=2$.  Consider now 
$k\geq 2.$  The existence of edge $(g^{k}, \alpha_{k}) - (0, \beta_{k})$ gives $({\alpha_{k} - \beta_{k}})g^{k} = 1$, so $\beta_{k} = \alpha_{k} - \frac{1}{g^{k}}$. 
Using the formula for $\alpha_{k}$ from part a), we have 
$\beta_{k} = \frac{g^{k-1}+g^{k-3}+g^{k-4}+\cdots + g+1}{(g-1)g^{k-1}} - \frac{1}{g^{k}} = \frac{g^{k}+g^{k-2}+g^{k-3}+ \cdots+g^{2}+1}{(g-1)g^{k}} = \frac{(g^{2}-g+1)(1+g+g^{2}+g^{3}+\cdots + g^{k-2})}{g^{k}(g-1)}$. 
QED

\medskip

\noindent \underline{\textbf{Claim 2}:} We have $|\{ \beta_{k}: 1\leq k\leq q-1\}| = q-1$; that is, the $\beta_{k}$, $1\le k\le q-1$, are pairwise distinct.

\smallskip

\noindent \underline{Proof of Claim 2}:   In applying Claim 1, we note first that $g$ could have been chosen so as not to be a root 
of $x^{2} - x + 1 = 0$ as follows.  The number of roots in $GF(q)$ to this quadratic is at most $2$.  Now the number of generators in the multiplicative cyclic group $GF(q) - \{0\}$ of order $q-1$ 
is the euler totient function $\phi(q-1)$, defined as the number of integers $1\le s\le q-1$ which are relatively prime to $q-1$.  Since $q$ is an odd prime power with $q\geq 13$, we know that $\phi(q-1) > 2$, so 
such a $g$ exists.

We show that for for any pair $j,k$ with $1\le j < k \le q-1$ we have $\beta_{k}\ne \beta_{j}$.  
 
 Consider first the case $j = 1$.  Since $\beta_{1} = 0$, we need to show that $\beta_{k}\ne 0$ for $2\le k\le q-1$.  Supposing the 
contrary and applying Claim 1b we get
 $\frac{(g^{2}-g+1)(1+g+g^{2}+g^{3}+\cdots + g^{k-2})}{g^{k}(g-1)} = 0.$  Canceling the nonzero factor $\frac{g^{2}-g+1}{g^{k}(g-1)}$ (by the preceding paragraph) 
 on the left side, we get $0 = (1+g+g^{2}+g^{3}+\cdots + g^{k-2}) = \frac{g^{k-1} - 1}{g - 1}.$  
 This implies that $g^{k-1} - 1 = 0$, so $g$ has order $k-1$.  This is impossible since $k-1\le q-2$ while $g$, being a generator of the cyclic group $GF(q) - \{0\}$, 
 must have order $q-1$.
 
 So now suppose that $j\geq 2$.  Assuming the contrary that $\beta_{k} = \beta_{j}$ and applying Claim 1b, we get after simplification that 
 $1+g+g^{2}+g^{3}+\cdots + g^{k-2} = g^{k-j}(1+g+g^{2}+g^{3}+\cdots + g^{j-2}) = g^{k-j} + g^{k-j+1} + \cdots + g^{k-2}$.   Thus we have 
 $0 = 1 + g + g^{2} + \cdots + g^{k-j-1} = \frac{g^{k-j} - 1}{g - 1}$.  So $g^{k-j} = 1$, which is impossible since $k - j\le q - 3$, while $g$ has order $q - 1$. QED
 \medskip        
                      
We introduce some notation in preparation for the rest of the argument.  Let $Z = \{ (0,\beta_{k}): 1\leq k\leq q-1\}\subset B_{0}$.  
Since $|B_{0}| = q$, by Claim 2 we have $|B_{0}  - Z| = 1$, and we let $u$ be the unique vertex of $B_{0} - Z$.  Further 
for any subset $T$ of vertices in $C_{P}(q)$, we let $N(T) = \{ v\in C_{P}(q): v\notin T, vt\in E(C_{P}(q))$ for some $t\in T\} $  
be the neighbor set of $T$ in $C_{P}(q)$.  Recall also that $[T]$ denotes the subgraph of $C_{P}(q)$ induced by $T$.
\smallskip

\noindent \underline{\textbf{Claim 3}:} Let $H = [Z\cup N(Z)]_{C_{P}(q)}$, and $H' = [\{u\}\cup N(u)]_{C_{P}(q)}$. 

\noindent a) $H'$ is connected.

\noindent b) $H$ is connected.

\noindent c) $V(H)\cap V(H') = \emptyset$ 

\noindent d) We have the partition $V(P_{1}) = V(H)\cup V(H')$.

\smallskip 

\noindent \underline{Proof of Claim 3}: For part a), we apply Theorem \ref{first structure}b to deduce that $H'$ has the spanning star  subgraph $K_{1, q-1}$, where 
the center is $u$ and the leaves, one in each level $B_{i}$, $i\ne 0$, form $N(u)$.  Thus  $H'$ is connected.

Consider part b).  Since $\beta_{1} = 0$ we have $(0,0)\in Z\subset V(H)$. 
Thus it suffices to show that for any $w\in V(H)$ there is a path in $H$ joining $(0,0)$ to $w$.  

Suppose first that $w\in Z$, so $w = (0, \beta_{k})$ for some $k$.  Observe that $(g^{i},\alpha_{i}) \in N(Z)$ for all $i$ by definition.  So the path $(0,0) - (g, \alpha_{1}) - (g^{2}, \alpha_{2}) - ... - (g^{k}, \alpha_{k})$ 
followed by the edge $(g^{k}, \alpha_{k}) - (0,\beta_{k})$ is path in $H$ joining $(0,0)$ to $w$.  

Next suppose $w\in N(Z)$, say with $w$ adjacent to $(0, \beta_{k})\in Z$.  
Then the path $(0,0) - (g, \alpha_{1}) - (g^{2}, \alpha_{2}) - ... - (g^{k}, \alpha_{k})$ followed by the length $2$ path 
$(g^{k}, \alpha_{k}) - (0, \beta_{k}) - w$ is a walk in $H$ joining $(0,0)$ to $w$, and this walk  contains the required path. 

Next consider c).  Suppose not, and let $z\in V(H)\cap V(H')$, say with $z\in B(g^{k})$, noting that $k\geq 1$ 
since each level, in particular $B_{0}$, is an independent set in $[P_{1}]$.  
Then $z$ has two distinct neighbors in $B_{0}$; namely $u$ and $(0,\beta_{j})$, for 
some $1\le j\le q-1$.  This contradicts the fact that the edge set of $[B(g^{k})\cup B_{0}]$ is 
a perfect matching between the levels $B(g^{k})$ and $B_{0}$ by Theorem \ref{first structure}b.  Thus $V(H)\cap V(H') = \emptyset$.   

Consider now d).  By part c), it suffices to show that $|V(P_{1})| = |V(H)| + |V(H)'|$. 
By Theorem \ref{first structure}b, it follows that $|V(H)| = |Z|q = (q-1)q.$   For the same reason $|V(H')| = q.$  Therefore 
$|V(P_{1})| = q^{2} = |V(H)| + |V(H)'|$ as required.  QED   

\medskip

We can now complete the proof of the theorem by showing that $P_{1}$ is connected.  In view of Claim 3, to do this we are
reduced to showing that there is an edge $vw\in E([P_{1}])$ with $v\in H'$ and $w\in H$.  
Suppose no such edge exists.  Since $[P_{1}]$  is $(q-1)$-regular by Theorem \ref{first structure}c, 
it follows that $H'$ is a simple $q-1$ regular graph on $q$ vertices.  Thus  
 $H' = K_{q}$. Hence $[N(u)] =  K_{q-1}$.  But this is a contradiction for $q\geq 5$ since by 
 Theorem \ref{first structure}d the neighborhood of any nonisolated point in $C_{P}(q)$ is regular of degree $2$, 
 while $[N(u)] $ is regular of degree $q-2 > 2$ since we have assumed $q\geq 13$.     \end{proof}

We can now obtain our independent set in $C_{P}(q)$ as a consequence of our previous results and the following theorem of Alon \cite{Alon}.

\begin{theorem} \cite{Alon} Let $G$ = (V,E) be a graph on $N$ vertices with average degree $t\geq 1$ in which for every vertex $v\in V$ the induced subgraph on the set 
of all neighbors of $v$ is $r$-colorable.  Then the maximum size $\alpha(G)$ of an independent set in $G$ satisfies 
$\alpha(G)\geq \frac{c}{\log(r+1)}\frac{N}{t}\,\log t$, for some absolute constant $c$. 
\end{theorem}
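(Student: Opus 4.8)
This is a known result of Alon, so I would reconstruct its probabilistic-method proof rather than invent a new argument. The plan is to exhibit a random independent set $I$ whose expected size already meets the claimed bound; since $\alpha(G)\ge \mathbb{E}[|I|]$, the theorem follows. The strategy is to prove a per-vertex estimate of the shape $\Pr[v\in I]\ge \frac{c'}{\log(r+1)}\,g(\deg(v))$, where $g(d)\sim (\log d)/d$ is the Shearer growth rate, and then sum over $v$. Because $g$ is convex and decreasing for $d$ large, Jensen's inequality gives $\sum_v g(\deg(v))\ge N\,g(t)$ with $t$ the average degree, which is precisely how the average-degree form $\frac{N}{t}\log t$ is recovered from the purely local estimates. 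Thus the whole problem is reduced to a single-vertex probability bound.

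The core construction is a random-priority (Shearer-type) independent set: assign to each vertex an i.i.d. uniform weight in $[0,1]$ and run greedy selection in the resulting random order, so that $v\in I$ exactly when $v$ is selected over all of its neighbors. With no structural hypothesis this only yields the Caro--Wei bound $\sum_v 1/(\deg(v)+1)$, which lacks the logarithmic factor; the extra $\log t$ must come from a refined analysis of $\Pr[v\in I]$ that exploits sparseness of the neighborhood $N(v)$. In Shearer's original triangle-free case $[N(v)]$ is edgeless, so the weights on $N(v)$ behave independently and an exact one-variable integral produces $g$. Here the hypothesis is weaker, namely that $[N(v)]$ is $r$-colorable, so I would partition each neighborhood into $r$ independent color classes and carry the priority analysis through class by class, treating the $r$ classes as the source of the loss relative to the triangle-free optimum.

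The main obstacle is the analytic heart of the argument: establishing the Shearer-type convexity inequality in the locally $r$-colorable setting and extracting exactly the factor $1/\log(r+1)$. Concretely, I would condition on the weights of $N(v)$, bound the conditional selection probability of $v$ using the independence \emph{within} each color class, and then average using convexity of the relevant scalar function; the $r$ color classes interact multiplicatively, and controlling this product is where the $\log(r+1)$ denominator appears (intuitively, each colour doubling of the local chromatic number costs a constant factor, so $r$ colors cost a $\log(r+1)$ factor). An alternative route I would keep in reserve is the occupancy-fraction (hard-core model) method: analyze the expected size of a sample from the hard-core distribution at fugacity $\lambda\asymp (\log t)/t$ and lower-bound the occupancy fraction through the local $r$-colorability, which tends to deliver such locally-sparse independence bounds with cleaner constants. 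Either way, once the per-vertex estimate $\Pr[v\in I]\ge \frac{c'}{\log(r+1)}\,g(\deg(v))$ is in hand, summing over the $N$ vertices and applying Jensen yields $\alpha(G)\ge \mathbb{E}[|I|]\ge \frac{c}{\log(r+1)}\frac{N}{t}\log t$, as required.
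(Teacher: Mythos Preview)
The paper does not prove this theorem at all: it is simply quoted from Alon's paper \cite{Alon} and used as a black box in the subsequent Corollary. There is therefore no ``paper's own proof'' to compare your proposal against.

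As for the proposal itself, your outline is a reasonable high-level reconstruction of the probabilistic strategy behind Alon's result (random ordering / Shearer-type analysis, per-vertex selection probability, convexity to pass from local degrees to average degree). The sketch is honest about where the real work lies---extracting the $\log(r+1)$ denominator from the $r$-colorability of each neighborhood---and correctly flags that step as the analytic heart rather than pretending it is routine. That said, it remains a plan rather than a proof: the crucial inequality $\Pr[v\in I]\ge \frac{c'}{\log(r+1)}\,g(\deg(v))$ is asserted, not derived, and the ``classes interact multiplicatively'' heuristic for the $\log(r+1)$ loss would need a concrete calculation to become an argument. For the purposes of this paper none of that is required; citing \cite{Alon} is exactly what the authors do.
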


\begin{corollary} Let $q$ be a power of an odd prime $p$, with $q\equiv 1($mod $3)$, $q\geq 13$.  

\noindent {\textbf a)} $\alpha(C_{P}(q))\geq Kq^{2}\log q$ for some constant $K$.

\noindent {\textbf b)} $M(q, q-3) \geq Kq^{2}\log q$ for some constant $K$.

\end{corollary}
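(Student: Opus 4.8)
The plan is to apply the theorem of Alon quoted just above directly to the graph $G = C_{P}(q)$, taking $r = 3$. Alon's bound requires three inputs: the vertex count $N$, the average degree $t \geq 1$, and a uniform integer $r$ bounding the chromatic number of every induced neighborhood. All three are already available from the structural results of this section, so the corollary is essentially a matter of assembling them and checking the arithmetic.

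First I would verify that every neighborhood is $3$-colorable, so that $r = 3$ is admissible. For a non-isolated vertex $v$, Theorem \ref{first structure}d states that $[N(v)]$ is a disjoint union of cycles, and any disjoint union of cycles is $3$-colorable. For an isolated vertex the neighborhood is empty and hence vacuously colorable. Thus the hypothesis of Alon's theorem holds uniformly with $r = 3$, so $\log(r+1) = \log 4$.

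Next I would compute $N$ and $t$. Here $N = |PGL(2,q)| = (q+1)q(q-1)$. By Corollary \ref{edges fields}b the isolated points are exactly the elements fixing $\infty$, of which there are $q(q-1)$ (these are the elements of $AGL(1,q)$); the remaining $q^{2}(q-1)$ vertices are the permutations in $P$ (Lemma \ref{format}b), each of degree $q-1$ by Theorem \ref{first structure}c. Hence the sum of the degrees is $q^{2}(q-1)^{2}$, and the average degree is $t = \frac{q^{2}(q-1)^{2}}{(q+1)q(q-1)} = \frac{q(q-1)}{q+1}$, which satisfies $t \geq 1$ for $q \geq 13$. A short computation gives $\frac{N}{t} = (q+1)^{2}$, and since $t = q - 2 + \frac{2}{q+1} \geq q-2$ one checks (as $(q-1)(q-4)\geq 0$ for $q\geq 4$) that $\log t \geq \frac{1}{2}\log q$ for $q \geq 13$. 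Substituting into Alon's bound yields
\[
\alpha(C_{P}(q)) \geq \frac{c}{\log 4}\,(q+1)^{2}\,\log t \geq \frac{c}{2\log 4}\,(q+1)^{2}\log q \geq K q^{2}\log q
\]
for a suitable constant $K$, proving part a).

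Finally, part b) is immediate from part a) together with the reduction established at the start of this section: an independent set $I$ of $C_{P}(q)$ satisfies $hd(I^{\triangle}_{-}) = hd(I^{\triangle}) \geq q-3$ and $|I^{\triangle}_{-}| = |I|$ by Lemma \ref{basicsintro}c (using $q \geq 13 > 4$), whence $M(q,q-3) \geq |I^{\triangle}_{-}| = |I|$. Taking $I$ to be a maximum independent set and invoking part a) gives $M(q,q-3) \geq \alpha(C_{P}(q)) \geq K q^{2}\log q$. There is no serious obstacle left at this stage; the substantive work — that the neighborhood of each non-isolated vertex is a union of cycles — was already carried out in Theorem \ref{first structure}. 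The only genuine care needed is bookkeeping in the average-degree computation: retaining the isolated points is harmless, since their neighborhoods are empty and hence trivially $3$-colorable, and one must confirm that the colorability hypothesis indeed holds at \emph{every} vertex before appealing to Alon's theorem.
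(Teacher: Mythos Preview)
Your proof is correct and follows essentially the same approach as the paper: both invoke Alon's theorem with $r=3$, using Theorem~\ref{first structure}d to certify that each neighborhood is a disjoint union of cycles and hence $3$-colorable. The only difference is that the paper applies Alon's bound to a single component $[P_{1}]$ (a $(q-1)$-regular graph on $q^{2}$ vertices) and then multiplies by the number of components, whereas you apply it directly to all of $C_{P}(q)$, absorbing the isolated points into a lower average degree; this is a cosmetic variation and your bookkeeping is sound.
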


\begin{proof} Consider a).  By Corollary \ref{edges fields}a there is no edge between any two subgraphs $[P_{r}]$ and $[P_{s}]$ for $r\ne s$.  Since there are $q$ such subgraphs, and by Theorem \ref{first structure}a) they are 
pairwise isomorphic, it suffices to show that $\alpha(P_{1}) \geq Kq\log q$ for some constant $K$.

We now apply Alon's theorem to the subgraph $[P_{1}]$ of $C_{P}(q)$.  
Now  $[P_{1}]$ is $(q-1)$-regular by Theorem \ref{first structure}c, and has $q^{2}$ points.  
Since the neighborhood of every point is 
a disjoint union of cycles by Theorem \ref{first structure}d, this neighborhood must be $3$-colorable.  
It follows by Alon's theorem that $[P_{1}]$ contains an independent set of size $\frac{c}{\log 4}\frac{q^{2}}{q-1}\,\log(q-1) \sim Kq\log q$, for some constant $K$.

For b), let $I$ be an independent set in $C_{P}(q)$ 
of size $Kq^{2}\log q$ for suitable constant $K$, guaranteed to exist by by part a).
Then by the reduction made in the discussion preceding 
Lemma \ref{associates1} we have $M(q,q-3) \geq |I| \geq Kq^{2}\log q$.     
\end{proof}

\section{Special case lower bounds for $M(n,d)$ via the Mathieu groups}

In this section we consider the Mathieu groups $M_{11}, M_{12}, M_{22}, M_{23}, M_{24}$, discovered by E. Mathieu in 1861 and 1873.  
These permutation groups are the earliest known example of sporadic simple groups.  See \cite{DM}, \cite{C}, or \cite{Tas} for a discussion of their 
construction.  These groups act on $11, 12, 22, 23, 24$ letters respectively, with $M_{11}$ being a $1$ point stabilizer of $M_{12}$, while $M_{23}$ and $M_{22}$ are $1$ 
and $2$ point stabilizers of $M_{24}$ respectively.  

In this section we apply the contraction operation to these permutation groups to obtain new permutation arrays, with resulting lower bounds for $M(n,d)$ for suitable $n$ and $d$.  

Since $M_{12}$ is sharply $5$-transitive we have by Theorem \ref{Deza} that $hd(M_{12}) = 8$ and $M(12,8) = |M_{12}| = 95040$.  Similarly since $M_{11}$ 
is sharply $4$-transitive we have $M(11,8) = |M_{11}| = 7920$.  For 
$M_{24}$ we do not have sharp transitivity.  But observe that for any permutation group $G$ acting on some set, and three elements $\pi, \sigma, \tau\in G$, we have 
$hd(\pi, \sigma) = hd(\pi\tau, \sigma\tau) = hd(\tau\pi, \tau\sigma)$.  Thus $hd(G) =$min$\{hd(1,\sigma): \sigma\in G \}$.  From the set of disjoint cycle structures of elements of $M_{24}$ 
(available at \cite{WikiM24}) we find that the largest number of $1$-cycles in the disjoint cycle structure of any nonidentity element of $M_{24}$ is $8$.  Thus  $hd(M_{24}) = 24-8 = 16$, 
and from the 
stabilizer relation also $hd(M_{23}) = hd(M_{22}) = 16$.  
We thus obtain $M(24,16)\geq |M_{24}| = 24,423,040$, $M(23,16)\geq |M_{23}| = 10,200,960$, and 
$M(22,16)\geq |M_{22}| = 443,520$.

We now apply the contraction operation to these groups.  Considering the action of $M_{12}$ on the $12$-letter set $\Omega = \{x_{1}, x_{2}, \cdots, x_{12}\}$, we designate 
some element, say $x_{12}$, of $\Omega$ as the distinguished element $F$ in the definition of $\pi^{\triangle}$.  
Then define for each $\pi\in M_{12}$ the permutation $\pi^{\triangle}$ on the set $\Omega$ exactly as in the introduction. 
Thus, using the natural ordering of elements of $\Omega$ by subscript, 
the image string of any $\sigma\in M_{12}$ can be written $\sigma(x_{1})\sigma(x_{2})\cdots \sigma(x_{11})\sigma(F)$.

As before, we let $\pi^{\triangle}_{-}$ be the permutation on $11$ symbols obtained 
from $\pi^{\triangle}$ by dropping the final symbol $F$, and for any subset $S\subset M_{12}$, 
we let $S^{\triangle}_{-} = \{ \pi^{\triangle}_{-}: \pi\in S\}$, sometimes writing this as $(S)^{\triangle}_{-}$. 

\begin{proposition} \label{M12} 

\noindent {\textbf a)} $hd((M_{12})^{\triangle}_{-}) \geq 6$.

\noindent {\textbf b)} $M(11,6)\geq |M_{12}| = 95040$.

\noindent {\textbf c)} $M(10,6)\geq 8640$.

\end{proposition}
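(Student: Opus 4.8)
The plan is to treat the three parts in order. Parts a) and b) follow quickly from the sharp $5$-transitivity of $M_{12}$ together with Lemma \ref{basicsintro}, while c) calls for a second contraction.

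For part a), the point is that the contraction can never lose a full $3$ in Hamming distance here. Since $hd(M_{12}) = 8$, every pair $\pi,\sigma \in M_{12}$ satisfies $hd(\pi,\sigma) \geq 8$, and by Lemma \ref{basicsintro}a the contraction lowers this by at most $3$; so $hd(\pi^{\triangle},\sigma^{\triangle}) \geq 6$ unless $hd(\pi,\sigma) = 8$ and the drop is exactly $3$, and it suffices to exclude this last case. Using right translation invariance of Hamming distance, $hd(\pi,\sigma) = hd(\pi\sigma^{-1}, 1)$ equals $12$ minus the number of fixed points of $\tau := \pi\sigma^{-1}$, so $hd(\pi,\sigma) = 8$ forces $\tau$ to fix exactly $4$ of the $12$ letters. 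By Lemma \ref{basicsintro}b a drop of $3$ would force $\tau$ to contain a $3$-cycle. But by sharp $5$-transitivity the pointwise stabilizer of those $4$ fixed letters has order $95040/(12\cdot 11\cdot 10\cdot 9) = 8$ and acts sharply $1$-transitively, hence regularly and freely, on the remaining $8$ letters. Thus $\tau$ is a nonidentity element of a group of order $8$ acting regularly on $8$ points, so its order $m$ divides $8$ and it acts on those $8$ points as $8/m$ disjoint $m$-cycles; its cycle type is therefore one of $1^{4}2^{4}$, $1^{4}4^{2}$, $1^{4}8^{1}$, none containing a $3$-cycle. This contradiction rules out the drop-$3$ case, giving $hd((M_{12})^{\triangle}) \geq 6$, and then $hd((M_{12})^{\triangle}_{-}) = hd((M_{12})^{\triangle}) \geq 6$ by Lemma \ref{basicsintro}c.

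Part b) is then immediate. Since $hd(M_{12}) = 8 > 3$, Lemma \ref{basicsintro}c gives $|(M_{12})^{\triangle}_{-}| = |(M_{12})^{\triangle}| = |M_{12}| = 95040$; by a) this is an array on $11$ symbols with $hd \geq 6$, so $M(11,6) \geq 95040$.

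For part c) I would contract a second time. Starting from the $11$-symbol array $A = (M_{12})^{\triangle}_{-}$ with $hd(A) \geq 6$ --- equivalently, performing a double contraction of $M_{12}$ relative to two distinguished letters $F_{1}, F_{2}$ --- the goal is to find a subset $I \subseteq A$ that is independent in the second contraction graph $C_{A}$ and whose further contraction $I^{\triangle}_{-}$ is an array on $10$ symbols with $hd \geq 6$; the bound $M(10,6) \geq |I| = 8640$ then follows just as in the contraction reductions used above. The main obstacle is pinning down the independence number. Unlike $C_{A}(q)$ and $C_{P}(q)$, this graph lives on $A$, which is no longer a group, so the clean algebraic description of its edges and components (the roots of $t^{2}+t+1$, the levels $B_{i}(r)$, and so on) is unavailable. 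I would instead analyze directly which pairs of $M_{12}$ incur the maximal total distance loss under the double contraction --- this is controlled by the cycle structure of $\pi\sigma^{-1}$ at $F_{1}$ and $F_{2}$ --- in order to describe the components of $C_{A}$ and show that their independence numbers sum to $8640$. I expect this structural and counting step to require the finer conjugacy-class (cycle-type) data of $M_{12}$ and, quite possibly, computational verification.
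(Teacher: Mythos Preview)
Your treatment of parts a) and b) is correct and, for a), arguably cleaner than the paper's: where the paper identifies the $4$-point stabilizer of $M_{12}$ as $Q_{8}$ (via references and GAP) and then invokes Lagrange to rule out an element of order divisible by $3$, you use only sharp $5$-transitivity to conclude that this stabilizer has order $8$ and acts regularly on the remaining $8$ letters, whence every nonidentity element has cycle type $1^{4}2^{4}$, $1^{4}4^{2}$, or $1^{4}8^{1}$ --- no $3$-cycle. Both arguments are valid; yours avoids the external structural fact.

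Part c), however, is a genuine gap. You propose a second contraction and acknowledge that identifying a size-$8640$ independent set in the resulting contraction graph would need nontrivial cycle-type analysis of $M_{12}$ and possibly computation; you do not carry this out. The paper instead uses the elementary inequality $M(n-1,d) \geq M(n,d)/n$ (recalled in the introduction), giving immediately
\[
M(10,6) \;\geq\; \frac{M(11,6)}{11} \;\geq\; \frac{95040}{11} \;=\; 8640.
\]
So no second contraction is needed: part c) is a one-line corollary of part b). Your proposed route might in principle succeed, but as written it is not a proof, and it is vastly harder than the intended argument.
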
 

\begin{proof} We start with a).  Suppose not.  Since $hd(M_{12}) = 8$, and for any $\alpha, \beta\in M_{12}$ we have 
$hd(\alpha^{\triangle}, \beta^{\triangle})\geq hd(\alpha, \beta) - 3$ by Lemma \ref{basicsintro}a, the contrary assumption implies $hd((M_{12}^{\triangle})_{-}) = 5$.  Thus there is a pair $\sigma, \tau\in M_{12}$ 
such that $hd(\sigma,\tau) = 8$ and $hd(\sigma^{\triangle},\tau^{\triangle}) = 5$; so $hd(\sigma^{\triangle},\tau^{\triangle}) = hd(\sigma,\tau) - 3$.  
Thus by Lemma \ref{basicsintro}b we know that $\pi\sigma^{-1}$ has a $3$-cycle in its disjoint cycle factorization so the order of $\pi\sigma^{-1}$
is divisible by $3$.  

Since $hd(\sigma,\tau) = 8$ and $\pi$ and $\sigma$ are permutations on 12 letters, it follows that there are four 
positions, call them $x_{i}$, $1\le i\le 4$, at which $\pi$ and $\sigma$ agree.  
Then $\pi\sigma^{-1}$ belongs to the subgroup $H$ of $M_{12}$ fixing these four positions; that is $H = \{ \alpha\in M_{12}: \alpha(x_{i}) = x_{i}, 1\le i\le 4\}$. 
This $H$, denoted $M_{8}$, is known to be isomorphic to $Q_{8}$, the quaternion group of order $8$ (\cite{Boy}, section 3.2).  
We can also verify this directly by making use of GAP (Groups, Algorithms, Programming), a system for computational discrete algebra.  The following output employing GAP 
shows that $H\cong Q_{8}$, the quaternion group of order $8$ (\cite{Holt})

\medskip

\noindent gap$>$ $G: = MathieuGroup(12)$;;

\noindent gap$>$ $H = Stabilizer(G,[1,2,3,4], OnTuples)$;;

\noindent gap$>$ $StructureDescription(H)$;

\noindent $``Q_{8}''$

\medskip

Now the order of $\pi\sigma^{-1}$ is divisible by $3$ as noted above.   But $3$ does not 
divide $|Q_{8}|$, a contradiction to Lagrange's theorem.

Consider next b).  Using Lemma \ref{basicsintro}c and $hd(M_{12}) = 8 > 3$, we have $|M_{12}| = |(M_{12})^{\triangle}_{-}|$.  
Thus $(M_{12})^{\triangle}_{-}$ is a permutation array on 11 letters of size $|M_{12}| $ with $hd((M_{12})^{\triangle}_{-}) \geq 6$.  Part b) follows.

For part c), we recall from the introduction the elementary bound $M(n-1,d)\geq \frac{M(n,d)}{n}$.  Using part a), we then obtain $M(10,6)\geq \frac{M(11,6)}{11}\geq 8640.$         \end{proof}   

We remark that using the same method as in part b) of the above proposition one can show $M(10,6)\geq |M_{11}| = 7920$.  But this is obviously weaker than the bound we give in part c).

We now consider the contraction of $M_{24}$ and resulting special case bounds for $M(n,d)$.  Using similar notation as for $M_{12}$ above, we let $M_{24}$ act on the set of 
$24$ letters $\Theta = \{x_{1}, x_{2}, \cdots, x_{24}\}$, and we designate $x_{24}$ as the distinguished symbol $F$ in the definition of $\pi^{\triangle}$ from the introduction.  
Now define $\pi^{\triangle}$ for any $\pi\in M_{24}$ as in the introduction, along with accompanying definitions $S^{\triangle}$ and $S^{\triangle}_{-}$ for $S\subseteq M_{24}$.

\bigskip

\begin{proposition} \label{M24} 

\noindent {\textbf a)} $hd((M_{24})^{\triangle}_{-}) \geq 14$.

\noindent {\textbf b)} $M(23,14)\geq |M_{24}| = 244,823,040$.

\noindent {\textbf c)} $M(22,14)\geq \frac{|M_{24}|}{23} = 10,644,480$.

\noindent {\textbf d)} $M(21,14)\geq \frac{|M_{24}|}{23^{.} 22} = 483,840$.

\end{proposition}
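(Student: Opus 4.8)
Parts \textbf{b)}, \textbf{c)}, and \textbf{d)} reduce to part \textbf{a)} by exactly the routine steps used for $M_{12}$ in Proposition \ref{M12}, so the real content is \textbf{a)}. Granting \textbf{a)}, Lemma \ref{basicsintro}c together with $hd(M_{24})=16>3$ gives $|(M_{24})^{\triangle}_{-}| = |M_{24}| = 244823040$, so $(M_{24})^{\triangle}_{-}$ is a permutation array on $23$ symbols with Hamming distance at least $14$; this yields $M(23,14)\geq |M_{24}|$, proving \textbf{b)}. Parts \textbf{c)} and \textbf{d)} then follow by iterating the elementary bound $M(n-1,d)\geq M(n,d)/n$ recalled in the introduction: $M(22,14)\geq M(23,14)/23 = |M_{24}|/23$ and $M(21,14)\geq M(22,14)/22 = |M_{24}|/(23\cdot 22)$, and the displayed integers are just these quotients.

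For \textbf{a)} I would argue by contradiction. Since $hd(M_{24})=16$, Lemma \ref{basicsintro}a gives $hd(\alpha^{\triangle},\beta^{\triangle})\geq hd(\alpha,\beta)-3\geq 13$ for all $\alpha,\beta\in M_{24}$, so $hd((M_{24})^{\triangle}_{-})\geq 13$ holds automatically and the only possibility to exclude is equality. Suppose then that $hd((M_{24})^{\triangle}_{-})=13$, witnessed by $\sigma,\tau\in M_{24}$ with $hd(\sigma^{\triangle},\tau^{\triangle})=13$. The chain $13 = hd(\sigma^{\triangle},\tau^{\triangle})\geq hd(\sigma,\tau)-3\geq 16-3 = 13$ forces $hd(\sigma,\tau)=16$ and $hd(\sigma^{\triangle},\tau^{\triangle}) = hd(\sigma,\tau)-3$, so Lemma \ref{basicsintro}b applies and the element $g=\sigma\tau^{-1}$ contains a $3$-cycle in its disjoint cycle factorization.

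The one substantive step, and the expected obstacle, is to contradict the existence of such a $g$. Because $hd(\sigma,\tau)=16$ and both permutations move $24$ letters, $\sigma$ and $\tau$ agree in exactly $24-16=8$ coordinates, so the nonidentity element $g$ fixes at least those $8$ points; since $hd(M_{24})=16$ means every nonidentity element of $M_{24}$ fixes at most $8$ points, $g$ fixes exactly $8$ points. But the only nonidentity cycle type of $M_{24}$ with $8$ fixed points is $1^{8}2^{8}$ (read off from the list of cycle structures in \cite{WikiM24}), and this contains no $3$-cycle, contradicting the conclusion of Lemma \ref{basicsintro}b. Equivalently, and in closer parallel to the $Q_{8}$ argument for $M_{12}$, the $8$ fixed points of $g$ form an octad of the Steiner system $S(5,8,24)$, whose pointwise stabilizer in $M_{24}$ is the elementary abelian $2$-group of order $16$; since $3\nmid 16$ this group contains no element of order $3$, whereas $g$ has order divisible by $3$. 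Either version is readily confirmed in GAP exactly as for $M_{12}$, and establishes \textbf{a)}.
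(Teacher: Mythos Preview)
Your proof is correct and follows essentially the same line as the paper: reduce to a pair $\sigma,\tau$ with $hd(\sigma^{\triangle},\tau^{\triangle}) = hd(\sigma,\tau)-3$, invoke Lemma \ref{basicsintro}b to get a $3$-cycle in $\sigma\tau^{-1}$, and then derive a contradiction from the structure of the pointwise stabilizer of the $8$ common fixed points. One small remark: in your second (octad) version you assert that the $8$ fixed points \emph{must} form an octad, whereas the paper treats both cases explicitly (if they do not form an octad the pointwise stabilizer is trivial, which is an immediate contradiction since $g\neq 1$); your cycle-type argument via $1^{8}2^{8}$ sidesteps this distinction entirely and is a clean self-contained alternative not given in the paper.
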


\begin{proof}

For a), suppose not.  Since $hd(M_{24}) = 16$, and for any $\alpha, \beta\in M_{24}$ we have 
$hd(\alpha^{\triangle}, \beta^{\triangle})\geq hd(\alpha, \beta) - 3$, it follows that $hd((M_{24})^{\triangle}_{-}) = 13$.  Thus there is pair $\sigma, \tau\in M_{24}$ 
such that $hd(\sigma,\tau) = 16$ and $hd(\sigma^{\triangle},\tau^{\triangle}) = 13$; so $hd(\sigma^{\triangle},\tau^{\triangle}) = hd(\sigma,\tau) - 3$.  
Hence by Lemma \ref{basicsintro}b, $\tau\sigma^{-1}$ has a $3$-cycle in its disjoint cycle structure factorization.

Since $hd(\sigma,\tau) = 16$, and 
$\sigma$ and $\tau$ are permutations on $24$ letters, it follows that $\sigma$ and $\tau$ must agree on $8$ positions.  Thus 
$\tau\sigma^{-1}$ belongs to the subgroup $H$ of $M_{24}$ fixing these $8$ positions.  
From the structure theory of $M_{24}$, we know that if these $8$ positions form an ``octad" (among the $24$ positions), then 
$H = M_{16} \cong Z_{2}\times Z_{2}\times Z_{2}\times Z_{2}$, the elementary Abelian group of order 16 (\cite{Tas} Theorem 3.21, and \cite{Thom} pp. 197-208). Again, 
this can also be verified directly using GAP from the following output (\cite{Holt}).
\medskip

\noindent gap$>$ $G: = MathieuGroup(24)$;;

\noindent gap$>$ $H: = Stabilizer(G, [1,2,3,4,5], OnTuples)$;;

\noindent gap$>$ $S = SylowSubgroup(H,2)$;;

\noindent gap$>$ $octad: = Filtered([1..24],  x\rightarrow$ not $x$ in MovedPoints$(S) )$;

\noindent [1,2,3,4,5,8,11,13]

\noindent gap$>$ $H: = Stabilizer(G, octad, OnTuples)$;;

\noindent gap$>$ $StructureDescription(H)$;

\noindent $``C_{2}\times C_{2}\times C_{2}\times C_{2}''$.

\medskip

\noindent If these $8$ positions do not form an octad, then $H$ is the identity (\cite{Tas}, Lemma 3.1).  Now the order of $\tau\sigma^{-1}$ is divisible by $3$, so $3$ 
must divide $|H|$.  By Lagrange's theorem, this contradicts that $|H|$ has order either $16$ or $1$.

Consider next b).  Using Lemma \ref{basicsintro}c and $hd(M_{24}) = 16 > 3$, we have $|M_{24}| = |(M_{24})^{\triangle}_{-}|$.   
Thus $(M_{24})^{\triangle}_{-}$ is a permutation array on $23$ letters of size $|M_{24}| $, and by part a) 
we have $hd((M_{24})^{\triangle}_{-}) \geq 14$.  Part b) follows.

For part c), we again use the bound $M(n-1,d)\geq \frac{M(n,d)}{n}$.  Using part b), 
we then obtain $M(22,14)\geq \frac{M(23,14)}{23}\geq \frac{|M_{24}|}{23} = 10,644,480$.

For d), using $M(n-1,d)\geq \frac{M(n,d)}{n}$ again 
we get $M(21,14)\geq \frac{M(22,14)}{22}\geq \frac{|M_{24}|}{23^{.} 22} = 483,840$.       
\end{proof}

\section{Concluding Remarks}

We mention some problems left open from our work.

\smallskip

\noindent \textbf{1.} Recall that if $I$ is an independent set in $C_{P}(q)$, then $M(q, q-3)\geq |I|$.  To find a large such $I$, one can 
focus on any nontrivial connected component, say $P_{1}$, of $C_{P}(q)$.  If $P_{1}$ contains an independent set of size $k$, then by 
the isomorphism of the connected components $P_{i}$, $1\le i\le q-1$, we get an independent set of size $k(q-1) + q(q-1) = (q-1)(k+q)$ in $C_{P}(q)$, where $q(q-1)$ 
counts the number of isolated points in $C_{P}(q)$.  Our lower bound $M(q, q-3)\geq Kq^{2}\log q$ implies, again by the isomorphism of 
components, that $\alpha(P_{1})\geq Cq\log q$ (where $\alpha(G)$ is the maximum size of an independent set in a graph $G$), for some constant $C$.  
We therefore ask whether an improvement on this lower bound for $\alpha(P_{1})$ can be found.   

Now $V(P_{1})$  can be viewed as a rectangular array $\{(i,a): i, a\in GF(q)\}$ as in Figure 2, where we let $i$ be the row index, and $a$ the column index.  
By Corollary \ref{edges fields}a an independent set in $P_{1}$ is just a subset $S$ of this array with the property that for any two points
 $(i,a), (j,b)\in S$ we have $(b-a)(j-i)\ne 1$ in $GF(q)$.  Using the integer programming 
package GUROBI, we computed independent sets in $P_{1}$ of size $k$ for various $q$.  This $k$, together with the resulting 
lower bound $(q-1)(k+q)$ for $M(q,q-3)$ are presented in Table 1.  The primes $q = 41, 47, 53, 59, 71, 83, 89$, for example, are not included in this table since 
$q\not\equiv 1($mod $3)$, and hence $M(q,q-3) \geq (q+1)q(q-1)$,  an improvement over the lower bound obtained using GUROBI.

\medskip

\noindent \textbf{2.} We also ask for good upper bounds on $\alpha(P_{1})$. 

\begin{table}[ht] \label{values}
\begin{center} 
\begin{tabular}{ccc}\hline
 
$q$&$k$&$(q-1)(k+q)\le M(q,q-3))$\\
\hline
7 & 13 & 120 \\
13 & 33 & 552 \\
19 & 81 & 1800 \\
31 & 122 & 4590 \\
37 & 191 & 8208 \\
43 & 191 & 9828 \\
49 & 226 & 13200 \\
61 & 314 & 22500 \\
67 & 340 & 26862 \\
73 & 382 & 32760\\
$79$&$415$&$38532$\\
$97$&$535$&$60672$\\
103 & 598 & 71502\\
109 & 637 & 80568\\
$121$&$2613$&$328080$\\
127 & 768 & 112770 \\
139 & 867 & 138828\\
151 & 945 & 164400\\
$157$&$984$&$177996$\\
$163$&$1031$&$193428$\\
169 & 1069 & 207984 \\
181 & 1174 & 243900 \\
193 & 1262 & 279360 \\
199 & 1310 & 298782 \\
211 & 1403 & 338940 \\
223 & 1496 & 381618 \\
229 & 1565 & 409032 \\
241 & 1671 & 458880 \\
277 & 1956 & 616308 \\
283 & 2009 & 646344 \\
289 & 2045 & 672192 \\
307 & 2197 & 766224 \\
313 & 2272 & 806528 \\
331 & 2396 & 899910 \\
337 & 2462 & 940464 \\
343 & 2501 & 972648 \\
\hline
\end{tabular}
\caption{Independent set of size k in $P_{1}$ obtained by integer programming, and resulting lower bound $(q-1)(k+q)$ for $M(q,q-3)$, when $q\equiv 1$(mod $3$).} 
\end{center}
\end{table}

\section{Appendix - Some facts from Number Theory}

In this section we review some facts from number theory that were used in this paper.

 We start with some notation.  For an odd prime $p$ and integer $r\not\equiv 0 ($mod $p)$, define the Legendre symbol $(\frac{r}{p})$ to be $1$ (resp. -1)  if 
 $r$ is a quadratic residue (resp. nonresidue); that is a square (resp. nonsquare) mod $p$.  
 If $r\equiv 0 ($mod $p)$, then define $(\frac{r}{p}) = 0.$  A couple of simple facts 
 about this symbol  are these.
 
 \begin{lemma}\label{legendre}  For an odd prime $p$ and integers $r$ and $s$ we have the following.
 
 \noindent a) $(\frac{-1}{p}) = 1$ if $p\equiv 1$(mod $4)$, and $(\frac{-1}{p}) = -1$ if $p\equiv 3$(mod $4)$. 
 
 \noindent b) $(\frac{rs}{p}) = (\frac{r}{p})(\frac{s}{p})$.

 \end{lemma}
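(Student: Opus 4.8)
The plan is to derive both parts from Euler's criterion, which asserts that for any integer $r$ with $r\not\equiv 0\pmod p$ we have $r^{(p-1)/2}\equiv (\frac{r}{p})\pmod p$. So first I would establish this criterion. By Fermat's little theorem $r^{p-1}\equiv 1\pmod p$, whence $r^{(p-1)/2}$ is a square root of $1$ in $GF(p)$ and therefore equals $\pm 1$. To decide which, I would use that the multiplicative group $GF(p) - \{0\}$ is cyclic of order $p-1$: fixing a generator $g$ and writing $r = g^{k}$, the element $r$ is a nonzero square exactly when $k$ is even. If $k$ is even then $r^{(p-1)/2} = (g^{p-1})^{k/2} = 1$, while if $k$ is odd then $r^{(p-1)/2} = (g^{(p-1)/2})^{k} = (-1)^{k} = -1$, since $g^{(p-1)/2}$ is a square root of $1$ distinct from $1$ (as $g$ has order $p-1$ and $p$ is odd), hence equals $-1$. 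This matches the definition of $(\frac{r}{p})$.

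For part a), I would apply Euler's criterion with $r = -1$, giving $(\frac{-1}{p})\equiv (-1)^{(p-1)/2}\pmod p$. Both sides lie in $\{1,-1\}$, and since $p$ is odd the relation $1\equiv -1\pmod p$ is impossible; thus the congruence forces the equality $(\frac{-1}{p}) = (-1)^{(p-1)/2}$. Finally $(-1)^{(p-1)/2} = 1$ precisely when $(p-1)/2$ is even, that is when $p\equiv 1\pmod 4$, and equals $-1$ when $(p-1)/2$ is odd, that is when $p\equiv 3\pmod 4$.

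For part b), if either $r$ or $s$ is divisible by $p$ then both sides vanish by the definition of the Legendre symbol, so I may assume $r,s\not\equiv 0\pmod p$. By Euler's criterion, $(\frac{rs}{p})\equiv (rs)^{(p-1)/2} = r^{(p-1)/2}\,s^{(p-1)/2}\equiv (\frac{r}{p})(\frac{s}{p})\pmod p$. As in part a), both sides lie in $\{1,-1\}$ and $p$ is odd, so the congruence upgrades to an equality, completing the proof.

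The only nontrivial ingredient is the cyclicity of $GF(p) - \{0\}$ invoked in Euler's criterion; every other step is a direct computation or the elementary observation that a congruence modulo an odd prime between two elements of $\{1,-1\}$ is an equality. Since this cyclicity is entirely standard, there is no genuine obstacle: the lemma is a routine consequence once Euler's criterion is in hand.
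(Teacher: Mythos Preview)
Your proof is correct. Both you and the paper rely on the cyclicity of $GF(p)-\{0\}$, but the organization differs. The paper argues each part directly: for a) it writes $p=4k+1$ or $p=4k+3$, takes a generator $x$, and observes that $x^{2k}=-1$ (respectively $x^{2k+1}=-1$) to read off whether $-1$ is an even or odd power of $x$; for b) it simply asserts that $rs$ is a square if and only if $r$ and $s$ are both squares or both nonsquares. You instead first prove Euler's criterion and then obtain both parts as immediate corollaries. Your route is the standard textbook one and is slightly more uniform; in particular it gives a genuine proof of b), whereas the paper's ``just observe'' for b) tacitly uses that the nonsquares form the nontrivial coset of the index-$2$ subgroup of squares, which is essentially the same cyclic-group fact you invoke. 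Either way the content is the same elementary ingredient, just packaged differently.
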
 
 
 \begin{proof} For a), suppose $p\equiv 1$(mod $4)$. So write $p = 4k + 1$, and consider the multiplicative group of nonzero elements mod $p$, which has order $4k$ and is cyclic.  Let $x$ be a generator of this group.  
 Then note that in this group we have $1 = x^{4k} = (x^{2k})^{2}$, while also $(-1)^{2} = 1$ in this group.  Since the quadratic $z^{2} - 1 = 0$ has exactly two solutions $z = 1$ or $-1$ in $GF(q)$, and since 
 $x^{2k}\ne 1$ since $x$ is  a generator,  it 
 follows that $x^{2k} = -1$.  Thus -1 is a square mod $p$.

 If $p\equiv 3$(mod $4)$, then this cyclic group has order $4k+2$ for some integer $k$.  This time we have $1 = (x^{2k+1})^{2}$, so that by the same reasoning as above we have $x^{2k+1} = -1$.  
 This shows that -1 is not a square mod $p$, 
 since it is on odd power of the generator.
 
 Consider now b).  Just observe that the product $rs$ is a square mod $p$ if and only if both $r$ and $s$ 
 are squares mod $p$ or if both $r$ and $s$ are non-squares mod $p$.  Part b) then follows immediately.         
 
 \end{proof}

 We now recall the quadratic reciprocity law.
 
 \begin{theorem} (Gauss Quadratic Reciprocity Law) For odd primes $p$ and $q$ we have
 
 $$(\frac{p}{q})(\frac{q}{p}) = (-1)^{(\frac{p-1}{2})(\frac{q-1}{2})}.$$ 
 
 \end{theorem}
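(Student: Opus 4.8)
The plan is to prove quadratic reciprocity by the classical route of Gauss's Lemma followed by Eisenstein's lattice-point count, relying only on Euler's criterion $a^{(p-1)/2}\equiv (\frac{a}{p})\pmod p$ and the multiplicativity in Lemma \ref{legendre}b. First I would establish Gauss's Lemma: for $a$ coprime to the odd prime $p$, reduce the multiples $a, 2a, \dots, \frac{p-1}{2}a$ to their least positive residues mod $p$, and let $\mu$ be the number of these residues exceeding $p/2$. Replacing each such residue $r$ by $p-r$, I would check that the resulting absolute least residues are exactly $1, 2, \dots, \frac{p-1}{2}$ in some order, since $ia\equiv \pm ja \pmod p$ with $1\le i,j\le \frac{p-1}{2}$ forces $i = j$. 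Taking the product of all the multiples in two ways then yields $a^{(p-1)/2}\,(\frac{p-1}{2})! \equiv (-1)^{\mu}\,(\frac{p-1}{2})! \pmod p$, and cancelling the factorial together with Euler's criterion gives $(\frac{a}{p}) = (-1)^{\mu}$.

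Next, specializing to $a = q$ odd, I would convert the count $\mu$ into a floor-function sum. Writing $kq = p\lfloor kq/p\rfloor + s_k$ with $s_k$ the least positive residue and summing over $1\le k\le \frac{p-1}{2}$, a parity analysis that uses the oddness of $q$ and the splitting of the $s_k$ into those below and above $p/2$ shows $\mu \equiv \sum_{k=1}^{(p-1)/2}\lfloor kq/p\rfloor \pmod 2$. Hence $(\frac{q}{p}) = (-1)^{T}$ with $T = \sum_{k=1}^{(p-1)/2}\lfloor kq/p\rfloor$, and symmetrically $(\frac{p}{q}) = (-1)^{T'}$ with $T' = \sum_{l=1}^{(q-1)/2}\lfloor lp/q\rfloor$.

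Finally I would close the argument geometrically. Counting the lattice points $(x,y)$ with $1\le x\le \frac{p-1}{2}$ and $1\le y\le \frac{q-1}{2}$ gives $\frac{p-1}{2}\cdot\frac{q-1}{2}$ points in total. Since $\gcd(p,q)=1$, the line $py = qx$ passes through none of them, so each point lies strictly below or strictly above the line; the points below are counted by $T = \sum_{x}\lfloor qx/p\rfloor$ and those above by $T' = \sum_{y}\lfloor py/q\rfloor$, giving $T + T' = \frac{p-1}{2}\cdot\frac{q-1}{2}$. Multiplying the two Legendre symbols then yields $(\frac{p}{q})(\frac{q}{p}) = (-1)^{T+T'} = (-1)^{\frac{p-1}{2}\cdot\frac{q-1}{2}}$, as required. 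The step I expect to be the main obstacle is the parity bookkeeping in the second paragraph, relating $\mu$ to $T$ modulo $2$: this is precisely where the oddness of $q$ must be invoked and where sign and off-by-one errors most easily arise.
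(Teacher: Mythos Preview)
Your proposal is a correct and complete outline of the classical Gauss--Eisenstein proof of quadratic reciprocity; the parity bookkeeping you flag as the delicate step is handled exactly as you describe, using the oddness of $q$ to match $\mu$ with $\sum_{k}\lfloor kq/p\rfloor$ modulo $2$. The paper itself, however, does not prove this theorem at all: it simply remarks that there are many proofs in the literature and omits the argument. So your route is not so much different from the paper's as it is a genuine proof where the paper gives none; what you supply is precisely the standard self-contained argument one would expect in an appendix of this kind.
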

 
 There are lots of proof of quadratic reciprocity in the literature, so we omit the proof here.
 
 Now let's apply these facts to determining $(\frac{-3}{p})$ for odd primes p. 
 
 \begin{theorem}\label{reciprocity} Let $p > 3$  be an odd prime. Then 
 
 \noindent a) If $p\equiv 1$ (mod $6$), then  -3 is a quadratic residue mod $p$.
 
 \noindent b) If $p\equiv 5$ (mod $6$), then  -3 is a quadratic nonresidue mod $p$.

 \end{theorem}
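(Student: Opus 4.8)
The plan is to reduce the computation of $\left(\frac{-3}{p}\right)$ to a Legendre symbol that depends only on $p \bmod 3$, combining the multiplicativity from Lemma \ref{legendre}b with the quadratic reciprocity law. First I would factor $\left(\frac{-3}{p}\right) = \left(\frac{-1}{p}\right)\left(\frac{3}{p}\right)$ using Lemma \ref{legendre}b. By Lemma \ref{legendre}a the first factor equals $(-1)^{(p-1)/2}$. For the second factor, since $3 \equiv 3 \pmod 4$ we have $\frac{3-1}{2} = 1$, so the reciprocity law gives $\left(\frac{3}{p}\right)\left(\frac{p}{3}\right) = (-1)^{(p-1)/2}$, and hence $\left(\frac{3}{p}\right) = (-1)^{(p-1)/2}\left(\frac{p}{3}\right)$.

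The key step is then to multiply these two evaluations together: the two factors of $(-1)^{(p-1)/2}$ cancel, leaving the clean identity $\left(\frac{-3}{p}\right) = \left(\frac{p}{3}\right)$. This cancellation is the heart of the argument and the only place where the $\pm 1$ signs must be tracked carefully; everything afterward is elementary.

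Finally I would evaluate $\left(\frac{p}{3}\right)$ by reducing $p$ modulo $3$. The nonzero squares mod $3$ are exactly $\{1\}$, so $\left(\frac{p}{3}\right) = 1$ when $p \equiv 1 \pmod 3$ and $\left(\frac{p}{3}\right) = -1$ when $p \equiv 2 \pmod 3$. Since $p > 3$ is prime it is coprime to $6$, so $p \equiv 1$ or $5 \pmod 6$; these reduce to $p \equiv 1$ and $p \equiv 2 \pmod 3$ respectively. Thus $p \equiv 1 \pmod 6$ yields $\left(\frac{-3}{p}\right) = 1$, giving part a), while $p \equiv 5 \pmod 6$ yields $\left(\frac{-3}{p}\right) = -1$, giving part b).

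I do not anticipate a genuine obstacle: the computation is routine once the sign cancellation is observed. The only point requiring mild care is handling the exponent in the reciprocity law for the prime $3$, namely verifying that $\frac{3-1}{2} = 1$ so that the exponent collapses to $\frac{p-1}{2}$ and matches the factor coming from $\left(\frac{-1}{p}\right)$.
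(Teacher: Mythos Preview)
Your proof is correct and uses the same ingredients as the paper's---multiplicativity of the Legendre symbol, the evaluation of $\left(\frac{-1}{p}\right)$, and quadratic reciprocity applied to the pair $(3,p)$. The only difference is in execution: the paper carries all three factors $(-1)^{(p-1)/2}\left(\frac{-1}{p}\right)\left(\frac{p}{3}\right)$ through a four-way case split on $p \bmod 12$, whereas you observe up front that the first two factors are both $(-1)^{(p-1)/2}$ and hence cancel, reducing directly to $\left(\frac{-3}{p}\right) = \left(\frac{p}{3}\right)$ and a two-way split on $p \bmod 3$; your version is cleaner but the underlying argument is the same.
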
 
 
 \begin{proof}  By the lemma above we have $(\frac{-3}{p}) = (\frac{-1}{p})(\frac{3}{p})$, while by quadratic reciprocity we have 
 $(\frac{3}{p}) = (\frac{p}{3})(-1)^{\frac{p-1}{2}}$.  Thus 
 $$(\frac{-3}{p}) = (-1)^{\frac{p-1}{2}}(\frac{-1}{p})(\frac{p}{3}).$$ 
 
 The factors on the right depend on the residue classes 
 of $p$ mod $4$ and $p$ mod $3$.  Thus we consider the four cases defined by the combinations of these two possibilities, obtaining results that 
 initially depend on the residue class of $p$ mod $12$.
 
 \smallskip
 
\noindent \underline{\bf{case 1}}: $p\equiv 1($mod $4)$ and  $p\equiv 1($mod $3)$; equivalently $p\equiv 1($mod $12)$. 
 
 Now $p\equiv 1($mod $3)$ says that $(\frac{p}{3}) = 1$.  
 Also $p\equiv 1($mod $4)$ implies  $(-1)^{\frac{p-1}{2}} = 1$ and by Lemma \ref{legendre} also implies $(\frac{-1}{p}) = 1$.  So by the formula above we have $(\frac{-3}{p}) = 1$, showing that 
 $-3$ is a quadratic residue when $p\equiv 1($mod $12)$.
 
 \smallskip
 
\noindent \underline{\bf{case 2}}: $p\equiv 1($mod $4)$ and  $p\equiv 2($mod $3)$; equivalently $p\equiv 5($mod $12)$. 
  
  Now $p\equiv 2($mod $3)$ says that $(\frac{p}{3}) = -1$.  Also $p\equiv 1($mod $4)$ implies  $(-1)^{\frac{p-1}{2}} = 1$ and 
  also Lemma \ref{legendre} implies $(\frac{-1}{p}) = 1$.  
So by the formula above we have $(\frac{-3}{p}) = -1$, showing that 
 $-3$ is a quadratic nonresidue when $p\equiv 5($mod $12)$. 
 
 \smallskip
 
\noindent \underline{\bf{case 3}}: $p\equiv 3($mod $4)$ and  $p\equiv 1($mod $3)$; equivalently $p\equiv 7($mod $12)$.

Since $p\equiv 1($mod $3)$ we have $(\frac{p}{3}) = 1$.  Also $p\equiv 3($mod $4)$ implies  $(-1)^{\frac{p-1}{2}} = -1$, and also Lemma \ref{legendre} implies $(\frac{-1}{p}) = -1$.  
So by the formula above 
we have $(\frac{-3}{p}) = 1$, showing that $-3$ is a quadratic residue when $p\equiv 7($mod $12)$. 

\smallskip

\noindent \underline{\bf{case 4}}: $p\equiv 3($mod $4)$ and  $p\equiv 2($mod $3)$; equivalently $p\equiv 11($mod $12)$.

Since $p\equiv 2($mod $3)$ we have $(\frac{p}{3}) = -1$.  Again $p\equiv 3($mod $4)$ implies that $(-1)^{\frac{p-1}{2}} = -1$, and also that $(\frac{-1}{p}) = -1$.  So by the formula above 
we get $(\frac{-3}{p}) = -1$, showing that $-3$ is a quadratic nonresidue when $p\equiv 11($mod $12)$.

\medskip

Putting together cases 1 and 3, we see that $-3$ is a quadratic residue mod $p$ when $p\equiv 1($mod $6)$, while cases 2 and 4 show that $-3$ is a quadratic nonresidue mod $p$ when $p\equiv 5($mod $6)$, 
as required.

 \end{proof} 
 
 \begin{corollary}\label{minusthree}  Consider the prime power $q = p^{m}$, where $p > 3$ is an odd prime.  If $q\equiv 1($mod $3)$, 
 then $-3$ is a square in the finite field $GF(q)$.

 \end{corollary}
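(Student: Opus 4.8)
The plan is to reduce the claim to the behavior of $-3$ in the prime field $GF(p)$, supplied by Theorem \ref{reciprocity}, and then to climb from $GF(p)$ up to $GF(q)=GF(p^{m})$ using subfield containments. First I would unpack the hypothesis $q\equiv 1\pmod 3$. Since $p>3$ is prime, $p\equiv 1$ or $p\equiv 2\pmod 3$, and because $p^{m}\equiv(\pm1)^{m}\pmod 3$ the condition $p^{m}\equiv 1\pmod 3$ holds precisely when either $p\equiv 1\pmod 3$ (for every $m$) or $p\equiv 2\pmod 3$ with $m$ even. These two alternatives organize the argument.

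In the first case, $p\equiv 1\pmod 3$ together with $p$ odd forces $p\equiv 1\pmod 6$ by the Chinese Remainder Theorem, so Theorem \ref{reciprocity}a gives that $-3$ is a quadratic residue mod $p$; that is, $-3=c^{2}$ for some $c\in GF(p)$. As $GF(p)$ is a subfield of $GF(q)$, the same $c$ shows that $-3$ is a square in $GF(q)$, and this case is finished for all $m$.

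The second case carries the real content, since here $p\equiv 2\pmod 3$ forces $p\equiv 5\pmod 6$, and Theorem \ref{reciprocity}b then says $-3$ is a nonresidue in the base field $GF(p)$: it only becomes a square after extension. Here I would use that $m$ is even, so $GF(p^{2})\subseteq GF(q)$, together with the general fact that every element of $GF(p)^{*}$ is a square in $GF(p^{2})$. That fact follows from the exponent computation $a^{(p^{2}-1)/2}=(a^{p-1})^{(p+1)/2}=1$ for $a\in GF(p)^{*}$, valid because $a^{p-1}=1$ and $(p+1)/2$ is an integer, so $a$ lies in the index-two subgroup of squares of $GF(p^{2})^{*}$. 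Taking $a=-3$ places $\sqrt{-3}$ inside $GF(p^{2})\subseteq GF(q)$, as desired.

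The main obstacle is exactly this second case: one cannot invoke the prime-field result directly, because $-3$ genuinely fails to be a square in $GF(p)$, and the argument must instead exploit that a quadratic extension absorbs all square roots of base-field elements. I note that the entire statement also follows in one stroke from the fact that the discriminant of $t^{2}+t+1$ is $-3$, so $-3$ is a square in $GF(q)$ if and only if that quadratic splits, if and only if the cyclic group $GF(q)^{*}$ of order $q-1$ contains an element of order $3$, if and only if $3\mid q-1$; but the route through Theorem \ref{reciprocity} is the one consistent with the development in the appendix.
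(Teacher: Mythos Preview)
Your proof is correct and follows essentially the same route as the paper: split into cases according to $p\pmod 6$, invoke Theorem~\ref{reciprocity}, and in the case $p\equiv 5\pmod 6$ use that $m$ is even so that $GF(p^{2})\subseteq GF(q)$. The only difference is how you verify $-3$ is a square in $GF(p^{2})$: the paper observes that $x^{2}+3$ is irreducible over $GF(p)$ and hence $GF(p)(\sqrt{-3})\cong GF(p^{2})$, whereas you use the exponent computation $a^{(p^{2}-1)/2}=(a^{p-1})^{(p+1)/2}=1$ to show every element of $GF(p)^{*}$ is a square in $GF(p^{2})$.
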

 
 \begin{proof} Since $p > 3$ is an odd prime we have either $p\equiv 1($mod $6)$ or $p\equiv 5($mod $6)$.  If $p\equiv 1($mod $6)$, then $-3$ is already a 
 square in the prime subfield $GF(p)\subseteq GF(q)$ by Theorem \ref{reciprocity}, so $-3$ is a square in $GF(q)$, as required.  
 
 So suppose $p\equiv 5($mod $6)$.  Consider the quadratic extension $GF(p)(\sqrt{-3})$ of $GF(p)$ obtained by 
 adjoining to $GF(p)$ a root of the irreducible (by Theorem \ref{reciprocity}) polynomial $x^{2} + 3$ over $GF(p)$.  Then $GF(p)(\sqrt{-3}) \cong GF(p^{2})$, and $-3$ is a square in $GF(p^{2})$.
 
Since $q\equiv 1($mod $3)$, then since $p\equiv 5($mod $6)$ we have $p\equiv 2($mod $3)$, so it follows that $m$ must be even.  
We recall the basic fact from finite fields that $GF(p^{r})\subseteq GF(p^{s})$ if and only if $r\vert s$. 
It follows that $GF(p^{2})\subseteq GF(q)$.  Thus since $-3$ is a square in $GF(p^{2})$, then $-3$ is a square in $GF(q)$.              \end{proof}

 \begin{corollary} \label{quadroots} Let $q = p^{m}$ be a prime power, $q\equiv 1($mod $3)$. 
 
 \smallskip
 
 \noindent \textbf{a)}The equation $x^{2} + x + 1 = 0$ has two distinct solutions in $GF(q)$.  If $x_{1}$ is such a root, 
 then $\frac{1}{x_{1}}$ is the other distinct root.
 
 \smallskip
 
 \noindent \textbf{b)}For $q$ odd and distinct $i,j\in GF(q)$, the equation $x^{2} - (i+j)x + ij + (i-j)^{2} = 0$ has 
  two distinct roots in $GF(q)$.
 
 \end{corollary}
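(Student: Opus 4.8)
The plan is to treat the two parts by separate short arguments: a cyclic-group argument for (a), which must remain valid in every characteristic since $q$ may be even, and a discriminant computation for (b), where $q$ is odd.

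For part (a), I would first observe that $q\equiv 1\pmod 3$ forces $p\ne 3$, so $3\ne 0$ in $GF(q)$ and $x^{3}-1$ is separable. Since $3\mid q-1$, the cyclic group $GF(q)^{*}$ of order $q-1$ contains a unique subgroup of order $3$, hence exactly two elements of order $3$. Factoring $x^{3}-1=(x-1)(x^{2}+x+1)$, every element of order $3$ is a root of $x^{3}-1$ other than $1$, and is therefore a root of $x^{2}+x+1$; as this quadratic has at most two roots, the two order-$3$ elements are precisely its roots and they are distinct. Finally, the product of the two roots equals the constant term $1$, so if $x_{1}$ is one root then $1/x_{1}$ is the other, which gives the reciprocal claim.

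For part (b), I would simply compute the discriminant of $x^{2}-(i+j)x+ij+(i-j)^{2}$. Expanding and using $(i+j)^{2}-4ij=(i-j)^{2}$,
\[
\Delta=(i+j)^{2}-4\bigl(ij+(i-j)^{2}\bigr)=(i-j)^{2}-4(i-j)^{2}=-3(i-j)^{2}.
\]
Since $i\ne j$, the factor $(i-j)^{2}$ is a nonzero square in $GF(q)$, and since $q$ is odd with $q\equiv 1\pmod 3$ we have $p>3$, so Corollary \ref{minusthree} gives that $-3$ is a square in $GF(q)$. Hence $\Delta$ is a nonzero square, and because $q$ is odd the usual discriminant criterion yields two distinct roots in $GF(q)$.

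Both arguments are essentially routine; the only point requiring care is that part (a) must hold in characteristic $2$, which is why I would use the structural order-$3$ argument rather than a discriminant (the quantity $-3$ is useless as a distinctness test when $q$ is even). Part (b), by contrast, reduces entirely to the already-established fact (Corollary \ref{minusthree}) that $-3$ is a square, so the main content there is just the simplification of $\Delta$ to $-3(i-j)^{2}$.
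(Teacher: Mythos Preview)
Your proposal is correct. Part (b) is essentially identical to the paper's argument: both compute the discriminant, simplify it to $-3(i-j)^{2}$, and invoke Corollary~\ref{minusthree}.

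Part (a), however, is handled differently. The paper splits into cases on the characteristic: for odd $p$ it applies the quadratic formula and Corollary~\ref{minusthree} to write the roots explicitly as $\tfrac{1}{2}(-1\pm\sqrt{-3})$; for $p=2$ it invokes the trace criterion $Tr(ac/b^{2})=0$ for solvability of quadratics in characteristic~$2$, checking that $Tr(1)=0$ because $m$ is even. The reciprocal claim is then verified by substitution and a separate case analysis ruling out $x_{1}=\pm 1$. Your argument instead observes uniformly that $3\mid q-1$ forces $GF(q)^{*}$ to contain two elements of order~$3$, which must be the roots of $(x^{3}-1)/(x-1)=x^{2}+x+1$, and reads off the reciprocal relation from Vieta's formula for the constant term. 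This is cleaner: it avoids the characteristic split entirely and dispenses with the trace machinery. The paper's approach, on the other hand, produces the explicit expressions $\tfrac{1}{2}(-1\pm\sqrt{-3})$ in the odd case, though these particular formulas are not used elsewhere (the analogous explicit roots in part~(b) are what get quoted later).
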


 \begin{proof}  Consider a), and suppose first that $p$ is odd.  Since the characteristic of the field is odd, we may find the solutions by 
 the standard quadratic formula.  We obtain the solutions $x = \frac{1}{2}[ -1 + \sqrt{-3}\,], \frac{1}{2}[ -1 - \sqrt{-3}\, ]$, where we have used 
 the existence of $\sqrt{-3}$ in $GF(q)$ by Corollary \ref{minusthree}.  These solutions are distinct since $p$ is odd. 
 
 Now suppose $p=2$.  Recall the trace function $Tr_{GF(q)/ GF(2)}(x) = \sum_{i=0}^{m-1} x^{2^{i}}$, defined for any $x\in GF(q)$, which we abbreviate by $Tr(x)$.  
 It can be shown (see \cite{Pom}) that the 
 quadratic equation $ax^{2} + bx + c = 0$, with $a,b,c\in GF(2^{m})$, $a\ne 0$, has two distinct solutions in $GF(2^{m})$ if and only if $b\ne 0$ and $Tr(\frac{ac}{b^{2}}) = 0.$  
 In our case we have $a = b = c = 1$, so $\frac{ac}{b^{2}} = 1$.  Since $p = 2$ and $q\equiv 1($mod $3)$, $m$ must be even.  Thus there are an even number of terms in the sum defining 
 $Tr(x)$, each of them equal to 1.  So since the characteristic is $2$, we get $Tr(\frac{ac}{b^{2}}) = 0$ in our case.  
 It follows that $x^{2} + x + 1 = 0$ has two distinct solutions when $p = 2$, as required.
 
 Observe that if $x_{1}$ is a root of of $x^{2} + x + 1 = 0$, then by direct substitution so is $\frac{1}{x_{1}}$.  To show that $x_{1}$ and $\frac{1}{x_{1}}$ are distinct, assume not.  
 Then $x_{1} = 1$ or $-1$.  If $q$ is even, then  $x_{1}^{2} + x_{1} + 1 = 0$ implies that $1 = 0$ since the characteristic of the field is $2$, a contradiction.  Assume $q$ 
 is odd.  Then if $x_{1} = 1$ we get $1+1+1 = 0$, implying $q\equiv 0($mod $3)$, a contradiction.  If $x_{1} = -1$, then we get $1 = 0$, contradiction.  Thus $x_{1}$ and and $\frac{1}{x_{1}}$ are distinct.

 Next consider b).  Applying the quadratic formula in this field of odd characteristic, we get the two solutions  
$x = \frac{1}{2}[\,i + j \pm \sqrt{(i+j)^{2} - 4(ij + (j-i)^{2})}\,] = \frac{1}{2}[\,i + j \pm \sqrt{-3(i^{2} + j^{2}) + 6ij}\,] = \frac{1}{2}[\,i + j \pm \sqrt{-3(i - j)^{2}}\,] = \frac{1}{2}[\,i + j \pm \sqrt{-3}(i - j)\,].$  
Now since $-3$ is a square in $GF(q)$ for $q\equiv 1($mod $3)$ by Corollary \ref{minusthree}, it follows that the two solutions for $x$ can be written as  
$x_{1} =  \frac{1}{2}[i(1+\sqrt{-3}) + j(1-\sqrt{-3})]$, and $x_{2} =  \frac{1}{2}[i(1-\sqrt{-3}) + j(1+\sqrt{-3})].$ Also these two solutions are distinct since $i\ne j$ and $q$ is odd.  
\end{proof}

\end{document}